\newcommand{\abs}[1]{\left|#1\right|}
\newcommand{\bdry}[1]{\partial #1}
\newcommand{\closure}[1]{\overline{#1}}
\newcommand{\comp}{\circ}
\newcommand{\dint}{\ds{\int}}
\newcommand{\dist}[2]{\text{dist}\, (#1,#2)}
\newcommand{\ds}[1]{\displaystyle #1}
\newcommand{\eps}{\varepsilon}
\newcommand{\Fucik}{Fu\v c\'\i k }
\newcommand{\half}{\frac{1}{2}}
\newcommand{\id}[1][]{id_{\, #1}}
\newcommand{\ip}[3][]{\left(#2,#3\right)_{#1}}
\newcommand{\norm}[2][]{\left\|#2\right\|_{#1}}
\renewcommand{\O}{\text{O}}
\renewcommand{\o}{\text{o}}
\newcommand{\PS}[1]{$(\text{PS})_{#1}$}
\newcommand{\pnorm}[2][]{\if #1'' \left|#2\right|_p \else \left|#2\right|_{#1} \fi}
\newcommand{\restr}[2]{\left.#1\right|_{#2}}
\newcommand{\seq}[1]{\left(#1\right)}
\newcommand{\set}[1]{\left\{#1\right\}}
\newcommand{\sip}[3][]{(#2,#3)_{#1}}
\newcommand{\snorm}[2][]{\|#2\|_{#1}}
\newcommand{\R}{\mathbb R}
\newcommand{\vol}[1]{\left|#1\right|}
\newenvironment{enumroman}{\begin{enumerate}

}{\end{enumerate}}
\newtheorem{lemma}{Lemma}[section]
\newtheorem{proposition}[lemma]{Proposition}
\newtheorem{theorem}[lemma]{Theorem}
\theoremstyle{definition}
\newtheorem{definition}[lemma]{Definition}
\numberwithin{equation}{section}
\title{\bf New linking theorems with applications to critical growth elliptic problems with jumping nonlinearities\thanks{{\em MSC2010:} Primary 58E05, Secondary 35J61, 35B33
\newline \indent\; {\em Key Words and Phrases:} elliptic problems, critical growth, jumping nonlinearities, nontrivial solutions, new linking theorems, nonlinear splittings}}
\author{\bf Kanishka Perera\\
Department of Mathematical Sciences\\
Florida Institute of Technology\\
Melbourne, FL 32901, USA\\
\em kperera@fit.edu\\
[\bigskipamount]
\bf Caterina Sportelli\\
Dipartimento di Matematica\\
Università degli Studi di Bari Aldo Moro\\
Via E. Orabona 4, 70125 Bari, Italy\\
\em caterina.sportelli@uniba.it}
\date{}
\begin{document}

\maketitle

\begin{abstract}
We study critical growth elliptic problems with jumping nonlinearities. Standard linking arguments based on decompositions of $H^1_0(\Omega)$ into eigenspaces of $- \Delta$ cannot be used to obtain nontrivial solutions to such problems. We show that the associated variational functional admits certain linking structures based on splittings of $H^1_0(\Omega)$ into nonlinear submanifolds. In order to capture these linking geometries, we prove several generalizations of the classical linking theorem of Rabinowitz that are not based on linear subspaces. We then use these new linking theorems to obtain nontrivial solutions of our problems. Our abstract results are of independent interest and can be used to obtain nontrivial solutions of other types of problems with jumping nonlinearities as well.
\end{abstract}

\section{Introduction}

The purpose of this paper is to study the existence of nontrivial solutions to critical growth elliptic problems with jumping nonlinearities such as
\begin{equation} \label{1}
\left\{\begin{aligned}
- \Delta u & = bu^+ - au^- + |u|^{2^\ast - 2}\, u && \text{in } \Omega\\[10pt]
u & = 0 && \text{on } \bdry{\Omega},
\end{aligned}\right.
\end{equation}
where $\Omega$ is a bounded domain in $\R^N,\, N \ge 4$, $2^\ast = 2N/(N - 2)$ is the critical Sobolev exponent, $a, b > 0$, and $u^\pm = \max \set{\pm u,0}$ are the positive and negative parts of $u$, respectively. When $a = b = \lambda$, this problem reduces to the well-known Br{\'e}zis-Nirenberg problem
\begin{equation} \label{2}
\left\{\begin{aligned}
- \Delta u & = \lambda u + |u|^{2^\ast - 2}\, u && \text{in } \Omega\\[10pt]
u & = 0 && \text{on } \bdry{\Omega}
\end{aligned}\right.
\end{equation}
(see \cite{MR709644}). It was shown in \cite{MR709644} that problem \eqref{2} has a positive solution when $0 < \lambda < \lambda_1$, where $\lambda_1 > 0$ is the first Dirichlet eigenvalue of $- \Delta$ in $\Omega$. Capozzi et al. \cite{MR831041} have extended this result by proving the existence of a nontrivial solution when $N = 4$ and $\lambda > \lambda_1$ is not an eigenvalue, or $N \ge 5$ and $\lambda \ge \lambda_1$. The proofs of their results are based on a decomposition of $H^1_0(\Omega)$ into eigenspaces of $- \Delta$. In contrast, the asymptotic problem
\begin{equation} \label{3}
\left\{\begin{aligned}
- \Delta u & = bu^+ - au^- && \text{in } \Omega\\[10pt]
u & = 0 && \text{on } \bdry{\Omega}
\end{aligned}\right.
\end{equation}
associated with problem \eqref{1} is nonlinear and therefore its solution set is not a linear subspace of $H^1_0(\Omega)$ when it has nontrivial solutions. Therefore the arguments in \cite{MR831041} cannot be used to obtain nontrivial solutions of problem \eqref{1}. We will show that the variational functional
\[
E(u) = \half \int_\Omega |\nabla u|^2\, dx - \half \int_\Omega \left[a\, (u^-)^2 + b\, (u^+)^2\right] dx - \frac{1}{2^\ast} \int_\Omega |u|^{2^\ast} dx, \quad u \in H^1_0(\Omega)
\]
associated with problem \eqref{1} admits, depending on the location of the point $(a,b)$ in the plane, certain linking structures based on a splitting of $H^1_0(\Omega)$ into nonlinear submanifolds. In order to capture these linking geometries, we will prove several generalizations of the classical linking theorem of Rabinowitz \cite{MR0488128} that are not based on linear subspaces. We will then use these new linking theorems to obtain nontrivial solutions of problem \eqref{1} when $(a,b)$ lies in certain regions of the plane.

To state our linking theorems, let $E$ be a $C^1$-functional on a Banach space $X$. Denote by $H$ the class of homeomorphisms $h$ of $X$ onto itself such that $h$ and $h^{-1}$ map bounded sets into bounded sets. Recall that a mapping $\varphi : Y \to Z$ between linear spaces is positive homogeneous if $\varphi(tu) = t \varphi(u)$ for all $u \in Y$ and $t \ge 0$. Let $X = N \oplus M,\, u = v + w$ be a direct sum decomposition with $N$ finite dimensional and $M$ closed and nontrivial. For $\rho > 0$, let $S_\rho = \set{u \in X : \norm{u} = \rho}$. We have the following theorems.

\begin{theorem} \label{Theorem 4}
Let $\theta \in C(M,N)$ be a positive homogeneous map. Assume that there exist $\rho > 0$ and $e \in X \setminus N$ such that
\begin{equation} \label{12}
\sup_{v \in N}\, E(v) < \inf_{u \in A}\, E(u), \qquad \sup_{u \in Q}\, E(u) < \infty,
\end{equation}
where $A = \set{\theta(w) + w : w \in M \cap S_\rho}$ and $Q = \set{v + se : v \in N,\, s \ge 0}$. Then
\begin{equation} \label{13}
\inf_{u \in A}\, E(u) \le c := \inf_{h \in \widetilde{H}}\, \sup_{u \in h(Q)}\, E(u) \le \sup_{u \in Q}\, E(u),
\end{equation}
where $\widetilde{H} = \set{h \in H : \restr{h}{N} = \id}$. Moreover, if $E$ satisfies the {\em \PS{c}} condition, then $c$ is a critical value of $E$.
\end{theorem}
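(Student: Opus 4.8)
The plan is to derive the two inequalities in \eqref{13} from a purely topological linking property, use that property together with \eqref{12} to see that $c$ is a well-defined real number with $c>\sup_N E$, and then close with a standard deformation argument. First I would show that $h(Q)\cap A\neq\emptyset$ for every $h\in\widetilde H$. Granting this, taking $h=\id$ gives $c\le\sup_Q E<\infty$, while for an arbitrary $h\in\widetilde H$ any point of $h(Q)\cap A$ shows $\sup_{h(Q)}E\ge\inf_A E$, hence $c\ge\inf_A E$; combined with \eqref{12} this yields $\sup_N E<\inf_A E\le c\le\sup_Q E<\infty$, so $c$ is finite (note $\sup_N E\ge E(0)>-\infty$) and lies strictly above $\sup_N E$.

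To prove the linking I would straighten $A$ onto a linear sphere. Let $P\colon X\to M$ be the projection along $N$, and write $e=e_N+e_M$ with $e_N\in N$ and $e_M=Pe\neq 0$ (since $e\notin N$). Define $\Theta\colon X\to X$ by $\Theta(v+w)=(v-\theta(w))+w$ for $v\in N$, $w\in M$. Since $\theta$ is continuous and positive homogeneous (so $\theta(0)=0$), $\Theta$ is a homeomorphism with inverse $v+w\mapsto(v+\theta(w))+w$, it satisfies $\restr{\Theta}{N}=\id$, it carries $A$ onto $M\cap S_\rho$, and, because the $M$-component of $v+se$ equals $se_M$, it carries $Q$ onto itself (indeed $\Theta(v+se)=(v-s\,\theta(e_M))+se$). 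Thus $h(Q)\cap A\neq\emptyset$ is equivalent to $g(Q)\cap(M\cap S_\rho)\neq\emptyset$, where $g:=\Theta\comp h\comp\Theta^{-1}$ is a homeomorphism of $X$ with $\restr{g}{N}=\id$. This is exactly a linking between the sphere $M\cap S_\rho$ and the boundary $N$ of the half-space $Q$ inside $N\oplus\R e$, which I would establish by a Brouwer-degree argument along the lines of Rabinowitz \cite{MR0488128}: on the finite-dimensional boxes $Q_R=\set{v+se:\ v\in N,\ \norm{v}\le R,\ 0\le s\le R}$, where $g$ is the identity on the bottom face, one checks that the test map $u\mapsto\big((I-P)\,g(u),\ \norm{Pg(u)}-\rho\big)$ either vanishes somewhere on $Q_R$ — producing the intersection directly — or is nonzero on $\partial Q_R$ with Brouwer degree $\pm 1$ at the origin for all large $R$, which again forces an interior zero and hence the intersection.

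For the last assertion, assume the \PS{c} condition and suppose $c$ is not a critical value. Since $c>\sup_N E$, I would fix $\eps>0$ with $c-2\eps>\sup_N E$ and apply the quantitative deformation lemma for $C^1$ functionals (built from a pseudo-gradient vector field): it gives $\eta\in C([0,1]\times X,X)$ such that each $\eta(t,\cdot)$ is a homeomorphism of $X$ displacing points a uniformly bounded amount — hence $\eta(t,\cdot)$ and its inverse lie in $H$ — with $\eta(t,\cdot)=\id$ off $E^{-1}([c-2\eps,c+2\eps])$ and $\eta(1,\set{E\le c+\eps})\subseteq\set{E\le c-\eps}$. Because $c-2\eps>\sup_N E$, the map $\eta(1,\cdot)$ is the identity on $N$, so $\eta(1,\cdot)\in\widetilde H$. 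Picking $h\in\widetilde H$ with $\sup_{h(Q)}E<c+\eps$, the composite $\eta(1,\cdot)\comp h$ again lies in $\widetilde H$ (which is closed under composition) and satisfies $\sup_{(\eta(1,\cdot)\comp h)(Q)}E\le c-\eps<c$, contradicting the definition of $c$; therefore $c$ is a critical value.

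The main obstacle is the degree computation in the linking step, and in particular coping with the \emph{unboundedness} of $Q$: one has to rule out that $g$ (equivalently $h$) breaks the linking along the ``far'' faces of $Q_R$ as $R\to\infty$, which is exactly where the restriction to the class $H$ and the hypothesis $\sup_Q E<\infty$ enter. Everything else — the bookkeeping in \eqref{13} and the deformation argument — is routine.
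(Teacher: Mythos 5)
Your overall architecture matches the paper's: derive the two inequalities in \eqref{13} from the intersection property $h(Q)\cap A\neq\emptyset$ for every $h\in\widetilde H$, observe $\sup_N E < \inf_A E \le c$, and close with the first deformation lemma, taking $\eps$ so small that $c-2\eps>\sup_N E$ forces $\eta$ to fix $N$ and hence lie in $\widetilde H$; your conjugation by $\Theta(v+w)=(v-\theta(w))+w$ is exactly the inverse of the homeomorphism $h:v+w\mapsto v+\theta(w)+w$ the paper uses, and the verification that $\Theta$ fixes $N$, maps $A$ onto $M\cap S_\rho$, and maps $Q$ onto itself is correct. The gap is entirely in the intersection step. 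You reduce to showing $g(Q)\cap(M\cap S_\rho)\neq\emptyset$ for $g\in H$ with $g|_N=\id$, and you propose a Brouwer-degree computation on the truncated boxes $Q_R$. You correctly get that the test map is nonvanishing on $\partial Q_R$ for large $R$ (since $g^{-1}$ maps bounded sets to bounded sets, so $g^{-1}(\overline{B_\rho})$ is contained in some $B_{R_0}$; and $(I-P)g(u)=0$ forces $\|Pg(u)\|=\|g(u)\|>\rho$). But nonvanishing on $\partial Q_R$ only makes the degree well defined; it does not make it nonzero, and you give no way to compute it. The obvious linear homotopy $g_t=(1-t)\,\id+tg$ does not stay admissible on the lateral and top faces, because $(1-t)u+tg(u)$ can re-enter $B_\rho\cap M$ even when $u$ and $g(u)$ are both far out. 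Moreover, your stated expectation that the hypothesis $\sup_Q E<\infty$ enters here is a misconception: that hypothesis is purely to guarantee $c<\infty$ and plays no role in the topological linking.

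The paper sidesteps this degree computation entirely by working inside Schechter's linking framework (the class $\Phi$ of deformations that shrink $X$ to a point). It quotes two facts from Schechter: $M\cap S_\rho$ links $N$ in this sense (Corollary 2.7), and linking is preserved under $H$-homeomorphisms (Proposition 2.5). It then proves a slick cone lemma (Lemma \ref{Lemma 2}): because $N$ is a cone and $\pm e\notin N$, the straight-line deformation $\Gamma(u,t)=(1-t)u-te$, which lies in $\Phi$, never pushes a point of $X\setminus Q$ onto $N$ — if $(1-t)u-te\in N$ with $t<1$, then $u=(1-t)^{-1}(\text{cone element})+\tfrac{t}{1-t}e\in Q$ — so no subset of $X\setminus Q$ can link $N$. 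If some $h\in\widetilde H$ had $h(Q)\cap A=\emptyset$, then $h^{-1}(A)\subset X\setminus Q$ would still link $h^{-1}(N)=N$ by Proposition 2.5, a contradiction. This is the device you are missing; without it (or an actual degree computation that handles the unbounded faces — which in infinite dimensions needs the ``cap at infinity'' provided by property $(iii)$ of $\Phi$), the intersection lemma is unproved. If you replace the degree sketch with a citation of Schechter's Proposition 2.5 and Corollary 2.7 together with the cone lemma above, your proof becomes complete and is essentially the paper's.
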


\begin{theorem} \label{Theorem 5}
Let $\tau \in C(N,M)$ be a positive homogeneous map and let $B = \{v + \tau(v) : v \in N\}$. Assume that there exist $\rho > 0$ and $e \in X \setminus B$ with $-e \notin B$ such that
\begin{equation} \label{14}
\sup_{u \in B}\, E(u) < \inf_{w \in A}\, E(w), \qquad \sup_{u \in Q}\, E(u) < \infty,
\end{equation}
where $A = M \cap S_\rho$ and $Q = \set{u + se : u \in B,\, s \ge 0}$. Then
\begin{equation} \label{15}
\inf_{w \in A}\, E(w) \le c := \inf_{h \in \widetilde{H}}\, \sup_{u \in h(Q)}\, E(u) \le \sup_{u \in Q}\, E(u),
\end{equation}
where $\widetilde{H} = \set{h \in H : \restr{h}{B} = \id}$. Moreover, if $E$ satisfies the {\em \PS{c}} condition, then $c$ is a critical value of $E$.
\end{theorem}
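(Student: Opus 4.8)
The plan is to deduce the theorem from the single geometric \emph{intersection property}
\begin{equation} \label{linkintersect}
h(Q) \cap A \ne \emptyset \qquad \text{for every } h \in \widetilde{H},
\end{equation}
and then run the standard deformation argument. Granting \eqref{linkintersect}, \eqref{15} is immediate: $\id \in \widetilde{H}$ gives $c \le \sup_Q E$, while choosing for each $h \in \widetilde{H}$ a point of $h(Q) \cap A$ shows $\sup_{h(Q)} E \ge \inf_A E$, hence $c \ge \inf_A E$. For the last assertion, suppose $c$ is a regular value and $E$ satisfies \PS{c}, and fix $\eps > 0$ with $\sup_B E < c - 2\eps$ (possible since $\sup_B E < \inf_A E \le c$). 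The first deformation lemma then yields a homeomorphism $\eta$ of $X$ with $\eta(E^{c+\eps}) \subseteq E^{c-\eps}$ and $\eta = \id$ on $\set{u \in X : E(u) < c - 2\eps}$, where $E^a := \set{u \in X : E(u) \le a}$; since $\eta$ moves each point a uniformly bounded distance (being a bounded-time flow of a bounded vector field), $\eta$ and $\eta^{-1}$ preserve boundedness, and $B \subseteq \set{E < c-2\eps}$ forces $\restr{\eta}{B} = \id$, so $\eta \in \widetilde{H}$ and $\eta \comp h \in \widetilde{H}$ for all $h \in \widetilde{H}$. Picking $h$ with $\sup_{h(Q)} E < c + \eps$ then gives $\sup_{(\eta \comp h)(Q)} E \le c - \eps$, contradicting the definition of $c$; hence $c$ is a critical value.

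The heart of the proof is \eqref{linkintersect}, and this is where I expect the main obstacle to be. Fix $h \in \widetilde{H}$, let $P : X \to N$ be the bounded linear projection with kernel $M$, and parametrize $Q$ by $\phi(v,s) := v + \tau(v) + se$, $(v,s) \in N \times [0,\infty)$. Since $h(\phi(v,s)) \in A = M \cap S_\rho$ exactly when $P h(\phi(v,s)) = 0$ and $\norm{h(\phi(v,s))} = \rho$ both hold, it suffices to find a zero of the map $\Gamma(v,s) := \bigl(P h(\phi(v,s)),\, \norm{h(\phi(v,s))} - \rho\bigr)$, which takes values in the finite-dimensional space $N \times \R$ of dimension $\dim N + 1$. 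The crucial preliminary is a \emph{blow-up estimate}: there is $R_0 > 0$ with $\norm{h(\phi(v,s))} > \rho$ whenever $\max\set{\norm{v},s} \ge R_0$. Were this to fail along $(v_n,s_n)$ with $\max\set{\norm{v_n},s_n} \to \infty$ and $\norm{h(\phi(v_n,s_n))} \le \rho$, then $\set{\phi(v_n,s_n)}$ would be bounded (as $h^{-1}$ preserves boundedness); applying $P$ forces $s_n \to \infty$ and $v_n/s_n \to -Pe$, and positive homogeneity plus continuity of $\tau$ give $\tau(v_n)/s_n = \tau(v_n/s_n) \to \tau(-Pe)$, so boundedness of $(I-P)\phi(v_n,s_n) = \tau(v_n) + s_n(I-P)e$ forces $\tau(-Pe) = -(I-P)e$, i.e.\ $-e = -Pe + \tau(-Pe) \in B$, contradicting $-e \notin B$ (and note $e \ne 0$, since $0 \in B \not\ni e$).

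Now enlarge $R_0$ so that $\rho/2 < R_0$ and set $D := \set{(v,s) \in N \times \R : \norm{v} \le R_0,\ 0 \le s \le R_0}$, a compact neighbourhood of the origin. On $\partial D$ the map $\Gamma$ is nonzero: on the lateral and top faces the blow-up estimate makes its second component positive, and on the bottom face $\set{s=0}$ one has $h(\phi(v,0)) = \phi(v,0) = v + \tau(v) \in B$, so $\Gamma(v,0) = \bigl(v,\, \norm{v+\tau(v)}-\rho\bigr) \ne 0$. Thus the Brouwer degree $\deg(\Gamma,\mathrm{int}\,D,0)$ is defined, and I would evaluate it by two homotopies, each avoiding $0$ on $\partial D$ by exactly these two observations: $(v,s) \mapsto \bigl(P h(\phi(v,(1-t)s)),\, \norm{h(\phi(v,s))}-\rho\bigr)$ deforms $\Gamma$ to $(v,s) \mapsto \bigl(v,\, \norm{h(\phi(v,s))}-\rho\bigr)$, and then $(v,s) \mapsto \bigl(v,\, \norm{h(\phi((1-t)v,s))}-\rho\bigr)$ deforms that to the product map $(v,s) \mapsto \bigl(v,\, \zeta(s)\bigr)$ with $\zeta(s) := \norm{h(se)}-\rho$. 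Multiplicativity of the degree then gives $\deg(\Gamma,\mathrm{int}\,D,0) = 1 \cdot \deg(\zeta,(0,R_0),0)$, and since $\zeta(0) = -\rho < 0$ while $\zeta(R_0) = \norm{h(R_0 e)}-\rho > 0$ by the blow-up estimate, this equals $1 \ne 0$. Hence $\Gamma$ vanishes in $\mathrm{int}\,D$, which proves \eqref{linkintersect}. The delicate step is thus the intersection property, and inside it the blow-up estimate: it replaces, relative to the classical linking theorem, the requirement that $h$ be the identity on the outer faces of a \emph{bounded} $Q$, and it is precisely here that the hypotheses that $h$ and $h^{-1}$ preserve boundedness and that $\pm e \notin B$ are used.
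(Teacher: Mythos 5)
Your proof is correct, but it takes a genuinely different route for the crucial intersection property $h(Q)\cap A\ne\emptyset$ for all $h\in\widetilde H$. The paper isolates a general statement (Theorem \ref{Theorem 6}) and reduces Theorem \ref{Theorem 5} to it by a change of variables: since $\tau$ is positive homogeneous, $B$ is a cone; the homeomorphism $v+w\mapsto v+\tau(v)+w$ belongs to $H$ and maps $M\cap S_\rho$ to itself and $N$ to $B$, so $A$ links $B$ in Schechter's sense by Propositions \ref{Proposition 1} and \ref{Proposition 2}; then Lemma \ref{Lemma 2} (using the specific contraction $\Gamma(u,t)=(1-t)u-te$ from the class $\Phi$) shows no subset of $X\setminus Q$ links $B$, and Lemma \ref{Lemma 3} converts this into the intersection property. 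You instead prove the intersection property directly by a Brouwer-degree computation in $N\times\R$: you parametrize $Q$ by $\phi(v,s)=v+\tau(v)+se$, derive a blow-up estimate from the fact that $h^{-1}$ maps bounded sets to bounded sets together with $-e\notin B$, use $\restr{h}{B}=\id$ to control the bottom face, and evaluate the degree by two admissible homotopies and the product formula. This is a legitimate, self-contained alternative that bypasses Schechter's abstract ``links'' machinery entirely; the blow-up estimate plays exactly the structural role of the paper's Lemma \ref{Lemma 2}, using the same two hypotheses ($-e\notin B$ and boundedness-preservation). What the paper's route buys is modularity --- Theorem \ref{Theorem 6} is reused for Theorems \ref{Theorem 4} and \ref{Theorem 3} --- and brevity once Schechter's results are quoted; what your route buys is a proof that does not presuppose the notion of linking or Schechter's invariance and intersection propositions. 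One small gloss: your justification that the deformation $\eta$ preserves boundedness (``bounded-time flow of a bounded vector field'') is informal; the paper avoids this by invoking \cite[Lemma 1.3.5]{MR3012848}, which already delivers $\eta\in H$.
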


We will prove Theorems \ref{Theorem 4} and \ref{Theorem 5}, as well as a generalization of Theorem \ref{Theorem 4} (see Theorem \ref{Theorem 3}), in Section \ref{Linking}.

To state our results for problem \eqref{1}, recall that the set $\Sigma(- \Delta)$ consisting of points $(a,b) \in \R^2$ for which problem \eqref{3} has a nontrivial solution is called the Dancer-\Fucik spectrum of $- \Delta$ in $\Omega$. Dancer \cite{MR499709, MR628124} and \Fucik \cite{MR0447688} recognized its significance for the solvability of the problem
\[
\left\{\begin{aligned}
- \Delta u & = bu^+ - au^- + f(u) && \text{in } \Omega\\[10pt]
u & = 0 && \text{on } \bdry{\Omega}
\end{aligned}\right.
\]
for a continuous function $f$ satisfying $f(t)/t \to 0$ as $|t| \to \infty$. Denoting by $\seq{\lambda_l}$ the sequence of Dirichlet eigenvalues of $- \Delta$ in $\Omega$, $\Sigma(- \Delta)$ contains the points $(\lambda_l,\lambda_l)$ since problem \eqref{3} reduces to the Dirichlet eigenvalue problem for $- \Delta$ when $a = b$. When $N = 1$, \Fucik showed in \cite{MR0447688} that $\Sigma(- d^2/dx^2)$ consists of a sequence of hyperbolic-like curves passing through the points $(\lambda_l,\lambda_l)$, with one or two curves going through each point. When $N \ge 2$, $\Sigma(- \Delta)$ consists locally of curves emanating from the points $(\lambda_l,\lambda_l)$ (see \cite{MR1011156, MR1181350, MR1269657, MR658734, MR871108, MR965532, MR1077275, MR1484910, MR640779, MR1322614}). In particular, Schechter showed in \cite{MR1322614} that in the square
\[
Q_l = (\lambda_{l-1},\lambda_{l+1}) \times (\lambda_{l-1},\lambda_{l+1}),
\]
$\Sigma(- \Delta)$ contains two strictly decreasing curves
\[
C_l : b = \nu_{l-1}(a), \qquad C^l : b = \mu_l(a),
\]
with
\[
\nu_{l-1}(a) \le \mu_l(a), \qquad \nu_{l-1}(\lambda_l) = \lambda_l = \mu_l(\lambda_l),
\]
such that the points in $Q_l$ that are either below the lower curve $C_l$ or above the upper curve $C^l$ are not in $\Sigma(- \Delta)$, while the points between them may or may not belong to $\Sigma(- \Delta)$ when they do not coincide. We will recall the construction of these curves in an abstract setting in Section \ref{DF spectrum}. We have the following results.

\begin{theorem} \label{Theorem 1}
Let $N \ge 4$. If $(a,b) \in Q_l$ and
\[
b < \nu_{l-1}(a)
\]
for some $l \ge 2$, then problem \eqref{1} has a nontrivial solution.
\end{theorem}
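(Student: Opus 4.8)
The plan is to apply Theorem \ref{Theorem 4} to the functional $E$ with a suitable nonlinear splitting of $H^1_0(\Omega)$ adapted to the region $b < \nu_{l-1}(a)$. First I would recall, from the abstract construction of the curves $C_l$ in Section \ref{DF spectrum}, that the curve $b = \nu_{l-1}(a)$ arises from a minimax characterization over a class of sets linking (in a suitable sense) the eigenspace $N := \bigoplus_{j \le l-1} \ker(-\Delta - \lambda_j)$; concretely, associated with this construction there is a positive homogeneous map $\theta \in C(M, N)$ — where $M$ is the orthogonal complement of $N$ in $H^1_0(\Omega)$ — whose graph-type set $\set{\theta(w) + w : w \in M}$ captures the part of the space on which the quadratic form $Q_0(u) := \int_\Omega |\nabla u|^2 - a(u^-)^2 - b(u^+)^2$ is positive definite away from $0$, while $Q_0$ is negative semidefinite on $N$. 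The condition $b < \nu_{l-1}(a)$ is precisely what makes these two estimates strict, so that $\sup_{v \in N} Q_0(v) \le 0 < \inf \set{Q_0(u) : u \in A}$ for $A = \set{\theta(w) + w : w \in M \cap S_\rho}$ after rescaling.

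Next I would transfer these quadratic-form estimates to $E$ itself. On $N$, since $Q_0 \le 0$ and the critical term $-\frac{1}{2^\ast}\int_\Omega |u|^{2^\ast}$ is $\le 0$, we get $E \le 0$ on $N$, so $\sup_{v \in N} E(v) = 0$. On the set $A$: because $Q_0$ is bounded below by a positive constant times $\norm{u}^2$ on the cone over $\set{\theta(w)+w}$, and because on $A$ we have $\norm{u} \asymp \rho$, while the critical term is $\O(\rho^{2^\ast}) = \o(\rho^2)$, for $\rho$ small enough $\inf_{u \in A} E(u) > 0 = \sup_{v \in N} E(v)$; this gives the first inequality in \eqref{12}. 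For the second inequality, I would pick $e$ to be (a truncated/translated Talenti instanton concentrated at a point of $\Omega$) so that on $Q = \set{v + se : v \in N,\, s \ge 0}$ the functional $E$ is bounded above: the negative definiteness on $N$, the negativity of the critical term, and the standard fact that $\frac{s^2}{2}\norm{\nabla e}^2 - \frac{s^{2^\ast}}{2^\ast}\pnorm{e}_{2^\ast}^{2^\ast} \to -\infty$ as $s \to \infty$ together force $\sup_Q E < \infty$, in fact $\sup_Q E < S^{N/2}/N$ — the threshold below which \PS{c} holds. This last point, getting the strict inequality $\sup_Q E < S^{N/2}/N$, is where $N \ge 4$ is used, via the classical Br{\'e}zis–Nirenberg estimates on the energy of the instanton interacting with the lower-order (now indefinite) quadratic terms; this is the delicate computation and the main obstacle.

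Having verified \eqref{12}, Theorem \ref{Theorem 4} yields a critical level $c$ with $0 < \inf_A E \le c \le \sup_Q E < S^{N/2}/N$, provided $E$ satisfies \PS{c}. So the remaining step is the compactness: I would show $E$ satisfies \PS{c} for every $c < S^{N/2}/N$. This is the standard concentration–compactness / Br{\'e}zis–Nirenberg argument: a \PS{c} sequence is bounded (using $c$ finite and the subcriticality of the quadratic part relative to $\norm{u}^2$, together with the Ambrosetti–Rabinowitz-type manipulation $E(u_n) - \frac{1}{2^\ast}\langle E'(u_n), u_n\rangle$), hence has a weakly convergent subsequence; the possible loss of compactness is quantified in bubbles each carrying energy at least $S^{N/2}/N$, so $c < S^{N/2}/N$ forces strong convergence and $c$ is attained. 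Since $c > 0 = E(0)$, the resulting critical point is nontrivial, proving the theorem. I would remark that the only genuinely new ingredient over \cite{MR831041} is the use of the nonlinear set $A$ (the graph of $\theta$) in place of a linear subspace, which is exactly what Theorem \ref{Theorem 4} was designed to accommodate; everything else is an adaptation of well-known estimates.
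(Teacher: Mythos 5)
Your overall strategy — Theorem \ref{Theorem 4} with the graph of $\theta$ as the upper linking set, a truncated Talenti bubble as $e$, and compactness below $\frac{1}{N}S_N^{N/2}$ — is the paper's strategy when $N\ge 5$. But the statement covers $N\ge 4$, and the $N=4$ case cannot be done by Theorem \ref{Theorem 4} as you propose; this is a genuine gap, not a technicality.

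The obstruction is exactly the ``delicate computation'' you flag but do not carry out. When $N=4$ the bubble satisfies $\int_\Omega u_{\eps,\mu}^2\,dx \gtrsim \eps^2|\log(\mu\eps)|$ (rather than $\gtrsim \eps^2$ for $N\ge 5$), and the cross terms between $v\in N_{l-1}$ and $u_{\eps,\mu}$ in $\int_\Omega |tv+su_{\eps,\mu}|^{2^*}dx$ and in the quadratic part can no longer be absorbed by Young's inequality against the gain from $\int u_{\eps,\mu}^2$; the bound $\sup_Q E<\frac14 S_4^2$ fails for $Q=\{v+su_{\eps,\mu}: v\in N_{l-1},\ s\ge 0\}$. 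The paper resolves this by replacing $N_{l-1}$ with $T(N_{l-1})$, where $Tv=v_\mu$ is $v$ cut off to vanish on the ball $B_{3/(4\mu)}(x_0)$; then $v_\mu$ and $u_{\eps,\mu}$ have disjoint supports, so $E(tv_\mu+su_{\eps,\mu})=E(tv_\mu)+E(su_{\eps,\mu})$ and the cross terms disappear entirely. Since $T(N_{l-1})$ is a linear subspace but not $N_{l-1}$ itself, Theorem \ref{Theorem 4} no longer applies and one needs the more general Theorem \ref{Theorem 3} (linking of $A$ with $T(N)$ for $T$ close to the identity), together with Lemma \ref{Lemma 4} showing $X=T(N)\oplus M$ and Lemma \ref{Lemma 7} showing $\|I-T\|\to 0$ as $\mu\to\infty$. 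None of this appears in your proposal, and without it the case $N=4$ is unproved. Your claim that ``this is where $N\ge 4$ is used, via the classical Br\'ezis--Nirenberg estimates'' is misleading: the classical estimates alone do not close the $N=4$ case here precisely because the lower-order term is indefinite and supported by eigenfunctions that overlap the bubble.

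A secondary gap: your positivity claim on $A$ needs more care. From $b<\nu_{l-1}(a)$ one only gets $n_{l-1}(a,b)\ge 0$, i.e.\ $I(\theta(w,a,b)+w,a,b)\ge 0$ on $M_{l-1}\cap S$, which is not enough to beat the critical term by a margin of order $\rho^2$. The paper instead uses the continuity of $\nu_{l-1}$ to pass to the shifted parameters $(a/(1-\delta),b/(1-\delta))$ and works with $\theta(\cdot,a/(1-\delta),b/(1-\delta))$, which produces the explicit estimate $I(u,a,b)\ge\delta\|u\|_D^2$ on the graph, and hence $\inf_A E\ge\frac{\delta}{2}\rho^2+\o(\rho^2)>0$. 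Your phrase ``after rescaling'' gestures at something like this but does not supply it. Finally, you should also note (as the paper does) that $x_0$ must be chosen so that $e\notin N_{l-1}$ (respectively $T(N_{l-1})$), which uses that different centers give bubbles with disjoint supports and $N_{l-1}$ is finite dimensional.
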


\begin{theorem} \label{Theorem 2}
Let $N \ge 5$. If $(a,b) \in Q_l$ and
\[
b \ge \mu_l(a)
\]
for some $l \ge 2$, then problem \eqref{1} has a nontrivial solution.
\end{theorem}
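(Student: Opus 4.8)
The plan is to apply Theorem \ref{Theorem 5} to the functional $E$ on $X = H^1_0(\Omega)$, choosing the splitting $X = N \oplus M$ and the map $\tau$ so that the linking sets $B = \set{v + \tau(v) : v \in N}$ and $A = M \cap S_\rho$ reflect the Dancer--\Fucik geometry of the asymptotic problem \eqref{3} at a point on or above the upper curve $C^l$. Recall (from the abstract construction to be given in Section \ref{DF spectrum}) that the curve $C^l : b = \mu_l(a)$ is obtained from a minimax over a suitable class of sets; the natural candidate for $B$ is (a homeomorphic image of) the sublevel structure associated with $\mu_l$, so that on $B$ the quadratic-plus-lower-order part of $E$ is controlled, while $A$ lives in the complementary manifold where the relevant Rayleigh-type quotient exceeds $b$. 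Concretely, I would take $N$ to be the span of the first $l$ eigenfunctions of $-\Delta$ (finite dimensional), $M$ its orthogonal complement, and define $\tau$ via the optimization that cuts out $C^l$, exactly as in Schechter's construction; the positive homogeneity of $\tau$ comes from the $0$-homogeneity of the asymptotic nonlinearity $bu^+ - au^-$.

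The key steps, in order, are: (1) record the abstract construction of $\nu_{l-1}, \mu_l$ and the sets realizing them, and verify that the associated $\tau \in C(N,M)$ is positive homogeneous and that $B$, $A = M\cap S_\rho$ are as in Theorem \ref{Theorem 5}; (2) verify the linking inequalities \eqref{14}: the bound $\sup_{u\in B} E(u) < \inf_{w\in A} E(w)$ for suitable small $\rho$, using that on $B$ the asymptotic part of $E$ is nonpositive (so $E \le \frac{1}{2}\norm{u}^2 - \frac{1}{2^\ast}\pnorm{u}_{2^\ast}^{2^\ast}$ type control gives an upper bound), while on the sphere $M\cap S_\rho$ of small radius the quadratic form $\frac12(\norm{w}^2 - \int[a(w^-)^2 + b(w^+)^2])$ is positive definite because $b < \lambda_{l+1}$ and the complementary Rayleigh quotient on $M$ dominates $b$ — here $b \ge \mu_l(a)$ is exactly what is needed to push $B$ below $A$; (3) choose the direction $e$ by a Brezis--Nirenberg-style concentration: take $e$ to be (a truncation of) the Aubin--Talenti instanton $U_\eps$ concentrated at an interior point, and estimate $\sup_{u\in Q} E(u)$ along $Q = \set{u + se : u\in B, s\ge 0}$, showing it is finite and, crucially, strictly below the first level $S^{N/2}/N$ of the Sobolev constant $S$ at which compactness can fail; (4) conclude that $E$ satisfies \PS{c} for the resulting $c \in (\inf_A E,\ S^{N/2}/N)$, and invoke Theorem \ref{Theorem 5} to get a critical point $u$ with $E(u) = c > 0$, hence $u \ne 0$, a nontrivial solution of \eqref{1}.

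The main obstacle is step (3) together with the \PS{} analysis of step (4): one must show that the minimax level stays strictly below the compactness threshold $\frac{1}{N}S^{N/2}$, and this requires delicate Brezis--Nirenberg asymptotic estimates for $E(u + sU_\eps)$ that are uniform over $u$ ranging in the noncompact set $B$ and $s\ge 0$ — the interaction between the concentrating bubble $U_\eps$ and the (nonlinear) manifold $B$ must be estimated carefully, and it is precisely here that the hypothesis $N \ge 5$ enters (the favorable $\eps$-expansion term $\eps^{(N-2)/2}$ beats the error $\O(\eps^{N-2})$ only for $N \ge 5$, which is why Theorem \ref{Theorem 1} needs only $N\ge 4$ but Theorem \ref{Theorem 2}, sitting above the upper \Fucik curve where more modes contribute, requires $N\ge 5$). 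A secondary technical point is checking the geometric hypotheses $e \notin B$ and $-e \notin B$, which should follow from the sign/concentration properties of $U_\eps$ versus the eigenfunction content of $B$.
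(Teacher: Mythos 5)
Your proposal takes essentially the same route as the paper's proof: apply Theorem \ref{Theorem 5} (through the abstract Theorem \ref{Theorem 8}) with $N = N_l$, $M = M_l$, $\tau = \tau(\cdot,a,b)$ from Proposition \ref{Proposition 3}, so that $b \ge \mu_l(a)$ forces $I \le 0$ on $B$ and hence $\sup_B E = 0$; establish $\inf_{M_l \cap S_\rho} E > 0$ for small $\rho$; and push the minimax level below $\frac{1}{N}S_N^{N/2}$ by choosing $e$ to be a truncated instanton. Two points deserve flagging, though.

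First, the "main obstacle" you identify — uniformity of the Brezis--Nirenberg estimates over the noncompact, nonlinear cone $B$ — is exactly where the paper invests real work, and it is worth knowing how it is resolved: by Proposition \ref{Proposition 3}, every $u \in B$ satisfies $I'(u,a,b) \in N_l$, so $u$ solves $-\Delta u = bu^+ - au^- - \Delta z_u$ with $z_u \in N_l$ bounded in $H^1_0$; since $N_l$ is a fixed finite-dimensional space of smooth eigenfunctions, this bootstraps to a uniform $C^{2,\alpha}$ bound on $K = S \cap B$ near the concentration point $x_0$, which is what makes the interaction estimates in Lemma \ref{Lemma 5} and Lemma \ref{Lemma 6} uniform in $u \in K$. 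A second, related subtlety is the condition $\pm e \notin B$: the paper arranges it by choosing the center $x_0$ so that $I'(u_{\eps,\eps^{-\gamma}},a,b)$ avoids the finite-dimensional set $\{I'(v + \tau(v,a,b),a,b) : v \in N_l\} \subset N_l$.

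Second, your stated reason for the $N \ge 5$ restriction is not quite right. It is not a comparison of $\eps^{(N-2)/2}$ against $\O(\eps^{N-2})$; both appear at various places for all $N \ge 4$. The actual constraint is that the Young-inequality splittings of the cross terms (see \eqref{46} and \eqref{50}) require $\frac{N+2}{N} < \beta < \frac{N-2}{2}$ and $\frac{1}{N} < \gamma < 1 - \frac{2}{N-2}$, both of which have nonempty range precisely when $N \ge 5$, so that the resulting error exponents $2N\beta/(N+2)$, $N(1-\gamma)$, $2N\gamma$ all exceed $2$ and are dominated by the favorable $\eps^2$ term from $\int u_{\eps,\mu}^2$. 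And the reason Theorem \ref{Theorem 1} does hold for $N = 4$ is not that its curve is "below", but that one can replace $N_{l-1}$ by a truncated copy $T(N_{l-1})$ (Theorem \ref{Theorem 3}) so that the eigenfunction part and the bubble have disjoint supports and the energy decouples exactly; this truncation trick is a linear-subspace device and does not carry over to the nonlinear cone $B$, which is why Theorem \ref{Theorem 2} is stated only for $N \ge 5$.
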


We will prove Theorems \ref{Theorem 1} and \ref{Theorem 2} in Section \ref{T1&2}. The proofs will be based on two abstract existence results that we will prove in Section \ref{Abs} using Theorems \ref{Theorem 4} and \ref{Theorem 5} (see Theorems \ref{Theorem 7} and \ref{Theorem 8}). These abstract results are of independent interest and can be used to obtain nontrivial solutions of other problems with jumping nonlinearities as well. For example, consider the critical growth problem
\begin{equation} \label{64}
\left\{\begin{aligned}
- \Delta u & = bu^+ - au^- + (e^{u^2} - 1)\, u && \text{in } \Omega\\[10pt]
u & = 0 && \text{on } \bdry{\Omega},
\end{aligned}\right.
\end{equation}
where $\Omega$ is a bounded domain in $\R^2$. Using Theorems \ref{Theorem 7} and \ref{Theorem 8}, we will prove the following results for this problem in Section \ref{T9&10}.

\begin{theorem} \label{Theorem 9}
Let $N = 2$. If $(a,b) \in Q_l$ and
\[
b < \nu_{l-1}(a)
\]
for some $l \ge 2$, then problem \eqref{64} has a nontrivial solution.
\end{theorem}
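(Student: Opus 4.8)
\emph{Proof proposal.} The solution will be produced as a critical point of the energy functional
\[
E(u) = \half \int_\Omega \abs{\nabla u}^2\, dx - \half \int_\Omega \left[a\, (u^-)^2 + b\, (u^+)^2\right] dx - \half \int_\Omega \left(e^{u^2} - 1 - u^2\right) dx, \quad u \in H^1_0(\Omega),
\]
whose critical points are exactly the weak solutions of~\eqref{64}. Since $N = 2$, the primitive $\half\,(e^{t^2} - 1 - t^2)$ of $t \mapsto t\,(e^{t^2}-1)$ has critical Trudinger-Moser growth, so $E$ is well defined and of class $C^1$ on $X = H^1_0(\Omega)$, and $E(0) = 0$. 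The plan is to apply the abstract existence result Theorem~\ref{Theorem 7}, which rests on the nonlinear linking Theorem~\ref{Theorem 4}, so that it suffices to realize the geometry~\eqref{12} for a suitable choice of data and to check the Palais-Smale condition at the resulting level. Fix $l \ge 2$. From the construction of the lower \Fucik curve $C_l : b = \nu_{l-1}(a)$ recalled in Section~\ref{DF spectrum} one reads off: a finite dimensional subspace $N$ spanned by the Dirichlet eigenfunctions for $\lambda_1, \dots, \lambda_{l-1}$ (nontrivial since $l \ge 2$), its orthogonal complement $M$, and a positive homogeneous map $\theta \in C(M,N)$. The distinguished direction $e$ entering $Q = \set{v + se : v \in N,\ s \ge 0}$ will be a positive Moser concentration function $m_\eps$ (a truncated logarithm supported in a small ball of $\Omega$), normalized by $\norm[2]{\nabla m_\eps} = 1$; it plays, in dimension two, the role of the Aubin-Talenti bubbles used for problem~\eqref{1}.

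Next the two inequalities in~\eqref{12} must be verified. The assumption $(a,b) \in Q_l$ with $b < \nu_{l-1}(a)$ yields the linking inequality near the origin: from $a, b > \lambda_{l-1}$ one gets $E(v) \le \half\,(\lambda_{l-1} - \min\set{a,b})\,\abs{v}_2^2 \le 0$ on $N$, hence $\sup_{v \in N} E(v) = 0$, while the positivity of the \Fucik quadratic form strictly below $C_l$ forces $E$ to stay above a fixed positive level on $A = \set{\theta(w) + w : w \in M \cap S_\rho}$ for a suitable $\rho > 0$; thus $\sup_N E < \inf_A E$. The finiteness $\sup_Q E < \infty$ follows at once from the sign of the exponential term, which, along $u = v + s\,m_\eps$ and together with the negative definite quadratic form on $N$, drives $E \to -\infty$ as $\abs{v} + s \to \infty$. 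Finally, $E$ satisfies the \PS{c} condition below the first noncompactness level $c^\ast = 2\pi$: since $t\,(e^{t^2}-1)$ satisfies the Ambrosetti-Rabinowitz condition, \PS{c} sequences are bounded, and by a well-known argument based on the Trudinger-Moser inequality a bounded \PS{c} sequence converges weakly to a solution of~\eqref{64} and, when $c < 2\pi$, the concentration-compactness alternative of Lions type excludes loss of mass and forces strong convergence.

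The crux, and the step I expect to be the main obstacle, is the level estimate $c \le \sup_Q E < 2\pi$, where $c = \inf_{h \in \widetilde H} \sup_{u \in h(Q)} E(u)$ and $\widetilde H = \set{h \in H : \restr{h}{N} = \id}$. Here the precise asymptotics of the Moser functions enter: $\norm[2]{\nabla m_\eps}^2 = 1$ exactly, $\abs{m_\eps}_2^2 = \O\!\left(1/\log(1/\eps)\right)$, and $\int_\Omega(e^{s^2 m_\eps^2} - 1 - s^2 m_\eps^2)\, dx$ stays bounded for $s^2 \le 4\pi$ but blows up for $s^2 > 4\pi$, so that $\sup_{s \ge 0}\bigl[\tfrac{s^2}{2} - \half\int_\Omega(e^{s^2 m_\eps^2}-1-s^2 m_\eps^2)\, dx\bigr]$ is attained at a finite $s$ and, the subtracted integral being strictly positive, stays strictly below $2\pi$ for every small $\eps$. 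The finite dimensional directions $v \in N$ are absorbed by the negative definite quadratic form on $N$ after a Young inequality on the cross term $2s\,(\nabla v, \nabla m_\eps)$, which is $\o(1)$ uniformly on bounded subsets of $N$ because $m_\eps \rightharpoonup 0$ in $H^1_0(\Omega)$; and the extra negative term $-\tfrac{b}{2}\,s^2\,\abs{m_\eps}_2^2$ coming from $b\,(u^+)^2$ only helps. With $c < 2\pi$ established, Theorem~\ref{Theorem 7}, through Theorem~\ref{Theorem 4}, produces a critical point of $E$ at level $c \ge \inf_A E > 0$, which is therefore a nontrivial solution of~\eqref{64}. The delicate point is to carry the $N$-directions through this estimate uniformly in $v$ while keeping the exponential integral under control; the remainder of the argument transcribes the scheme of Theorem~\ref{Theorem 1}, with the Sobolev threshold $\tfrac1N\, S^{N/2}$ replaced by the Trudinger-Moser threshold $2\pi$.
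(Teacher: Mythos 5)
Your overall architecture matches the paper's: apply Theorem~\ref{Theorem 7} with a Moser concentration function in place of the Aubin--Talenti bubble, identify the subspaces $N_{l-1}$, $M_{l-1}$ and the map $\theta$ from Section~\ref{DF spectrum}, check the geometry~\eqref{12} via $(F_1)$, $(F_2)$, and $b<\nu_{l-1}(a)$, and invoke the Trudinger--Moser threshold $c^\ast = 2\pi$ for~$(F_3)$. (One small omission: you need to pick the concentration point $x_0$ so that the Moser function is not in $N_{l-1}$; the paper does this by finite-dimensionality of $N_{l-1}$.)

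However, the step you yourself flag as the crux --- the level estimate $\sup_Q E < 2\pi$ --- is where your sketch does not hold up, and it is precisely where the paper's proof departs from a direct computation. Your argument is: the pure Moser profile gives $\sup_{s\ge 0}\bigl[\tfrac{s^2}{2}-\tfrac12\int_\Omega(e^{s^2 m_\eps^2}-1-s^2 m_\eps^2)\bigr]<2\pi$ for each small $\eps$, and the $N$-directions are then ``absorbed'' because the cross term $(\nabla v,\nabla m_\eps)$ is $\o(1)$. The trouble is that the gap between that pure-Moser supremum and $2\pi$ itself shrinks to zero as $\eps\to 0$, so ``$\o(1)$'' is not automatically dominated by it; you would have to quantify both rates and show the cross terms decay strictly faster than the gap, and you would also have to control the coupling inside the exponential term $F(tv+s m_\eps)$, which does not split as $F(tv)+F(s m_\eps)$ on the overlap of supports. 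The paper does not attempt this direct comparison. It instead proves the level estimate by contradiction (Lemma~\ref{Lemma 60}): it normalizes the Moser sequence as $\omega_{j,d}$ with a second scale $d=(\log j)^{-1/4}$ (which is what makes the cross-term estimates in Lemma~\ref{Lemma 50} compatible with the needed rates), assumes the supremum reaches $2\pi$ for all $j$, takes maximizers $t_j u_j + s_j\omega_{j,(\log j)^{-1/4}}$, and extracts two pieces of information: the energy lower bound $E\ge 2\pi$ and the Euler-type identity $E'(v_j)v_j=0$ coming from the fact that $\tau\mapsto E(\tau v_j)$ is maximal at $\tau=1$. Combined with the pointwise lower bound $|v_j|\ge c\, s_j\sqrt{\log j}$ on the concentration ball and the upper bound $\int(|\nabla v_j|^2+v_j^2)\le c\,s_j^2$, this forces $j^{[(s_j - c t_j/\sqrt{\log j})^2-4\pi]/2\pi}$ to be simultaneously $\le c/\sqrt{\log j}$ and $\ge e^{-c}$, a contradiction. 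This first-order (criticality) identity is the missing mechanism in your proposal: without it the sharp control on $s_j$ near $\sqrt{4\pi}$ and the resulting contradiction are not available, and a direct Young-inequality absorption of the $N$-directions has no clear route to closing the estimate.
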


\begin{theorem} \label{Theorem 10}
Let $N = 2$. If $(a,b) \in Q_l$ and
\[
b > \mu_l(a)
\]
for some $l \ge 2$, then problem \eqref{64} has a nontrivial solution.
\end{theorem}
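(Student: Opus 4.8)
The plan is to deduce Theorem~\ref{Theorem 10} from the abstract existence result of Theorem~\ref{Theorem 8} (which in turn rests on Theorem~\ref{Theorem 5}), in exact parallel with the proof of Theorem~\ref{Theorem 2}. The variational functional associated with problem~\eqref{64} on $H^1_0(\Omega)$ with $\Omega \subset \R^2$ is
\[
E(u) = \half \int_\Omega |\nabla u|^2\, dx - \half \int_\Omega \left[a\, (u^-)^2 + b\, (u^+)^2\right] dx - \half \int_\Omega \left(e^{u^2} - u^2 - 1\right) dx,
\]
whose critical points are the weak solutions of~\eqref{64}. First I would split off the "asymptotically linear" part: write $E(u) = E_0(u) - \Psi(u)$, where $E_0$ is the functional for the Dancer--\Fucik problem~\eqref{3} and $\Psi(u) = \half \int_\Omega (e^{u^2} - u^2 - 1)\, dx \ge 0$ is the exponential perturbation, which is of higher order near $u = 0$ by the Trudinger--Moser inequality. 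Since $b > \mu_l(a)$ with $(a,b) \in Q_l$, the quadratic part $E_0$ has exactly the linking geometry that Theorem~\ref{Theorem 8} is designed to exploit: there is a $C^1$ positive homogeneous map $\tau \in C(N,M)$ — with $N$ and $M$ built from the first $l-1$ eigenspaces of $-\Delta$ and its orthogonal complement, as in the construction recalled in Section~\ref{DF spectrum} — for which $B = \set{v + \tau(v) : v \in N}$ and $A = M \cap S_\rho$ separate the energy levels of $E_0$ in the sense of~\eqref{14}.

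The key steps, in order, are then: (i) verify the local geometry — that $E$ is bounded above on $B$ (indeed $E_0 \le 0$ there by the definition of $\mu_l$, and $\Psi \ge 0$ only helps), and that $\inf_{w \in A} E(w) > 0$ for $\rho$ small, using that on $M$ the quadratic form is coercive above $\mu_l$ and the perturbation $\Psi(u) = \o(\norm{u}^2)$; (ii) produce the direction $e \in X \setminus B$ with $-e \notin B$ and verify $\sup_{u \in Q} E(u) < \infty$ on $Q = \set{u + se : u \in B,\, s \ge 0}$ — here one uses the sign-definiteness and growth of the exponential term to push $E \to -\infty$ along the $se$ direction, which is precisely where $N = 2$ (rather than $N \ge 5$) is essential, since in dimension two the Trudinger--Moser functional plays the role that the critical Sobolev term plays in Theorem~\ref{Theorem 2}; (iii) invoke Theorem~\ref{Theorem 8} (equivalently Theorem~\ref{Theorem 5}) to produce a candidate critical level $c \ge \inf_{w \in A} E(w) > 0 = E(0)$, so that any critical point at level $c$ is automatically nontrivial; and (iv) verify the Palais--Smale condition \PS{c} for $E$ at that level. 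Step (iv) is where the strict inequality $b > \mu_l(a)$ (as opposed to $b \ge \mu_l(a)$ in Theorem~\ref{Theorem 2}) and the finiteness of the mountain-pass-type sup over $Q$ are combined with a concentration-compactness analysis: a \PS{c} sequence is bounded, and one rules out the loss of compactness by showing the energy $c$ lies below the threshold set by the best Trudinger--Moser constant, exactly as in the treatment of problem~\eqref{64} in Section~\ref{T9&10} for the lower region.

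I expect the main obstacle to be step (iv), the Palais--Smale analysis in the borderline Trudinger--Moser regime: unlike the polynomial critical exponent case, the exponential nonlinearity forces one to control $\int_\Omega (e^{u_n^2} - 1)\, u_n^2\, dx$ along a \PS{c} sequence, which requires a uniform bound of the form $\sup_n \norm{\nabla u_n}_2^2 < 4\pi$ (or the relevant Moser constant) to apply the sharp Trudinger--Moser inequality and recover strong convergence. Establishing that the linking level $c$ furnished by Theorem~\ref{Theorem 8} satisfies this sub-threshold estimate is the delicate energy computation; it hinges on choosing the perturbation direction $e$ (and testing with suitably scaled Moser-type functions) so that $\sup_{u \in Q} E(u)$ stays strictly below the critical energy level. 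Once this estimate is in place, the remaining steps — boundedness of \PS{c} sequences, weak convergence to a solution, and the nontriviality conclusion from $c > 0$ — are routine. The second geometric hypothesis $-e \notin B$ needed by Theorem~\ref{Theorem 5} is easily arranged because $B$ is a graph over the finite-dimensional space $N$, so a generic choice of $e$ in the $M$-direction works.
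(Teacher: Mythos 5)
Your overall strategy---invoking Theorem~\ref{Theorem 8} via Theorem~\ref{Theorem 5}, splitting $E$ into the Dancer--\Fucik quadratic part plus a nonnegative perturbation, and testing with Moser-type concentrating functions to keep the minimax level below the Trudinger--Moser threshold $c^\ast = 2\pi$---is exactly the route the paper follows. But two pieces of your plan are misplaced or missing, and both are where the real work happens.

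First, you locate the use of the strict inequality $b > \mu_l(a)$ inside the Palais--Smale/concentration-compactness analysis (your step (iv)). That is not where it belongs. The compactness condition $(F_3)$ (with $c^\ast = 2\pi$) is simply quoted from de~Figueiredo--Miyagaki--Ruf and is the same whether $b \ge \mu_l(a)$ or $b > \mu_l(a)$. The strict inequality is needed instead in your step (ii), to prove the energy estimate $\sup_{u\in Q} E(u) < 2\pi$. Concretely, the paper's Lemma~\ref{Lemma 60} requires $\sup_{u\in K} I(u,a,b) < 0$ (strict, uniform) with $K = S\cap B$; the weak inequality $b \ge \mu_l(a)$ gives only $I(u,a,b) \le 0$ on $B$, which does not suffice, because the control on the cross terms involving the Moser bubble degenerates as $I(u,a,b) \to 0$. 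The paper gets strictness from $b > \mu_l(a)$ by perturbing to $(a/(1+\delta),\, b/(1+\delta))$, using monotonicity of $m_l$ to get $m_l \le 0$ there, and extracting a uniform negative bound $I(u,a,b) \le -\delta/c_{52}$ on $K$. Without recognizing that the strict inequality feeds the energy estimate (not the compactness), your plan as written would stall at precisely the point you flag as ``the delicate energy computation.''

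Second, your sketch omits a nontrivial hypothesis of the key estimate: Lemma~\ref{Lemma 60} (like Lemma~\ref{Lemma 5} in the critical Sobolev case) requires the test set $K = S\cap B$ to be bounded in $C^2$ on a neighborhood of the concentration point. Since $B = \{v + \tau(v,a,b) : v\in N_l\}$ is a nonlinear graph, this does not come for free; the paper proves it by observing that $u\in K$ solves $-\Delta u = bu^+ - au^- - \Delta z_u$ with $z_u = I'(u,a,b)\in N_l$ bounded in $H^1_0$, and then bootstrapping via elliptic regularity and the finite dimensionality (hence $C^{2,\alpha}$-regularity) of $N_l$. You should also correct the decomposition: here $N = N_l$ (the span of the first $l$ eigenspaces) and $M = M_l$, not the first $l-1$; the $(N_{l-1}, M_{l-1})$ splitting is for the companion Theorem~\ref{Theorem 9}. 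Finally, the paper's way of arranging $\pm e\notin B$ is a little sharper than ``generic $e$ in the $M$-direction'': it notes that $I'(\cdot,a,b)$ maps $B$ into the finite-dimensional space $N_l$, while the gradients of Moser functions at distinct centers are linearly independent, so some center $x_0$ works. Your heuristic is not wrong, but it silently assumes the Moser bubble can be chosen in $M_l$, which is not how the paper sets it up.
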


\section{Preliminaries on the Dancer-\Fucik spectrum} \label{DF spectrum}

In this preliminary section we briefly recall the construction of the minimal and maximal curves of the Dancer-\Fucik spectrum in an abstract setting introduced in Perera and Schechter \cite[Chapter 4]{MR3012848}. Let $H$ be a Hilbert space with the inner product $\ip{\cdot}{\cdot}$ and the associated norm $\norm{\cdot}$. Recall that an operator $\varphi : H \to H$ is monotone if $\ip{\varphi(u) - \varphi(v)}{u - v} \ge 0$ for all $u, v \in H$ and that $\varphi \in C(H,H)$ is a potential operator if $\varphi = \Phi'$ for some functional $\Phi \in C^1(H,\R)$, called a potential for $\varphi$. Assume that there are positive homogeneous monotone potential operators $p, n \in C(H,H)$ such that
\[
p(u) + n(u) = u, \quad \ip{p(u)}{n(u)} = 0 \quad \forall u \in H.
\]
We use the suggestive notation $u^+ = p(u),\, u^- = - n(u)$, so that
\[
u = u^+ - u^-, \quad \sip{u^+}{u^-} = 0.
\]
This implies
\begin{equation} \label{23}
\norm{u}^2 = \snorm{u^+}^2 + \snorm{u^-}^2,
\end{equation}
in particular, $\snorm{u^\pm} \le \norm{u}$.

Let $A$ be a self-adjoint operator on $H$ with the spectrum $\sigma(A) \subset (0,\infty)$ and $A^{-1}$ compact. Then $\sigma(A)$ consists of isolated eigenvalues $\lambda_l,\, l \ge 1$ of finite multiplicities satisfying $0 < \lambda_1 < \cdots < \lambda_l < \cdots$. Moreover, $D = D(A^{1/2})$ is a Hilbert space with the inner product
\[
\ip[D]{u}{v} = \sip{A^{1/2}\, u}{A^{1/2}\, v} = \ip{Au}{v}
\]
and the associated norm
\[
\norm[D]{u} = \snorm{A^{1/2}\, u} = \ip{Au}{u}^{1/2}.
\]
We have
\[
\norm[D]{u}^2 = \ip{Au}{u} \ge \lambda_1 \ip{u}{u} = \lambda_1 \norm{u}^2 \quad \forall u \in H,
\]
so $D \hookrightarrow H$, and the embedding is compact since $A^{-1}$ is a compact operator. Let $E_l$ be the eigenspace of $\lambda_l$,
\[
N_l = \bigoplus_{j=1}^l E_j, \qquad M_l = N_l^\perp \cap D.
\]
Then $D = N_l \oplus M_l,\, u = v + w$ is an orthogonal decomposition with respect to both $\ip{\cdot}{\cdot}$ and $\ip[D]{\cdot}{\cdot}$. Moreover,
\begin{equation} \label{24}
\norm[D]{v}^2 = \ip{Av}{v} \le \lambda_l \ip{v}{v} = \lambda_l \norm{v}^2 \quad \forall v \in N_l
\end{equation}
and
\begin{equation} \label{25}
\norm[D]{w}^2 = \ip{Aw}{w} \ge \lambda_{l+1} \ip{w}{w} = \lambda_{l+1} \norm{w}^2 \quad \forall w \in M_l.
\end{equation}
We assume that $w^\pm \ne 0$ for all $w \in M_1 \setminus \set{0}$.

\newpage

The set $\Sigma(A)$ consisting of points $(a,b) \in \R^2$ for which the equation
\begin{equation} \label{2.1}
Au = bu^+ - au^-, \quad u \in D
\end{equation}
has a nontrivial solution is called the Dancer-\Fucik spectrum of $A$. It is a closed subset of $\R^2$ (see \cite[Proposition 4.4.3]{MR3012848}). Since equation \eqref{2.1} reduces to $Au = \lambda u$ when $a = b = \lambda$, $\Sigma(A)$ contains the points $(\lambda_l,\lambda_l)$.

It is easily seen that $A$ is a potential operator with the potential
\[
\half \ip{Au}{u} = \half \norm[D]{u}^2.
\]
The potentials of $p$ and $n$ are
\[
\half \ip{p(u)}{u} = \half\, \snorm{u^+}^2, \quad \half \ip{n(u)}{u} = \half\, \snorm{u^-}^2,
\]
respectively (see \cite[Proposition 4.3.2]{MR3012848}). So solutions of equation \eqref{2.1} coincide with critical points of the $C^1$-functional
\[
I(u,a,b) = \norm[D]{u}^2 - a\, \snorm{u^-}^2 - b\, \snorm{u^+}^2, \quad u \in D,
\]
and $(a,b) \in \Sigma(A)$ if and only if $I(\cdot,a,b)$ has a nontrivial critical point. Let
\[
Q_l = (\lambda_{l-1},\lambda_{l+1}) \times (\lambda_{l-1},\lambda_{l+1}), \quad l \ge 2.
\]
If $(a,b) \in Q_l$, then $I(v + y + w,a,b),\, v + y + w \in N_{l-1} \oplus E_l \oplus M_l$ is strictly concave in $v$ and strictly convex in $w$, i.e., for $v_1 \ne v_2 \in N_{l-1},\, w \in M_{l-1}$,
\[
I((1 - t)\, v_1 + t v_2 + w,a,b) > (1 - t)\, I(v_1 + w,a,b) + t I(v_2 + w,a,b) \quad \forall t \in (0,1),
\]
and for $v \in N_l,\, w_1 \ne w_2 \in M_l$,
\[
I(v + (1 - t)\, w_1 + t w_2,a,b) < (1 - t)\, I(v + w_1,a,b) + t I(v + w_2,a,b) \quad \forall t \in (0,1)
\]
(see \cite[Proposition 4.6.1]{MR3012848}).

\begin{proposition}[{\cite[Proposition 4.7.1, Corollary 4.7.3, \& Proposition 4.7.4]{MR3012848}}] \label{Proposition 3}
Let $(a,b) \in Q_l$.
\begin{enumroman}
\item There is a positive homogeneous map $\theta(\cdot,a,b) \in C(M_{l-1},N_{l-1})$ such that $v = \theta(w,a,b)$ is the unique solution of
    \[
    I(v + w,a,b) = \sup_{v' \in N_{l-1}} I(v' + w,a,b), \quad w \in M_{l-1}.
    \]
    Moreover, $\theta$ is continuous on $M_{l-1} \times Q_l$, $\theta(w,\lambda_l,\lambda_l) = 0$ for all $w \in M_{l-1}$, and $I'(v + w,a,b) \perp N_{l-1}$ if and only if $v = \theta(w,a,b)$.
\item There is a positive homogeneous map $\tau(\cdot,a,b) \in C(N_l,M_l)$ such that $w = \tau(v,a,b)$ is the unique solution of
    \[
    I(v + w,a,b) = \inf_{w' \in M_l}\, I(v + w',a,b), \quad v \in N_l.
    \]
    Moreover, $\tau$ is continuous on $N_l \times Q_l$, $\tau(v,\lambda_l,\lambda_l) = 0$ for all $v \in N_l$, and $I'(v + w,a,b) \perp M_l$ if and only if $w = \tau(v,a,b)$.
\end{enumroman}
\end{proposition}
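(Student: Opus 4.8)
The plan is to establish parts (i) and (ii) by the direct method, exploiting the strict concavity/convexity of $I(\cdot,a,b)$ on the appropriate subspaces together with the compact embedding $D \hookrightarrow H$. I will describe (i) in detail; (ii) is entirely analogous with the roles of $N$ and $M$, and of concavity and convexity, interchanged.

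\textbf{Step 1: Existence and uniqueness of $\theta(w,a,b)$.} Fix $w \in M_{l-1}$ and consider $\phi(v) := I(v+w,a,b)$ for $v \in N_{l-1}$. Write $v = v_0 + y$ with $v_0 \in N_{l-2}$ and $y \in E_{l-1}$. Using \eqref{24}, \eqref{23}, and $a < \lambda_{l-1} < \lambda_l < b$ together with $\snorm{v^\pm} \le \norm{v}$... actually, more carefully, one uses that on $N_{l-1}$ one has $\norm[D]{v}^2 \le \lambda_{l-1}\norm v^2$ while the lower-order eigenspaces give strict inequality, so that $\phi(v) \le (\lambda_{l-1} - \min\{a,b\})\norm{v}^2 + (\text{lower order})$ with $\min\{a,b\} > \lambda_{l-2}$; since $\lambda_{l-1} - \min\{a,b\} < 0$ when restricted to the part of $v$ below $E_{l-1}$ is not quite enough, so instead one argues directly that $\phi$ is anticoercive on $N_{l-1}$: indeed, $-\phi(v) = a\snorm{v^-}^2 + b\snorm{v^+}^2 - \norm[D]{v}^2 \ge \min\{a,b\}\norm v^2 - \lambda_{l-1}\norm v^2 + (\text{gap terms from }N_{l-2})$, and since $\min\{a,b\} > \lambda_{l-1}$ is \emph{false} in general — rather $a,b \in (\lambda_{l-1},\lambda_{l+1})$ so $\min\{a,b\} > \lambda_{l-1}$ holds precisely. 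Hence $-\phi(v) \ge (\min\{a,b\} - \lambda_{l-1})\norm v^2 \to \infty$, so $\phi$ is anticoercive and, being continuous on the finite-dimensional space $N_{l-1}$, attains its supremum. Strict concavity of $\phi$ (quoted from \cite[Proposition 4.6.1]{MR3012848}) gives uniqueness; call the maximizer $\theta(w,a,b)$. Positive homogeneity of $\theta$ in $w$ follows from the positive homogeneity of $p,n,A$, which makes $\phi_{tw}(tv) = t^2\phi_w(v)$ for $t \ge 0$, so the maximizer scales correspondingly.

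\textbf{Step 2: The variational characterization.} Since $v \mapsto \phi(v)$ is $C^1$ and $\theta(w,a,b)$ is an interior maximizer over the linear space $N_{l-1}$, we have $\phi'(\theta(w,a,b)) = 0$, i.e. $I'(v+w,a,b)\perp N_{l-1}$ when $v = \theta(w,a,b)$. Conversely, if $I'(v+w,a,b)\perp N_{l-1}$ then $v$ is a critical point of the strictly concave $C^1$ function $\phi$, hence its unique maximizer, so $v = \theta(w,a,b)$. The special value $\theta(w,\lambda_l,\lambda_l) = 0$: when $a = b = \lambda_l$, $I(v+w,\lambda_l,\lambda_l) = \norm[D]{v+w}^2 - \lambda_l\norm{v+w}^2 = \bigl(\norm[D]{v}^2 - \lambda_l\norm v^2\bigr) + \bigl(\norm[D]{w}^2 - \lambda_l\norm w^2\bigr)$ by orthogonality of $N_{l-1}$ and $M_{l-1}$ in both inner products, and by \eqref{24} the first bracket is $\le 0$ with equality iff $v = 0$ — wait, \eqref{24} gives $\le \lambda_{l-1}\norm v^2$, so the first bracket is $\le (\lambda_{l-1}-\lambda_l)\norm v^2 \le 0$, strictly negative unless $v = 0$ — hence $v = 0$ is the unique maximizer.

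\textbf{Step 3: Joint continuity on $M_{l-1}\times Q_l$.} This is the main obstacle. I would argue by a compactness-and-uniqueness scheme: let $(w_k,a_k,b_k)\to(w,a,b)$ in $M_{l-1}\times Q_l$ and set $v_k = \theta(w_k,a_k,b_k)$. First show $(v_k)$ is bounded: from $I(v_k+w_k,a_k,b_k)\ge I(w_k,a_k,b_k)$ (comparing with the competitor $v=0$) and the uniform anticoercivity estimate of Step 1 — the constant $\min\{a_k,b_k\}-\lambda_{l-1}$ is bounded below by a positive constant on compact subsets of $Q_l$, and $(w_k)$ is bounded — one gets $\norm{v_k} \le C$. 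Since $N_{l-1}$ is finite dimensional, pass to a subsequence $v_k \to v^\ast$. The functional $I$ depends continuously on all its arguments (here finite dimensionality makes even the $H$-norms and the maps $p,n$ continuous on the relevant bounded sets — no compactness of the embedding is actually needed on $N_{l-1}$), so passing to the limit in $I'(v_k+w_k,a_k,b_k)\perp N_{l-1}$ gives $I'(v^\ast+w,a,b)\perp N_{l-1}$, whence $v^\ast = \theta(w,a,b)$ by the characterization in Step 2. Since every subsequence has a further subsequence converging to the same limit $\theta(w,a,b)$, the full sequence converges, proving continuity. The part of Proposition 3 genuinely requiring the compact embedding $D\hookrightarrow H$ is the infinite-dimensional counterpart in (ii) — there, to show $\tau$ is well-defined one needs coercivity of $w\mapsto I(v+w,a,b)$ on $M_l$, which follows from \eqref{25} and $b < \lambda_{l+1}$, and to show continuity one passes to a weakly convergent subsequence in $M_l$, uses compactness of $D\hookrightarrow H$ to pass to the limit in the lower-order terms $\snorm{w^\pm}^2$, and invokes weak lower semicontinuity of $\norm[D]{\cdot}^2$ together with the strict convexity to upgrade weak to strong convergence and identify the limit. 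I expect the bookkeeping in this weak-to-strong upgrade, and the uniformity of the coercivity/anticoercivity constants over compact subsets of $Q_l$, to be the most delicate points; everything else is a routine application of the direct method and the already-quoted strict concavity/convexity.
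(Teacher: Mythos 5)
The paper does not prove Proposition~\ref{Proposition 3}; it is quoted verbatim with a citation to Proposition~4.7.1, Corollary~4.7.3, and Proposition~4.7.4 of Perera--Schechter \cite{MR3012848}, where the actual proofs live. So there is no proof in this paper to compare against, but your reconstruction does track the argument in that reference: anticoercivity of $v\mapsto I(v+w,a,b)$ on the finite-dimensional space $N_{l-1}$ combined with the quoted strict concavity gives a unique maximizer $\theta(w,a,b)$; coercivity of $w\mapsto I(v+w,a,b)$ on $M_l$, strict convexity, and weak lower semicontinuity (with the compact embedding $D\hookrightarrow H$ used to pass to the limit in the $\snorm{w^\pm}^2$ terms) gives a unique minimizer $\tau(v,a,b)$; positive homogeneity follows from $I(tu,a,b)=t^2I(u,a,b)$ for $t\ge0$ and uniqueness; the $I'\perp N_{l-1}$ (resp.\ $I'\perp M_l$) characterization and the special value at $a=b=\lambda_l$ are handled as you describe; and joint continuity on $M_{l-1}\times Q_l$ (resp.\ $N_l\times Q_l$) comes from the compactness-and-uniqueness scheme you outline, with the only nontrivial point being the weak-to-strong upgrade in (ii).

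Two points in your Step~1 need tightening. The displayed identity $-\phi(v)=a\snorm{v^-}^2+b\snorm{v^+}^2-\norm[D]{v}^2$ is $-I(v,a,b)$, not $-I(v+w,a,b)$; since $(v+w)^\pm\ne v^\pm+w^\pm$ in general, the $w$-dependence cannot be dropped term by term. The correct route is to use \eqref{23} to write $a\snorm{(v+w)^-}^2+b\snorm{(v+w)^+}^2\ge\min\{a,b\}\norm{v+w}^2$, then the $H$- and $D$-orthogonality of $N_{l-1}$ and $M_{l-1}$ together with \eqref{24} to get
\[
I(v+w,a,b)\le\bigl(\lambda_{l-1}-\min\{a,b\}\bigr)\norm v^2+\norm[D]{w}^2-\min\{a,b\}\norm w^2,
\]
which is anticoercive in $v$ precisely because $\min\{a,b\}>\lambda_{l-1}$ on $Q_l$. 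Relatedly, the parenthetical aside about whether $\min\{a,b\}>\lambda_{l-1}$ ``is false in general'' should simply be deleted: it holds by the definition of $Q_l$, and it is exactly the hypothesis your estimate needs. Neither slip damages the overall scheme, which is sound.
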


Let $S = \set{u \in D : \norm[D]{u} = 1}$ be the unit sphere in $D$. Set
\begin{equation} \label{27}
n_{l-1}(a,b) = \inf_{w \in M_{l-1} \cap S}\, I(\theta(w,a,b) + w,a,b)
\end{equation}
and
\begin{equation} \label{28}
m_l(a,b) = \sup_{v \in N_l \cap S}\, I(v + \tau(v,a,b),a,b).
\end{equation}
Since $I(u,a,b)$ is nonincreasing in $a$ for fixed $u$ and $b$, and in $b$ for fixed $u$ and $a$, $n_{l-1}(a,b)$ and $m_l(a,b)$ are nonincreasing in $a$ for fixed $b$, and in $b$ for fixed $a$. Moreover, $n_{l-1}$ and $m_l$ are continuous on $Q_l$ and $n_{l-1}(\lambda_l,\lambda_l) = 0 = m_l(\lambda_l,\lambda_l)$ (see \cite[Lemma 4.7.6 \& Proposition 4.7.7]{MR3012848}). For $a \in (\lambda_{l-1},\lambda_{l+1})$, set
\begin{equation} \label{29}
\nu_{l-1}(a) = \sup \set{b \in (\lambda_{l-1},\lambda_{l+1}) : n_{l-1}(a,b) \ge 0}
\end{equation}
and
\begin{equation} \label{30}
\mu_l(a) = \inf \set{b \in (\lambda_{l-1},\lambda_{l+1}) : m_l(a,b) \le 0}.
\end{equation}

\begin{proposition}[{\cite[Theorem 4.7.9]{MR3012848}}]
Let $(a,b) \in Q_l$.
\begin{enumroman}
\item $\nu_{l-1}$ is a continuous and strictly decreasing function, $\nu_{l-1}(\lambda_l) = \lambda_l$, $(a,b) \in \Sigma(A)$ if $b = \nu_{l-1}(a)$, and $(a,b) \notin \Sigma(A)$ if $b < \nu_{l-1}(a)$.
\item $\mu_l$ is a continuous and strictly decreasing function, $\mu_l(\lambda_l) = \lambda_l$, $(a,b) \in \Sigma(A)$ if $b = \mu_l(a)$, and $(a,b) \notin \Sigma(A)$ if $b > \mu_l(a)$.
\item $\nu_{l-1}(a) \le \mu_l(a)$.
\end{enumroman}
\end{proposition}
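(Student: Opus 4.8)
The plan is to deduce every assertion from the recorded properties of $n_{l-1}$ and $m_l$ --- continuity and monotonicity on $Q_l$, and $n_{l-1}(\lambda_l,\lambda_l)=0=m_l(\lambda_l,\lambda_l)$ --- together with two elementary observations. First, since $I(\cdot,a,b)$ is positive homogeneous of degree two and $\theta(\cdot,a,b),\tau(\cdot,a,b)$ are positive homogeneous, the maps
\[
F(w):=I(\theta(w,a,b)+w,a,b)\ \text{on }M_{l-1},\qquad G(v):=I(v+\tau(v,a,b),a,b)\ \text{on }N_l
\]
are positive homogeneous of degree two, so $F(w)\ge n_{l-1}(a,b)\,\norm[D]{w}^2$ for $w\in M_{l-1}$ and $G(v)\le m_l(a,b)\,\norm[D]{v}^2$ for $v\in N_l$. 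Second, for $(a,b)\in Q_l$ the quadratic form $I(\cdot,a,b)$ is negative on $N_{l-1}\setminus\set{0}$ and positive on $M_l\setminus\set{0}$: by \eqref{24} and $\lambda_{l-1}<\min\set{a,b}$ one has $I(v,a,b)\le(\lambda_{l-1}-\min\set{a,b})\snorm{v}^2$ for $v\in N_{l-1}$, and by \eqref{25} and $\max\set{a,b}<\lambda_{l+1}$ one has $I(w,a,b)\ge(\lambda_{l+1}-\max\set{a,b})\snorm{w}^2$ for $w\in M_l$; in particular $v\mapsto I(v+w,a,b)$ is coercive on $N_{l-1}$ uniformly for $w$ on the unit sphere $S$. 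I would prove (i) in full; (ii) is entirely symmetric under the interchange $(n_{l-1},\theta,M_{l-1})\leftrightarrow(m_l,\tau,N_l)$ with maxima, minima and inequalities reversed, and (iii) I would obtain at the end from (i) and (ii).

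For (i), I would first note that the well-definedness of $\nu_{l-1}$ --- namely that for each $a\in(\lambda_{l-1},\lambda_{l+1})$ the set in \eqref{29} is nonempty with supremum in $(\lambda_{l-1},\lambda_{l+1})$, whence it equals $(\lambda_{l-1},\nu_{l-1}(a)]$ and $n_{l-1}(a,\nu_{l-1}(a))=0$ --- is a matter of elementary estimates with \eqref{23}--\eqref{25} that I would cite from \cite[Chapter~4]{MR3012848}; granting it, $\nu_{l-1}(\lambda_l)=\lambda_l$ follows from $n_{l-1}(\lambda_l,\lambda_l)=0$ and the monotonicity of $n_{l-1}$, and $\nu_{l-1}$ is nonincreasing because $n_{l-1}$ is. Next, to show $(a,b)\notin\Sigma(A)$ when $b<\nu_{l-1}(a)$ (so that $n_{l-1}(a,b)>0$, the strict positivity coming from the strict monotonicity established below): if $Au=bu^+-au^-$ with $u\ne0$, then $I'(u,a,b)=0$, hence $I(u,a,b)=0$ by Euler's identity for the $2$-homogeneous $I(\cdot,a,b)$; writing $u=v+w$ with $v\in N_{l-1}$, $w\in M_{l-1}$, Proposition~\ref{Proposition 3}(i) forces $v=\theta(w,a,b)$, so $0=I(u,a,b)=F(w)\ge n_{l-1}(a,b)\norm[D]{w}^2$, giving $w=0$ and then $u=\theta(0,a,b)=0$ --- a contradiction.

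The heart of the matter is that $(a,b)\in\Sigma(A)$ when $b=\nu_{l-1}(a)$, where $n_{l-1}(a,b)=0$. Here I would take $w_k\in M_{l-1}\cap S$ with $F(w_k)\to0$; by the uniform coercivity above, $\set{\theta(w_k,a,b)}$ is bounded in the finite-dimensional space $N_{l-1}$, and since $D\hookrightarrow H$ is compact, along a subsequence $w_k\rightharpoonup w_0$ in $D$, $w_k\to w_0$ in $H$, and $\theta(w_k,a,b)\to v_0$ in $N_{l-1}$. Set $u_0=v_0+w_0$. Passing to the limit in $I'(\theta(w_k,a,b)+w_k,a,b)\perp N_{l-1}$ (the $\norm[D]{\cdot}^2$-part of $I'$ is weakly continuous on $D$, the $\snorm{\cdot^\pm}^2$-parts are continuous on $H$) gives $I'(u_0,a,b)\perp N_{l-1}$, hence $v_0=\theta(w_0,a,b)$ and $I(u_0,a,b)=F(w_0)\ge0$; on the other hand $\norm[D]{w_0}^2\le\liminf\norm[D]{w_k}^2=1$ and the $\snorm{\cdot^\pm}^2$-terms of $F(w_k)$ converge to those of $I(u_0,a,b)$, so $I(u_0,a,b)\le\lim F(w_k)=0$. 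Thus $F(w_0)=I(u_0,a,b)=0$ with $u_0\ne0$ (otherwise $F(w_k)\to1$); since $F\ge0$, $w_0$ minimizes $F$ over $M_{l-1}$, so for $h\in M_{l-1}$, using that $\theta(\cdot,a,b)$ is locally Lipschitz and $I'(u_0,a,b)\perp N_{l-1}$, one has $F(w_0+th)=I(u_0+th,a,b)+\o(t)=F(w_0)+t\,\ip{I'(u_0,a,b)}{h}+\o(t)$, and minimality of $w_0$ forces $\ip{I'(u_0,a,b)}{h}=0$; hence $I'(u_0,a,b)\perp M_{l-1}$, and with $I'(u_0,a,b)\perp N_{l-1}$ we get $I'(u_0,a,b)=0$, $u_0\ne0$, i.e.\ $(a,b)\in\Sigma(A)$. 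The same compactness scheme would give the \emph{strict} monotonicity of $n_{l-1}$: if $n_{l-1}(a_1,b_0)=n_{l-1}(a_2,b_0)$ with $a_1<a_2$, a minimizing sequence for $n_{l-1}(a_1,b_0)$ is simultaneously minimizing for $n_{l-1}(a_2,b_0)$ (by the squeeze $n_{l-1}(a_2,b_0)\le F(w_k,a_2,b_0)\le F(w_k,a_1,b_0)\to n_{l-1}(a_1,b_0)=n_{l-1}(a_2,b_0)$), and since $F(w_k,a_1,b_0)-F(w_k,a_2,b_0)\ge(a_2-a_1)\snorm{(\theta(w_k,a_2,b_0)+w_k)^-}^2$, the limit $u_0\ne0$ of this sequence at $(a_2,b_0)$ satisfies $(u_0)^-=0$, i.e.\ $Au_0=b_0u_0$ with $0\ne u_0\ge0$; but every nonzero element of $M_1$ changes sign, so the only nonnegative eigenfunctions of $A$ lie in $E_1$, forcing $b_0=\lambda_1$ --- impossible since $b_0\in(\lambda_{l-1},\lambda_{l+1})$ and $l\ge2$. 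Strict monotonicity in $b$ is identical (producing a nonpositive eigenfunction and the eigenvalue $a$). Consequently $n_{l-1}(a,b)>0$ for $b<\nu_{l-1}(a)$, and $\nu_{l-1}$ --- being decreasing with $n_{l-1}(a,\cdot)$ having a single zero on each vertical line and $n_{l-1}$ continuous --- is strictly decreasing and continuous.

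For (iii): by (ii), $\mu_l$ is a well-defined function on $(\lambda_{l-1},\lambda_{l+1})$ with values in $(\lambda_{l-1},\lambda_{l+1})$ and $(a,\mu_l(a))\in\Sigma(A)$ for every $a$. If $\nu_{l-1}(a)>\mu_l(a)$ for some $a$, put $b=\mu_l(a)$; then $(a,b)\in Q_l$ with $b<\nu_{l-1}(a)$, so part (i) gives $(a,b)\notin\Sigma(A)$, contradicting $(a,\mu_l(a))\in\Sigma(A)$. Hence $\nu_{l-1}(a)\le\mu_l(a)$. I expect the main obstacle to be the compactness argument in the third paragraph --- together with the refinement yielding the strict monotonicity --- which hinges on the uniform boundedness of $\theta(\cdot,a,b)$ and $\tau(\cdot,a,b)$ on $S$, on the compactness of $D\hookrightarrow H$, and on the local Lipschitz regularity of the saddle maps; the only other non-routine point, the verification via \eqref{23}--\eqref{25} that $\nu_{l-1}$ and $\mu_l$ are everywhere defined with values strictly inside $(\lambda_{l-1},\lambda_{l+1})$, I would take from \cite[Chapter~4]{MR3012848}.
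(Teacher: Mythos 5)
The paper offers no proof of this proposition; it is imported directly from \cite[Theorem 4.7.9]{MR3012848}, so your reconstruction can only be judged on its own merits. The overall structure --- ruling out nontrivial solutions below $C_l$ via Euler's identity and Proposition~\ref{Proposition 3}, producing one on $C_l$ from a minimizing sequence of $F(w):=I(\theta(w,a,b)+w,a,b)$, and closing via uniqueness of the zero of $n_{l-1}(a,\cdot)$ and $n_{l-1}(\cdot,b)$ --- is the expected variational argument and is largely sound. However, two steps need repair.

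To pass from minimality of $w_0$ to $I'(u_0,a,b)\perp M_{l-1}$ you write $F(w_0+th)=I(u_0+th,a,b)+\o(t)$, supported by the assertion that $\theta(\cdot,a,b)$ is locally Lipschitz. That property is not recorded in the paper (Proposition~\ref{Proposition 3} gives only continuity), and without it the difference $F(w_0+th)-I(u_0+th,a,b)$ contains a term quadratic in $\norm{\theta(w_0+th,a,b)-v_0}$, which is $\o(1)$ but not $\o(t)$; the one-sided bound $F(w_0+th)\ge I(u_0+th,a,b)$ combined with $F(w_0+th)\ge F(w_0)$ is by itself consistent with $\ip{I'(u_0,a,b)}{h}\ne 0$. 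The clean fix is the envelope (Danskin) sandwich, which needs only continuity of $\theta$: for $w,w'\in M_{l-1}$,
\[
I(\theta(w,a,b)+w',a,b)-I(\theta(w,a,b)+w,a,b)\ \le\ F(w')-F(w)\ \le\ I(\theta(w',a,b)+w',a,b)-I(\theta(w',a,b)+w,a,b),
\]
whence $F\in C^1(M_{l-1})$ with $F'(w)h=\ip{I'(\theta(w,a,b)+w,a,b)}{h}$, after which optimality of $w_0$ gives the orthogonality. The second problem is an overclaim: you state that the compactness scheme yields \emph{strict} monotonicity of $n_{l-1}$ throughout $Q_l$, but your extracted limit $u_0$ is a solution of \eqref{2.1} only because the common value $n_{l-1}(a_1,b_0)=n_{l-1}(a_2,b_0)$ is zero --- that is what makes $w_0$ a global minimizer of $F\ge 0$ and lets you conclude $I'(u_0,a_2,b_0)=0$. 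For a nonzero common value the limit need not be critical and ``$Au_0=b_0u_0$'' is unsupported. This is harmless for the proposition, since you only ever use uniqueness of the zero of $n_{l-1}(a,\cdot)$ and of $n_{l-1}(\cdot,b)$, which the zero-level version of your argument does establish; but the claim should be restricted accordingly. With these two repairs, and the symmetric treatment of (ii) that you indicate, the proposal is a correct reconstruction.
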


Thus,
\[
C_l : b = \nu_{l-1}(a), \qquad C^l : b = \mu_l(a)
\]
are strictly decreasing curves in $Q_l$ that belong to $\Sigma(A)$. They both pass through the point $(\lambda_l,\lambda_l)$ and may coincide. The region $\set{(a,b) \in Q_l : b < \nu_{l-1}(a)}$ below the lower curve $C_l$ and the region $\set{(a,b) \in Q_l : b > \mu_l(a)}$ above the upper curve $C^l$ are free of $\Sigma(A)$. They are the minimal and maximal curves of $\Sigma(A)$ in $Q_l$ in this sense. Points in the region $\set{(a,b) \in Q_l : \nu_{l-1}(a) < b < \mu_l(a)}$ between $C_l$ and $C^l$, when it is nonempty, may or may not belong to $\Sigma(A)$.

\section{Linking} \label{Linking}

In this section we prove Theorems \ref{Theorem 4} and \ref{Theorem 5} and a generalization of Theorem \ref{Theorem 4} (see Theorem \ref{Theorem 3}). The following variant definition of linking was given in Schechter \cite{MR1636619} and is a refined version of a definition given in Schechter and Tintarev \cite{MR95k:58033}. Let $X$ be a Banach space. Denote by $\Phi$ the set of all mappings $\Gamma \in C(X \times [0,1],X)$ such that, writing $\Gamma(u,t) = \Gamma(t)\, u$, we have
\begin{enumroman}
\item $\Gamma(0) = \id$,
\item $\Gamma(t)$ is a homeomorphism of $X$ for all $t \in [0,1)$, and the mapping $\Gamma^{-1} : X \times [0,1) \to X,\, (u,t) \mapsto \Gamma(t)^{-1}\, u$ is continuous,
\item $\Gamma(1)\, X$ is a single point in $X$, and $\Gamma(t)\, u \to \Gamma(1)\, X$ as $t \to 1$, uniformly on bounded subsets of $X$,
\item for each bounded subset $A$ of $X$ and $t_0 \in [0,1)$,
    \[
    \sup_{(u,t) \in A \times [0,t_0]} \Big(\norm{\Gamma(t)\, u} + \norm{\Gamma^{-1}(t)\, u}\Big) < \infty.
    \]
\end{enumroman}

\begin{definition}[\cite{MR1636619}]
Let $A$ and $B$ be disjoint nonempty subsets of $X$. We say that $A$ links $B$ if
\[
\Gamma(A \times (0,1]) \cap B \ne \emptyset \quad \forall \Gamma \in \Phi.
\]
\end{definition}

Denote by $H$ the class of homeomorphisms $h$ of $X$ onto itself such that $h$ and $h^{-1}$ map bounded sets into bounded sets. The following proposition was proved in Schechter \cite{MR1636619}.

\begin{proposition}[{\cite[Proposition 2.5]{MR1636619}}] \label{Proposition 2}
If $A$ links $B$ and $h \in H$, then $h(A)$ links $h(B)$.
\end{proposition}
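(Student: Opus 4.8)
The plan is to show directly that if $\Gamma \in \Phi$ is any mapping witnessing a potential failure of $h(A)$ to link $h(B)$, then the conjugated mapping $h^{-1} \comp \Gamma \comp h$ (suitably interpreted, with $h$ applied only in the $X$-variable) belongs to $\Phi$ as well, and hence must meet $A$ by the linking hypothesis; pulling this back by $h$ gives the intersection of $\Gamma(h(A) \times (0,1])$ with $h(B)$. Concretely, given $\Gamma \in \Phi$, define $\widetilde{\Gamma} : X \times [0,1] \to X$ by $\widetilde{\Gamma}(u,t) = h^{-1}\big(\Gamma(h(u),t)\big)$, i.e. $\widetilde{\Gamma}(t) = h^{-1} \comp \Gamma(t) \comp h$. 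The first step is to verify that $\widetilde{\Gamma} \in \Phi$ by checking conditions (i)--(iv) in turn.

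For (i), $\widetilde{\Gamma}(0) = h^{-1} \comp \id \comp h = \id$. For (ii), for $t \in [0,1)$ the map $\widetilde{\Gamma}(t)$ is a composition of homeomorphisms of $X$, hence a homeomorphism, with inverse $h^{-1} \comp \Gamma(t)^{-1} \comp h$; continuity of $(u,t) \mapsto \widetilde{\Gamma}(t)^{-1} u = h^{-1}(\Gamma(t)^{-1}(h(u)))$ on $X \times [0,1)$ follows from continuity of $h$, of $h^{-1}$, and of $(u,t) \mapsto \Gamma(t)^{-1} u$. For (iii), since $\Gamma(1)X$ is a single point $p \in X$, we have $\widetilde{\Gamma}(1)X = h^{-1}(\{p\})$, a single point; and $\widetilde{\Gamma}(t)u = h^{-1}(\Gamma(t)(h(u))) \to h^{-1}(p)$ as $t \to 1$ uniformly on bounded subsets of $X$, because $h$ maps a given bounded set $A_0$ to a bounded set $h(A_0)$ on which $\Gamma(t)(\cdot) \to p$ uniformly, and $h^{-1}$ is (uniformly) continuous on the bounded set $\{p\} \cup \bigcup_{t} \Gamma(t)(h(A_0))$ — here one uses that $\Gamma(t)$ maps the bounded set $h(A_0)$ into a bounded set uniformly for $t$ near $1$, which is exactly what the uniform convergence in (iii) provides. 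For (iv), given a bounded $A_0 \subseteq X$ and $t_0 \in [0,1)$: $h(A_0)$ is bounded, so by (iv) for $\Gamma$ the set $\bigcup_{(u,t) \in h(A_0) \times [0,t_0]} \big(\Gamma(t)u \cup \Gamma(t)^{-1}u\big)$ is bounded, and $h^{-1}$ maps it to a bounded set, giving the required uniform bound for $\widetilde{\Gamma}$.

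Once $\widetilde{\Gamma} \in \Phi$ is established, the linking hypothesis applied to $A$ and $B$ gives a point $a \in A$, $t \in (0,1]$ with $\widetilde{\Gamma}(a,t) \in B$, i.e. $h^{-1}(\Gamma(h(a),t)) \in B$, hence $\Gamma(h(a),t) \in h(B)$ with $h(a) \in h(A)$; since also $h(A) \cap h(B) = \emptyset$ (as $h$ is injective and $A \cap B = \emptyset$), this shows $h(A)$ links $h(B)$. The only genuinely delicate point is property (iii) for $\widetilde{\Gamma}$ — specifically, promoting the uniform convergence $\Gamma(t)u \to p$ on the bounded set $h(A_0)$ through the continuous map $h^{-1}$ to uniform convergence $\widetilde{\Gamma}(t)u \to h^{-1}(p)$ — which requires noting that the relevant images stay in a fixed bounded set on which $h^{-1}$ is uniformly continuous; everything else is a routine composition check.
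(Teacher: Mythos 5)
The paper does not prove this proposition; it quotes it from Schechter's \emph{New linking theorems} \cite{MR1636619}, so there is no in-paper proof to compare against. Your conjugation approach $\widetilde{\Gamma}(t) = h^{-1} \comp \Gamma(t) \comp h$ is the natural and correct one, and the reduction at the end (deformation meeting $B$ implies the original meeting $h(B)$, plus disjointness of $h(A)$ and $h(B)$ from injectivity) is right. Checks (i), (ii), and (iv) go through exactly as you write them.

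The one flaw is in your treatment of (iii). You appeal to $h^{-1}$ being \emph{uniformly continuous on a bounded set}, but in an infinite-dimensional Banach space a continuous map need not be uniformly continuous on a bounded set (bounded sets are not compact), and membership in $H$ gives you only continuity of $h^{-1}$ plus boundedness of images, not uniform continuity. Fortunately you do not need it. Since $\Gamma(t)v \to p := \Gamma(1)X$ uniformly for $v$ in the bounded set $h(A_0)$, for any $\eps' > 0$ there is $\delta' > 0$ with $\Gamma(t)v \in B(p,\eps')$ for all $v \in h(A_0)$ and $t > 1 - \delta'$. Continuity of $h^{-1}$ \emph{at the single point $p$} then lets you choose $\eps'$ so that $h^{-1}(B(p,\eps')) \subset B(h^{-1}(p),\eps)$, yielding $\norm{\widetilde{\Gamma}(t)u - h^{-1}(p)} < \eps$ uniformly for $u \in A_0$ and $t > 1 - \delta'$. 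So the argument is repaired by replacing the uniform-continuity claim with pointwise continuity at $p$; the rest of your proof stands.
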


Let $E$ be a $C^1$-functional on $X$. Recall that a subset $B$ of $X$ is a cone if $tu \in B$ whenever $u \in B$ and $t \ge 0$. We will obtain Theorems \ref{Theorem 4}, \ref{Theorem 5}, and \ref{Theorem 3} as corollaries of the following more general result.

\begin{theorem} \label{Theorem 6}
Let $A$ and $B$ be disjoint nonempty subsets of $X$ such that $A$ links $B$. Assume that $B$ is a cone and there exists $e \in X \setminus B$ with $-e \notin B$ such that
\begin{equation} \label{16}
\sup_{u \in B}\, E(u) < \inf_{u \in A}\, E(u), \qquad \sup_{u \in Q}\, E(u) < \infty,
\end{equation}
where $Q = \set{u + se : u \in B,\, s \ge 0}$. Then
\begin{equation} \label{17}
\inf_{u \in A}\, E(u) \le c := \inf_{h \in \widetilde{H}}\, \sup_{u \in h(Q)}\, E(u) \le \sup_{u \in Q}\, E(u),
\end{equation}
where $\widetilde{H} = \set{h \in H : \restr{h}{B} = \id}$. Moreover, if $E$ satisfies the {\em \PS{c}} condition, then $c$ is a critical value of $E$.
\end{theorem}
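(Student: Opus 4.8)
The plan is to verify that the sets $Q$ and $A$ furnish a genuine linking pair and then run the standard deformation/minimax machinery on the min-max level $c$. First I would establish the two-sided estimate \eqref{17}. The upper bound $c \le \sup_{u \in Q} E(u)$ is immediate by taking $h = \id \in \widetilde H$. For the lower bound, the key observation is that $\partial Q$, the boundary of $Q$ relative to the subspace-like set it spans, consists of $B$ itself together with the ``lateral'' part; since $B$ is a cone and $e \notin B$, $-e \notin B$, one shows that the pair $(A, \partial Q)$ — or more precisely that $A$ links the relative boundary of $Q$ — follows from the hypothesis that $A$ links $B$ by a ``cone'' argument: any $\Gamma \in \Phi$ restricted appropriately must push $A$ across $B$, hence across $Q$'s boundary. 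Then for any $h \in \widetilde H$, since $h$ fixes $B$ pointwise, $h(Q)$ is a set whose ``boundary'' still contains $B$, and a continuity/degree-type argument (Proposition \ref{Proposition 2} together with the definition of linking) forces $h(Q) \cap A \ne \emptyset$; evaluating $E$ at such an intersection point and using $\inf_A E > \sup_B E$ gives $\sup_{u \in h(Q)} E(u) \ge \inf_{u \in A} E(u)$, and taking the infimum over $h$ yields $c \ge \inf_{u \in A} E(u)$.

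Next I would prove that $c$ is a critical value assuming \PS{c}. Suppose not; then by \PS{c} there is no critical point at level $c$, and one invokes a quantitative deformation lemma: for suitable small $\eps > 0$ there is a homeomorphism $\eta$ of $X$ with $\eta = \id$ off $E^{-1}[c - 2\eps, c + 2\eps]$, $\eta(E^{c+\eps}) \subset E^{c - \eps}$, and — crucially — $\eta$ fixing $B$ pointwise. The last property is arranged because, by the first inequality in \eqref{16}, $\sup_B E < \inf_A E \le c$, so choosing $\eps$ small enough that $c - 2\eps > \sup_B E$ guarantees $B \cap E^{-1}[c - 2\eps, c+2\eps] = \emptyset$, hence $\eta|_B = \id$. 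Moreover $\eta \in H$: the deformation is generated by a locally Lipschitz bounded pseudo-gradient flow, which maps bounded sets to bounded sets, as does its inverse. Therefore $\eta \in \widetilde H$. Picking $h \in \widetilde H$ with $\sup_{h(Q)} E < c + \eps$ (possible by definition of $c$) and composing, $\eta \circ h \in \widetilde H$ and $\sup_{(\eta \circ h)(Q)} E \le c - \eps < c$, contradicting the definition of $c$ as an infimum. Hence $c$ is critical. One must also check $c$ is finite and $\ge \inf_A E > -\infty$: finiteness from above is the second hypothesis in \eqref{16}, and from below it is the lower bound just proved.

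The main obstacle I anticipate is the linking step: showing carefully that $A$ links $B$ implies $h(Q) \cap A \ne \emptyset$ for every $h \in \widetilde H$. The subtlety is that $Q$ is a half-infinite set (a cone over $B$ translated by the ray $\R_{\ge 0} e$), so it is unbounded, and the maps $\Gamma \in \Phi$ in the definition of linking have delicate behavior at $t \to 1$ (they collapse $X$ to a point). One needs to build, from a given $h \in \widetilde H$ and the geometry of $Q$, an admissible $\Gamma \in \Phi$ that contracts $h(Q)$ onto $B$ in such a way that the linking property of $A$ with $B$ forces the intersection; this is essentially the argument of Schechter \cite{MR1636619}, and the hypotheses $e \notin B$, $-e \notin B$, and $B$ a cone are exactly what make the relevant contraction well-defined and boundary-preserving. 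The \PS{c} part and the two-sided estimate are then routine given this linking fact.
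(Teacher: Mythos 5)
Your overall architecture is correct — trivial upper bound via $h = \id$, lower bound from an intersection property $h(Q) \cap A \ne \emptyset$ for all $h \in \widetilde H$, and a deformation argument for the critical-value claim — and your deformation step is essentially identical to the paper's: $\sup_B E < c$ lets you shrink $\eps$ so that the first deformation lemma produces $\eta \in \widetilde H$, and composing with a near-optimal $h$ yields a contradiction. That part is fine.

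The genuine gap is the intersection lemma, which is the heart of the theorem and which you acknowledge but do not actually prove. Your sketch describes ``an admissible $\Gamma \in \Phi$ that contracts $h(Q)$ onto $B$,'' plus a ``degree-type'' appeal to the boundary of $Q$; this is vague and, as stated, points in the wrong direction. The paper does something cleaner and more concrete. It first proves a small self-contained lemma: \emph{no subset of $X \setminus Q$ can link $B$}. The witness is the explicit contraction $\Gamma(u,t) = (1-t)u - te$ (which contracts all of $X$ toward $-e$, not toward $B$). If $(1-t)u - te = v \in B$ with $t \in (0,1]$, then $t \ne 1$ because $-e \notin B$, and hence $u = v/(1-t) + te/(1-t)$; since $B$ is a cone, $v/(1-t) \in B$, so $u \in Q$ — the cone hypothesis and both conditions $e \notin B$, $-e \notin B$ are used exactly here. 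Then the intersection lemma follows by contradiction: if $h(Q) \cap A = \emptyset$, then $h^{-1}(A) \subset X \setminus Q$; since $A$ links $B$ and $h^{-1} \in H$ with $h^{-1}(B) = B$, Proposition \ref{Proposition 2} gives that $h^{-1}(A)$ links $B$, contradicting the lemma. Without this (or an equivalent argument), the lower bound $c \ge \inf_A E$ is not established, and the rest of your proof has nothing to stand on. I would suggest you supply the explicit $\Gamma$ and the two small lemmas rather than appealing to boundaries of $Q$ and degree theory, which the definition of linking used here does not directly support.
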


\newpage

First we prove the following lemma.

\begin{lemma} \label{Lemma 2}
Let $B$ be a nonempty cone, let $e \in X \setminus B$ with $-e \notin B$, and let $Q = \{u + se : u \in B,\, s \ge 0\}$. Then no subset of $X \setminus Q$ can link $B$.
\end{lemma}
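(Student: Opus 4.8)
The plan is to suppose, for contradiction, that some set $C \subset X \setminus Q$ links $B$, and to construct an explicit deformation $\Gamma \in \Phi$ that pushes everything off $Q$ in the ``$e$-direction'' while fixing $B$, so that $\Gamma(C \times (0,1])$ misses $B$, contradicting linking. The natural idea is a two-stage map: first a translation-type flow that slides points in the direction $e$, then a contraction to a point. Since $B$ is a cone and $Q = \set{u + se : u \in B,\, s \ge 0}$, the point $-e$ is not in $Q$ (if $-e = u + se$ with $u \in B$, $s \ge 0$, then $u = -(1+s)e$, forcing $-e = u/(1+s) \in B$ because $B$ is a cone, contrary to hypothesis). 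So $-e$ is a safe ``target'' outside $Q$. More importantly, I want the flow to move $Q$ away from $B$: a point $u + se \in Q$ under the translation $x \mapsto x - te$ becomes $u + (s - t)e$, which re-enters $Q$ only while $s - t \ge 0$; once $t > s$ it has the form $u - \sigma e$ with $\sigma > 0$, and such a point lies in $B$ only if $u = (1+\sigma)(\text{something in }B)$... more carefully, $u - \sigma e \in B$ would give (using that $B$ is a cone) a relation forcing $-e \in B$ after rescaling, which is excluded. Thus for $t$ large enough the translated copy of $Q$ is disjoint from $B$.

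The first step is therefore to define, for $t \in [0, 1/2]$, a map $\Gamma(t)x = x - \phi(t)e$ where $\phi : [0,1/2] \to [0,\infty)$ is continuous, $\phi(0) = 0$, and $\phi(1/2)$ is chosen large; however a pure global translation does not fix $B$, so instead I would use a cutoff: let $\Gamma(t)x = x - \phi(t)\, \chi(x)\, e$ where $\chi : X \to [0,1]$ is a locally Lipschitz function with $\chi \equiv 0$ on a neighborhood of $B$ and $\chi \equiv 1$ outside a slightly larger neighborhood, chosen so that $Q \setminus B$ still gets pushed off. The subtlety is that $Q$ touches $B$ (indeed $B \subset Q$), so a cutoff vanishing near $B$ would fail to move the part of $Q$ near $B$. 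The cleaner route, and the one I expect to work, is to shear instead of translate: define $\Gamma(t)x = x + \psi(t)\, \dist{x}{B}\, e$ for $t \in [0,1/2]$ with $\psi$ continuous, $\psi(0) = 0$, $\psi$ increasing; this fixes $B$ pointwise (since $\dist{x}{B} = 0$ there), is a homeomorphism for each $t$ with continuous inverse, satisfies the boundedness requirements (iv), and for any $x = u + se \in Q$ with $x \notin B$ we have $\dist{x}{B} > 0$, so $x$ is moved strictly further in the $e$-direction. One then checks that for $t = 1/2$ the image $\Gamma(1/2)(X \setminus Q)$ — wait, I need to push $Q$ itself, so I should track where a point of $C \subset X \setminus Q$ can go: actually linking requires $\Gamma(C \times (0,1]) \cap B = \emptyset$, so I want to show no point of $C$ ever reaches $B$ under $\Gamma$. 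Since $\Gamma(t)$ fixes $B$ and is a homeomorphism, $\Gamma(t)x \in B \iff x \in B$ for $t \in [0,1/2]$; as $C \cap B = \emptyset$ (because $B \subset Q$ and $C \cap Q = \emptyset$), the image stays off $B$ throughout the first stage. For the second stage $t \in [1/2,1]$, compose with the standard contraction to the point $p := \Gamma(1/2)(-e)$; one must verify $p \notin B$, which holds because $-e \notin B$ and $\Gamma(1/2)$ fixes $B$ so preserves its complement.

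Assembling this: I would define $\Gamma$ piecewise on $[0,1/2]$ and $[1/2,1]$ via the shear and then a straight-line homotopy to the single point $p$ (rescaled from $[1/2,1]$ to $[0,1]$), check continuity and properties (i)–(iv) of $\Phi$ — most of which are routine for a distance-function shear followed by a linear contraction — and conclude $\Gamma(C \times (0,1]) \cap B = \emptyset$, since on $(0,1/2]$ the homeomorphism $\Gamma(t)$ maps the complement of $B$ to itself and $C$ lies there, while on $(1/2,1]$ the images are convex combinations of points $\Gamma(1/2)x$ (for $x \in C$, hence $\notin B$ and in fact we need them to avoid $B$) with $p \notin B$ — here I must be slightly careful that the contraction segment itself avoids $B$, which is where I may need to first push $\Gamma(1/2)(C)$ to a configuration from which the segment to $p$ misses $B$; the cone structure of $B$ together with $\pm e \notin B$ is exactly what guarantees a hyperplane-type separation making this possible. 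The main obstacle, then, is precisely this last point: ensuring the final contraction segment stays clear of the cone $B$, and it is handled by choosing the shear amplitude $\psi(1/2)$ large enough that $\Gamma(1/2)$ maps $C$ into a half-space ``on the $-e$ side'' of $B$ that contains $p$ and misses $B$, using that $B$ is a cone with $-e \notin B$. This contradicts that $C$ links $B$, proving the lemma.
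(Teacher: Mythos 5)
Your proposal contains a genuine gap, and it also misses a far simpler route that reduces the lemma to a two-line computation. The gap is the final claim that, after a sufficiently strong shear, the straight-line contraction from $\Gamma(1/2)(C)$ to $p$ stays off $B$ because $\Gamma(1/2)$ maps $C$ into a ``half-space on the $-e$ side of $B$''. Nothing in the hypotheses yields such a hyperplane separation: here a cone means only that $tu \in B$ whenever $u \in B$ and $t \ge 0$, and $B$ need not be convex. In the intended application (Theorem \ref{Theorem 5}) $B$ is the graph $\set{v + \tau(v) : v \in N}$ of a nonlinear positive homogeneous map, which is generally not convex and admits no separating half-space. Since this separation is precisely what is supposed to keep the contraction segment off $B$, the argument does not close. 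A secondary concern: the shear $x \mapsto x + \psi(t)\, \dist{x}{B}\, e$ can fail to be injective once $\psi(t)\,\norm{e} > 1$, because $\dist{\cdot}{B}$ is $1$-Lipschitz and may decrease at maximal rate along the $e$-direction; so condition (ii) in the definition of $\Phi$ would also need justification.

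The detour stems from a misreading: you discarded the translation $x \mapsto x - te$ because it ``does not fix $B$'', but the definition of linking never requires $\Gamma$ to fix $B$ -- only that $\Gamma(A \times (0,1])$ meet $B$. The paper simply takes the linear contraction $\Gamma(u,t) = (1-t)\,u - te$, which is obviously in $\Phi$, and checks directly that $\Gamma\big((X \setminus Q) \times (0,1]\big) \cap B = \emptyset$: if $(1-t)\,u - te = v \in B$ with $u \notin Q$ and $t \in (0,1]$, then $t \ne 1$ (otherwise $-e = v \in B$, contradicting $-e \notin B$), so $u = v/(1-t) + \big(t/(1-t)\big)\,e$; since $B$ is a cone, $v/(1-t) \in B$, and $t/(1-t) \ge 0$, so $u \in Q$, a contradiction. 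The cone hypothesis is used exactly to absorb the factor $1/(1-t)$ back into $B$, and the structure of $Q$ then does all the work. No shear, no cutoff, and no separation argument are needed.
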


\begin{proof}
We will show that
\begin{equation} \label{18}
\Gamma((X \setminus Q) \times (0,1]) \cap B = \emptyset
\end{equation}
for the mapping $\Gamma \in \Phi$ given by
\[
\Gamma(u,t) = (1 - t)\, u - te, \quad (u,t) \in X \times [0,1].
\]
Suppose \eqref{18} does not hold. Then there exist $(u,t) \in (X \setminus Q) \times (0,1]$ and $v \in B$ such that
\[
(1 - t)\, u - te = v.
\]
Since $-e \notin B$, $t \ne 1$ and
\begin{equation} \label{19}
u = \frac{v}{1 - t} + \frac{te}{1 - t}.
\end{equation}
Since $v \in B$ and $B$ is a cone, $v/(1 - t) \in B$, so the right-hand side of \eqref{19} is in $Q$. This is a contradiction since $u \in X \setminus Q$.
\end{proof}

Next we prove the following intersection lemma.

\begin{lemma} \label{Lemma 3}
Let $A$ and $B$ be disjoint nonempty subsets of $X$ such that $A$ links $B$. Assume that $B$ is a cone, $e \in X \setminus B$ with $-e \notin B$, and let $Q = \set{u + se : u \in B,\, s \ge 0}$. Then
\[
h(Q) \cap A \ne \emptyset \quad \forall h \in \widetilde{H},
\]
where $\widetilde{H} = \set{h \in H : \restr{h}{B} = \id}$.
\end{lemma}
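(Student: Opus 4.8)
The plan is to argue by contradiction: suppose there exists $h \in \widetilde{H}$ with $h(Q) \cap A = \emptyset$. Since $A$ links $B$, Proposition \ref{Proposition 2} tells us that $h(A)$ links $h(B)$; but because $h$ restricts to the identity on $B$ and $B$ is a cone (hence $B \subseteq Q$, so $h = \id$ on $B$), we have $h(B) = B$. Thus $h(A)$ links $B$. The assumption $h(Q) \cap A = \emptyset$ says $A \subseteq X \setminus h(Q)$, equivalently $h^{-1}(A) \subseteq X \setminus Q$. This suggests passing the linking statement through $h^{-1}$ rather than $h$, so that we land in the setting of Lemma \ref{Lemma 2}.

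Concretely, the cleaner route is: from $h(Q) \cap A = \emptyset$ deduce $h^{-1}(A) \cap Q = \emptyset$, i.e. $h^{-1}(A) \subseteq X \setminus Q$. Now apply Proposition \ref{Proposition 2} with the homeomorphism $h^{-1} \in H$ (note $h \in H$ implies $h^{-1} \in H$, and $h^{-1}$ also fixes $B$ pointwise since $h$ does): since $A$ links $B$, it follows that $h^{-1}(A)$ links $h^{-1}(B) = B$. But $h^{-1}(A)$ is a subset of $X \setminus Q$, and by Lemma \ref{Lemma 2} no subset of $X \setminus Q$ can link $B$ (the hypotheses of that lemma — $B$ a nonempty cone, $e \in X \setminus B$, $-e \notin B$ — are exactly what we are given here). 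This is the desired contradiction, so $h(Q) \cap A \neq \emptyset$ for every $h \in \widetilde{H}$.

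The only point requiring a moment's care is the bookkeeping with $h$ versus $h^{-1}$: one must check that $h \in \widetilde{H}$ forces $h^{-1} \in \widetilde{H}$. Membership in $H$ is symmetric in $h$ and $h^{-1}$ by definition (both $h$ and $h^{-1}$ map bounded sets to bounded sets), and if $\restr{h}{B} = \id$ then applying $h^{-1}$ to $h(u) = u$ for $u \in B$ gives $\restr{h^{-1}}{B} = \id$ as well. Also one should confirm $h(B) = B$ (needed implicitly): since $B \subseteq Q$ and $h$ fixes $B$ pointwise, $h(B) = B$, and likewise $h^{-1}(B) = B$. I do not anticipate a genuine obstacle here — the lemma is essentially a formal consequence of Proposition \ref{Proposition 2} and Lemma \ref{Lemma 2}, and the main content has already been isolated into those two statements. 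The only thing to be vigilant about is not to misapply Proposition \ref{Proposition 2} in the wrong direction, which would leave one trying to show $h(A)$ (rather than $h^{-1}(A)$) avoids $Q$, a statement that does not follow from the hypothesis.
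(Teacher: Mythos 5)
Your proof is correct and is essentially the paper's own argument: contradiction via $h^{-1}(A)\subset X\setminus Q$, invoking Proposition \ref{Proposition 2} with $h^{-1}$ (noting $h^{-1}\in\widetilde H$, hence $h^{-1}(B)=B$) to conclude $h^{-1}(A)$ links $B$, which contradicts Lemma \ref{Lemma 2}. The only blemish is the slightly muddled parenthetical in your first paragraph deducing ``$h=\id$ on $B$'' from $B\subseteq Q$ (it is simply the hypothesis $\restr{h}{B}=\id$); your second paragraph gives the clean, correct version.
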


\begin{proof}
Suppose $h(Q) \cap A = \emptyset$ for some $h \in \widetilde{H}$. Then $h^{-1}(A) \subset X \setminus Q$. Since $A$ links $B$ and $h^{-1} \in H$, $h^{-1}(A)$ links $h^{-1}(B)$ by Schechter \cite[Proposition 2.5]{MR1636619}. Since $h \in \widetilde{H}$, $h^{-1} \in \widetilde{H}$ and hence $h^{-1}(B) = B$, so $h^{-1}(A)$ links $B$. This contradicts Lemma \ref{Lemma 2} since $h^{-1}(A) \subset X \setminus Q$.
\end{proof}

We are now ready to prove Theorem \ref{Theorem 6}.

\begin{proof}[Proof of Theorem \ref{Theorem 6}]
Since $h(Q) \cap A \ne \emptyset$ for all $h \in \widetilde{H}$ by Lemma \ref{Lemma 3} and the identity map is in $\widetilde{H}$, \eqref{17} holds. It follows from \eqref{16} and \eqref{17} that
\begin{equation} \label{9}
\sup_{u \in B}\, E(u) < c < \infty.
\end{equation}
Suppose $E$ satisfies the \PS{c} condition, but $c$ is a regular value of $E$. By the first deformation lemma, then there exist $\eps_0 > 0$ and, for each $\eps \in (0,\eps_0)$, an $\eta \in H$ such that
\begin{equation} \label{10}
E(u) < c - 2 \eps \implies \eta(u) = u
\end{equation}
and
\begin{equation} \label{11}
E(u) \le c + \eps \implies E(\eta(u)) \le c - \eps
\end{equation}
(see, e.g., Perera and Schechter \cite[Lemma 1.3.5]{MR3012848}). By \eqref{9} and \eqref{10}, taking $\eps$ sufficiently small, we may assume that $\restr{\eta}{B}$ is the identity and hence $\eta \in \widetilde{H}$. By the definition of $c$, there exists $h \in \widetilde{H}$ such that
\[
\sup_{u \in h(Q)}\, E(u) \le c + \eps.
\]
Then $\widetilde{h} := \eta \comp h \in \widetilde{H}$ and
\[
\sup_{u \in \widetilde{h}(Q)}\, E(u) \le c - \eps
\]
by \eqref{11}, contradicting the definition of $c$.
\end{proof}

Let $X = N \oplus M,\, u = v + w$ be a direct sum decomposition with $N$ finite dimensional and $M$ closed and nontrivial. For $\rho > 0$, let $S_\rho = \set{u \in X : \norm{u} = \rho}$. The following proposition was proved in Schechter \cite{MR1636619}.

\begin{proposition}[{\cite[Corollary 2.7]{MR1636619}}] \label{Proposition 1}
For any $\rho > 0$, $M \cap S_\rho$ links $N$.
\end{proposition}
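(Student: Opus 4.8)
The plan is to verify the linking condition directly from the definition: given an arbitrary $\Gamma \in \Phi$, I must produce a point in $\Gamma((M \cap S_\rho) \times (0,1]) \cap N$. Since $N$ is finite dimensional and $M$ is closed, let $P : X \to N$ be the continuous projection associated with the decomposition $X = N \oplus M$. The idea is to chase the intersection using a degree or Brouwer fixed-point argument on the finite-dimensional ball $D = \{v \in N : \norm{v} \le \rho\}$, whose relative boundary in $N$ (in an equivalent norm adapted to the splitting) plays the role of $N \cap S_\rho$.

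First I would set up the auxiliary map. Fix $\Gamma \in \Phi$ and consider, for each $t \in [0,1]$, the map $F_t : D \to N$ defined roughly by $F_t(v) = P\bigl(\Gamma(t)(v + \gamma(v))\bigr)$, where $v + \gamma(v)$ parametrizes $M \cap S_\rho$ suitably over the disk $D$ via the radial-type correspondence $v \mapsto$ the unique point of $M \cap S_\rho$ projecting onto $v$ scaled appropriately — more carefully, one works with the full sphere $S_\rho$ and the decomposition, using that on $\partial D$ the set $M \cap S_\rho$ meets $N$ only trivially. The key structural facts to invoke are: $\Gamma(0) = \id$, so at $t = 0$ the relevant map is essentially the identity on $N$-components and has nonzero degree on $D$ relative to its boundary; and property (iii), that $\Gamma(1)X$ is a single point, which forces the degree to change or the boundary behavior to be controlled as $t \to 1$. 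The homotopy invariance of the Brouwer degree along $t$, combined with the fact that for $t$ near $1$ the image collapses to a point (hence $F_t$ can be pushed off $0$ or, conversely, the degree computation at $t=1$ contradicts that at $t=0$ unless an intersection occurs at some intermediate $t$), yields a $v \in D$ and $t \in (0,1]$ with $F_t(v) = 0$, i.e. $\Gamma(t)(v + \gamma(v)) \in M$; refining the bookkeeping so that this point actually lies in $N$ rather than merely in $M$ gives the claim. Properties (ii) and (iv) guarantee the continuity and boundedness needed for the degree argument to be legitimate on the compact set $D \times [0,1]$.

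The main obstacle I anticipate is the bookkeeping at the endpoint $t = 1$: because $\Gamma(1)X$ is a single point, the map $F_1$ is constant, so one cannot simply read off a nonzero degree there — instead one must argue that if the intersection $\Gamma((M\cap S_\rho)\times(0,1])\cap N$ were empty, then $0$ would be a regular value avoided by $F_t$ for all $t \in [0,1]$, and the resulting homotopy from the identity-like map $F_0$ (degree $1$ on $D$) to the constant map $F_1$ (degree $0$) through maps never hitting $0$ on the interior would be the contradiction. Making the "identity-like" claim precise — that $F_0$ has the mapping degree of a homeomorphism of the pair $(D, \partial D)$ — requires care with how $M \cap S_\rho$ is coordinatized over $D$ and with the choice of norm, and is exactly the point where the finite-dimensionality of $N$ and the nontriviality of $M$ are used. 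Since this statement is quoted from Schechter \cite{MR1636619} as Corollary 2.7, in the paper itself I would simply cite that reference rather than reproduce the degree argument, but the sketch above is how I would reconstruct it.
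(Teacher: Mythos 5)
Your decision to cite Schechter~\cite{MR1636619} rather than reproduce the argument is exactly what the paper does: Proposition~\ref{Proposition 1} is stated with the reference and no proof is given. The reconstruction you sketch, however, has a flaw at its core. The proposed ``parametrization of $M \cap S_\rho$ over $D \subset N$'' by $v \mapsto v + \gamma(v)$ cannot exist: every point of $M \cap S_\rho$ lies in $M$, so its $N$-component in the splitting $X = N \oplus M$ is zero, and $v + \gamma(v) \in M$ forces $v = 0$. The two sets also need not have comparable dimensions; $M$, and hence $M \cap S_\rho$, is typically infinite-dimensional here, while $D$ is a ball in the finite-dimensional $N$. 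As a consequence, the claim that $F_0$ is ``identity-like with nonzero degree'' fails immediately: for $v + \gamma(v) \in M \cap S_\rho \subset M$ one has $F_0(v) = P_N\bigl(v + \gamma(v)\bigr) \equiv 0$ on all of $D$.

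There is also a misdirection in which projection carries the information. Producing $(w,t)$ with $\Gamma(t)w \in N$ means making the $M$-component $P_M(\Gamma(t)w)$ vanish, not the $N$-component; a zero of $P_N \circ \Gamma(t)$ would give a point of $M$, not of $N$. The natural target for a degree problem is therefore $M$, which may be infinite-dimensional, so Brouwer degree is unavailable there. The finite-dimensional degree picture you have in mind is really the one behind the \emph{dual}, classical Rabinowitz linking, where one maps a bounded set $Q \subset N \oplus \R e$ and detects intersections with $M \cap S_\rho$ by a degree computed on $Q$. The present proposition reverses those roles and needs a different mechanism: when $\dim M = \infty$ the sphere $M \cap S_\rho$ is contractible, so even the cruder route --- compose $w \mapsto P_M\Gamma(t)w$ with the radial retraction and argue that the identity of $M \cap S_\rho$ cannot be null-homotopic --- breaks down, and the proof must lean on the fact that each $\Gamma(t)$ with $t < 1$ is a global homeomorphism of $X$ (together with properties (ii)--(iv) of $\Phi$) rather than on a Brouwer degree in $N$. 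So the right move is what you said at the end: cite Schechter; but the sketched reconstruction would not become a proof.
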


We are now ready to prove Theorems \ref{Theorem 4} and \ref{Theorem 5}.

\begin{proof}[Proof of Theorem \ref{Theorem 4}]
It suffices to show that $A$ links $N$ by Theorem \ref{Theorem 6}. By Proposition \ref{Proposition 1}, $M \cap S_\rho$ links $N$. Since $\theta \in C(M,N)$, the mapping
\[
h : X \to X, \quad v + w \mapsto v + \theta(w) + w,
\]
with the inverse
\[
h^{-1} : X \to X, \quad v + w \mapsto v - \theta(w) + w,
\]
is in $H$. So $h(M \cap S_\rho) = A$ links $h(N) = N$ by Proposition \ref{Proposition 2}.
\end{proof}

\begin{proof}[Proof of Theorem \ref{Theorem 5}]
Since $\tau$ is positive homogeneous, $B$ is a cone, so it suffices to show that $A$ links $B$ by Theorem \ref{Theorem 6}. By Proposition \ref{Proposition 1}, $A$ links $N$. Since $\tau \in C(N,M)$, the mapping
\[
h : X \to X, \quad v + w \mapsto v + \tau(v) + w,
\]
with the inverse
\[
h^{-1} : X \to X, \quad v + w \mapsto v - \tau(v) + w,
\]
is in $H$. So $h(A) = A$ links $h(N) = B$ by Proposition \ref{Proposition 2}.
\end{proof}

\newpage

Finally we prove the following generalization of Theorem \ref{Theorem 4}.

\begin{theorem} \label{Theorem 3}
Let $\theta \in C(M,N)$ be a positive homogeneous map and let $T : N \to X$ be a bounded linear map. If $\norm{I - T}$ is sufficiently small, where $I$ is the identity map on $N$, and there exist $\rho > 0$ and $e \in X \setminus T(N)$ such that
\begin{equation} \label{7}
\sup_{u \in T(N)}\, E(u) < \inf_{u \in A}\, E(u), \qquad \sup_{u \in Q}\, E(u) < \infty,
\end{equation}
where $A = \set{\theta(w) + w : w \in M \cap S_\rho}$ and $Q = \set{u + se : u \in T(N),\, s \ge 0}$, then
\begin{equation} \label{8}
\inf_{u \in A}\, E(u) \le c := \inf_{h \in \widetilde{H}}\, \sup_{u \in h(Q)}\, E(u) \le \sup_{u \in Q}\, E(u),
\end{equation}
where $\widetilde{H} = \{h \in H : \restr{h}{T(N)} = \id\}$. Moreover, if $E$ satisfies the {\em \PS{c}} condition, then $c$ is a critical value of $E$.
\end{theorem}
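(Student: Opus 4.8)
\textbf{Proof proposal for Theorem \ref{Theorem 3}.}

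The plan is to reduce Theorem \ref{Theorem 3} to Theorem \ref{Theorem 6} exactly as Theorem \ref{Theorem 4} was, the only new ingredient being that the role of the linear subspace $N$ is now played by its image $T(N)$ under a linear map close to the identity. First I would observe that $T(N)$ is a cone (indeed a linear subspace, since $T$ is linear), and that $e \notin T(N)$ together with $-e \notin T(N)$ (the latter automatic because $T(N)$ is a subspace), so the hypotheses of Theorem \ref{Theorem 6} on the set $B := T(N)$ and the vector $e$ are in place once we know that $A$ links $T(N)$. Thus everything comes down to the linking statement: $A = \{\theta(w) + w : w \in M \cap S_\rho\}$ links $T(N)$.

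To get this, I would first invoke Proposition \ref{Proposition 1} to know $M \cap S_\rho$ links $N$, then transport this linking by a homeomorphism in $H$. The homeomorphism should be built in two stages. The first stage is the map $h_1 : v + w \mapsto v + \theta(w) + w$ (with inverse $v + w \mapsto v - \theta(w) + w$), which is in $H$ since $\theta \in C(M,N)$ maps bounded sets to bounded sets by positive homogeneity and continuity on the unit sphere; this sends $M \cap S_\rho$ to $A$ and fixes $N$, so by Proposition \ref{Proposition 2}, $A$ links $N$. The second stage is to push $N$ onto $T(N)$ by a homeomorphism of $X$ that restricts to $T$ on $N$. Here is where the smallness of $\norm{I - T}$ enters: I would extend $T$ to all of $X$ by setting $\widehat{T}(v + w) = Tv + w$, so that $\widehat{T} = \id + (T - I)P$ where $P : X \to N$ is the (bounded, since $N$ is finite dimensional and the splitting is topological) projection along $M$. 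Then $\widehat{T} = \id - K$ with $\norm{K} \le \norm{I - T}\,\norm{P}$, which is $< 1$ when $\norm{I-T}$ is small enough; hence $\widehat{T}$ is a linear isomorphism of $X$ with bounded inverse given by the Neumann series, and both $\widehat{T}$ and $\widehat{T}^{-1}$ are bounded linear maps and so lie in $H$. Since $\widehat{T}$ restricts to $T$ on $N$, we get $\widehat{T}(N) = T(N)$, and I must also check $\widehat{T}(A)$ still links $\widehat{T}(N)$ — but Proposition \ref{Proposition 2} gives that $\widehat{T}(A)$ links $\widehat{T}(N) = T(N)$, and then one more application, using that $A$ itself (not $\widehat T(A)$) is what appears in the theorem, forces me to be slightly more careful: I should instead transport directly, noting that $h := \widehat{T} \comp h_1 \in H$ sends $M \cap S_\rho$ to $\widehat T(A)$, which is not literally $A$.

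This last point is the genuine obstacle, and the fix is to absorb $\widehat T$ into the definition rather than after it: replace $h_1$ by $h := $ the map $v + w \mapsto \widehat T(\theta(w)) + w = T(\theta(w)) + w$ composed appropriately — more precisely, take $h : v + w \mapsto Tv + \theta(w) + w$. One checks this is a bijection of $X$ with continuous inverse (solve $Tv + \theta(w) + w = v' + w'$: necessarily $w' = w$ since $\theta(w) \in N$ and $Tv \in N$ while... wait, this needs $T(N) \subseteq N$, which need not hold). So in fact the clean route is: $h$ should be the composition sending $v + w \mapsto v + \theta(w) + w \mapsto \widehat T$ applied to the first coordinate's $v$ only. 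Concretely, let $g : v + w \mapsto v + \theta(w) + w$ and note $g(N) = N$; let $\widehat T$ be as above; then $\widehat T \comp g \in H$ satisfies $\widehat T(g(N)) = \widehat T(N) = T(N)$ and $\widehat T(g(M \cap S_\rho)) = \widehat T(A)$. Since Proposition \ref{Proposition 1} and Proposition \ref{Proposition 2} yield that $\widehat T(A)$ links $T(N)$, and $\widehat T(A) = \{Tv + \theta(w) + w : \ldots\}|_{v=0}$... I realize the correct bookkeeping is simply to redefine the candidate max-min set: since Theorem \ref{Theorem 6} only needs \emph{some} set linking $T(N)$ with the energy inequalities, and those inequalities are stated for the given $A$ and $Q$, the honest statement is that with $\norm{I-T}$ small the set $A$ (as given) links $T(N)$ because $A$ is the image of $M \cap S_\rho$ under $v + w \mapsto \theta(w) + w$ followed by a small perturbation that fixes $M$ pointwise and acts as $\widehat T$ on the $N$-part — and since $\theta(w) + w$ has zero $N$-part modification issue only through $\theta(w)$, applying $\widehat T$ changes $\theta(w)$ to $T\theta(w) + $ (nothing), i.e. one should simply assume without loss that $\theta$ has been replaced by $T \comp \theta$, which is again in $C(M,N')$ where $N' = T(N)$, positive homogeneous. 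Then Theorem \ref{Theorem 4} applied with $N$ replaced by $T(N)$ and $\theta$ by $T\theta$ gives the conclusion verbatim, the only thing to verify being that $T(N)$ is again a finite-dimensional direct summand of $X$ with closed complement — which holds precisely when $\norm{I - T}$ is small, via the Neumann-series argument above showing $\widehat T$ is an isomorphism carrying the splitting $N \oplus M$ to the splitting $T(N) \oplus M$. I expect the write-up to consist of: (1) the Neumann series argument that $\widehat T \in H$ is an isomorphism and $X = T(N) \oplus M$; (2) noting $T\theta \in C(M, T(N))$ is positive homogeneous; (3) citing Theorem \ref{Theorem 4} (or re-running its one-line proof via Propositions \ref{Proposition 1} and \ref{Proposition 2}) with these substitutions; and (4) observing the hypotheses \eqref{7} are exactly hypotheses \eqref{12} after these substitutions. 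The main subtlety — and the only place real work happens — is step (1), quantifying "sufficiently small" as $\norm{I - T} < 1/\norm{P}$ where $P$ is the projection onto $N$ along $M$.
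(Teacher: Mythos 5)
There is a genuine gap in your argument, and you actually brushed against it several times before dismissing it. The central difficulty in Theorem \ref{Theorem 3} is not whether $T(N)$ is a complemented subspace (your Neumann-series argument, or equivalently the paper's Lemma \ref{Lemma 4}, handles that cleanly) but whether the set $A = \{\theta(w)+w : w \in M\cap S_\rho\}$, \emph{as given}, links $T(N)$. Your final plan replaces $\theta$ by $T\theta$ and applies Theorem \ref{Theorem 4} with $N$ replaced by $T(N)$; but then the conclusion you obtain concerns $\widetilde A := \{T\theta(w)+w\}$, not $A$, and your step (4) — that hypothesis \eqref{7} becomes hypothesis \eqref{12} after the substitution — is false: \eqref{7} bounds $E$ from below on $A$, and there is no a priori comparison between $\inf_A E$ and $\inf_{\widetilde A} E$. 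Equivalently, the homeomorphism $\widehat T \comp g \in H$ (your notation) carries $M\cap S_\rho$ to $\widetilde A$ and $N$ to $T(N)$, so Proposition \ref{Proposition 2} yields that $\widetilde A$ links $T(N)$; it does not yield that $A$ links $T(N)$. There is no member of $H$ that both carries $M\cap S_\rho$ to $A$ and carries $N$ to $T(N)$, so the one-line Theorem \ref{Theorem 4} mechanism does not transfer.

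The paper circumvents this by verifying the linking of $A$ and $T(N)$ \emph{directly from the definition}, rather than by a single transport via $H$. Given $\Gamma \in \Phi$, one builds $\widetilde\Gamma \in \Phi$ that on $t\in[0,1/2]$ linearly interpolates $v+w \mapsto v + 2t\,\theta(w)+w$ (so it sweeps $M\cap S_\rho$ through the cone $C_\rho = \{t\theta(w)+w : w\in M\cap S_\rho,\ t\in[0,1]\}$ onto $A$), and on $t\in(1/2,1]$ runs $\Gamma(2t-1)$ from $A$. Since $M\cap S_\rho$ links $T(N)$ (Lemma \ref{Lemma 4}), $\widetilde\Gamma((M\cap S_\rho)\times[0,1])$ must meet $T(N)$; this image is $C_\rho \cup \Gamma(A\times(0,1])$, so the proof reduces to showing $C_\rho \cap T(N) = \emptyset$ when $\norm{I-T}$ is small. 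That last step is a genuine piece of work — it requires showing the radial projection $\pi(C_\rho) = D\cap S$ (with $D=\{t\theta(w)+w : w\in M,\ t\in[0,1]\}$) is closed, hence at positive distance from $N\cap S$, so that a small perturbation $T$ cannot create an intersection. None of this is in your proposal, and it cannot be absorbed into the "replace $\theta$ by $T\theta$" shortcut. In short: your reduction to Theorem \ref{Theorem 4} proves the wrong statement (about $\widetilde A$), and the actual statement about $A$ needs the $C_\rho$-avoidance argument.
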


First we prove the following linking result.

\begin{lemma} \label{Lemma 4}
Let $T : N \to X$ be a bounded linear map. If $\norm{I - T}$ is sufficiently small, where $I$ is the identity map on $N$, then $M \cap S_\rho$ links $T(N)$ for any $\rho > 0$.
\end{lemma}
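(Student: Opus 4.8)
The plan is to reduce the claim to Proposition~\ref{Proposition 1} (which says $M \cap S_\rho$ links $N$) by exhibiting a homeomorphism $g \in H$ that fixes $M \cap S_\rho$ (or at least maps it to a set still linking the image of $N$) and carries $N$ onto $T(N)$, and then invoke Proposition~\ref{Proposition 2}. The natural candidate is the linear map $g = \id[M] \oplus T$ on $X = N \oplus M$, i.e. $v + w \mapsto Tv + w$. Since $T$ is bounded linear, $g$ is bounded linear; the point is that when $\norm{I - T}$ is small, $g$ is invertible. Indeed, writing $g = \id[X] - (I - T)P$ where $P : X \to N$ is the projection along $M$, we have $\norm{\id[X] - g} \le \norm{I - T}\, \norm{P}$, so for $\norm{I - T} < 1/\norm{P}$ the Neumann series gives $g^{-1} \in B(X)$; as a bounded linear bijection, $g$ and $g^{-1}$ map bounded sets to bounded sets, hence $g \in H$. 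Then $g(N) = T(N)$ and, by Proposition~\ref{Proposition 2}, $g(M \cap S_\rho)$ links $T(N)$.

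The remaining issue is that $g(M \cap S_\rho)$ is not literally $M \cap S_\rho$: $g$ fixes $M$ pointwise but distorts the sphere $S_\rho$. So I would not claim $g$ fixes $M \cap S_\rho$; instead I would first apply Proposition~\ref{Proposition 1} at a suitable radius and then push forward. Concretely: $g$ maps $M$ into $M$ (the identity on $M$), so $g(M \cap S_\rho) \subset M$, but its $\norm{\cdot}$-norm is no longer exactly $\rho$. To land back on $M \cap S_\rho$, I would post-compose with the radial renormalization on $M$. Alternatively — and more cleanly — I would argue directly: $g(M \cap S_\rho)$ is a subset of $M$ that is the image of a sphere under a small perturbation of the identity, and one shows it still links $T(N)$ by the homeomorphism-invariance of linking together with the observation that on $M$ the map $g$ is the identity, so $g(M \cap S_\rho) = M \cap S_\rho$ after all. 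Let me be careful here: $g|_M = \id$, so indeed $g(M \cap S_\rho) = M \cap S_\rho$ exactly, because every point of $M \cap S_\rho$ lies in $M$ and is fixed by $g$. Thus no renormalization is needed: $g(M \cap S_\rho) = M \cap S_\rho$ and $g(N) = T(N)$, and Proposition~\ref{Proposition 2} gives the conclusion immediately.

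So the skeleton is: (1) define $g = \id[M] \oplus T$ on $X = N \oplus M$; (2) show $g \in H$ when $\norm{I - T}$ is sufficiently small, via the Neumann-series bound $\norm{\id[X] - g} \le \norm{I - T}\,\norm{P} < 1$ where $P$ is the projection onto $N$ along $M$; (3) observe $g|_M = \id$ hence $g(M \cap S_\rho) = M \cap S_\rho$, while $g(N) = T(N)$; (4) apply Proposition~\ref{Proposition 1} to get $M \cap S_\rho$ links $N$, then Proposition~\ref{Proposition 2} with the homeomorphism $g \in H$ to conclude $M \cap S_\rho$ links $T(N)$. The only genuinely quantitative point is step (2): the smallness of $\norm{I - T}$ must be measured against $\norm{P}$, the norm of the projection determined by the fixed splitting $X = N \oplus M$; this is where "sufficiently small" gets its precise meaning. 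Everything else is a direct appeal to the already-established propositions, so I expect no real obstacle — the lemma is essentially a stability-of-linking statement under small linear perturbations of the finite-dimensional summand.
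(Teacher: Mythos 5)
Your proof is correct, but it takes a genuinely different route from the paper's own. The paper verifies directly that $X = T(N) \oplus M$ remains a valid direct-sum decomposition when $\norm{I - T}$ is small: it shows $P_N T : N \to N$ is invertible (so that $P_N(T(N)) = N$) and uses compactness of $N \cap S$ to get $T(N) \cap M = \set{0}$, and then applies Proposition~\ref{Proposition 1} to this new decomposition. You instead encode the perturbation as a homeomorphism $g$ of $X$ given by $g(v + w) = Tv + w$, justify $g \in H$ via the Neumann-series bound against the projection norm, observe that $g$ fixes $M$ pointwise so that $g(M \cap S_\rho) = M \cap S_\rho$ while $g(N) = T(N)$, and then transport the linking via Proposition~\ref{Proposition 2}. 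The quantitative engine is the same in both arguments — a small linear perturbation of the identity is invertible — but yours routes through invariance of linking under $H$ while the paper's routes through stability of the direct-sum splitting. Your approach is slightly slicker since it sidesteps the separate verification that $T(N) \cap M = \set{0}$; there are no gaps in your argument.
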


\begin{proof}
By Proposition \ref{Proposition 1}, it suffices to show that $X = T(N) \oplus M$. Let $P_N : X \to N,\, u \mapsto v$ and $P_M : X \to M,\, u \mapsto w$ be the projections onto $N$ and $M$, respectively. Since $(I - P_N\, T)\, v = P_N\, (I - T)\, v$ for all $v \in N$,
\[
\norm{I - P_N\, T} = \norm{P_N\, (I - T)} \le \norm{P_N} \norm{I - T} \le \norm{I - T}.
\]
So if $\norm{I - T}$ is sufficiently small, then the mapping $P_N\, T : N \to N$ is invertible and hence
\[
P_N(T(N)) = N.
\]
So given $u = v + w \in N \oplus M$, there exists $z \in T(N)$ such that $v = P_N z$. Since $z = P_N z + P_M z$, then
\begin{equation} \label{3.1}
u = P_N z + w = z + (w - P_M z) \in T(N) \oplus M.
\end{equation}
Denoting by $S$ the unit sphere in $X$, $N \cap S$ is compact since $N$ is finite dimensional, and $M \cap S$ is closed since $M$ is closed, so
\[
\dist{N \cap S}{M \cap S} > 0.
\]
So if $\norm{I - T}$ is sufficiently small, then $T(N) \cap M = \set{0}$ and hence the decomposition in \eqref{3.1} is unique. Hence $X = T(N) \oplus M$.
\end{proof}

\newpage

We are now ready to prove Theorem \ref{Theorem 3}.

\begin{proof}[Proof of Theorem \ref{Theorem 3}]
We will show that $A$ links $T(N)$ if $\norm{I - T}$ is sufficiently small. The desired conclusions will then follow from Theorem \ref{Theorem 6}. Let $\Gamma \in \Phi$. Define $\widetilde{\Gamma} \in \Phi$ by
\[
\widetilde{\Gamma}(u,t) = \begin{cases}
v + 2t \theta(w) + w & \text{if } t \in [0,1/2]\\[7.5pt]
\Gamma(2t - 1)\, (v + \theta(w) + w) & \text{if } t \in (1/2,1]
\end{cases}
\]
for $u = v + w \in N \oplus M$. If $\norm{I - T}$ is sufficiently small, then $M \cap S_\rho$ links $T(N)$ by Lemma \ref{Lemma 4}, so
\begin{equation} \label{3.2}
\widetilde{\Gamma}((M \cap S_\rho) \times [0,1]) \cap T(N) \ne \emptyset.
\end{equation}
We have
\begin{equation} \label{3.3}
\widetilde{\Gamma}((M \cap S_\rho) \times [0,1]) = C_\rho \cup \Gamma(A \times (0,1]),
\end{equation}
where
\[
C_\rho = \set{t \theta(w) + w : w \in M \cap S_\rho,\, t \in [0,1]}.
\]
We will show that if $\norm{I - T}$ is sufficiently small, then
\begin{equation} \label{4}
C_\rho \cap T(N) = \emptyset.
\end{equation}
This together with \eqref{3.2} and \eqref{3.3} will show that
\[
\Gamma(A \times (0,1]) \cap T(N) \ne \emptyset
\]
and complete the proof.

Let $S$ be the unit sphere in $X$ and let $\pi : X \setminus \set{0} \to S,\, u \mapsto u/\norm{u}$ be the radial projection onto $S$. Since $\theta$ is positive homogeneous and $T$ is linear, to show that \eqref{4} holds, it suffices to show that
\begin{equation} \label{5}
\pi(C_\rho) \cap T(N) = \emptyset.
\end{equation}
We will show that $\pi(C_\rho)$ is closed. Since $N$ is finite dimensional, $N \cap S$ is compact, so this will imply that
\[
\dist{\pi(C_\rho)}{N \cap S} > 0
\]
and hence \eqref{5} holds if $\norm{I - T}$ is sufficiently small.

It remains to show that $\pi(C_\rho)$ is closed. Let
\[
D = \set{t \theta(w) + w : w \in M,\, t \in [0,1]}.
\]
Since
\[
\pi(C_\rho) = D \cap S,
\]
it suffices to show that $D$ is closed. Let $\seq{w_j} \subset M,\, \seq{t_j} \subset [0,1]$ be sequences such that
\[
t_j \theta(w_j) + w_j \to u = v + w \in N \oplus M.
\]
Since $M$ and $N$ are closed subspaces, the projection $P_M : X \to M,\, u \mapsto w$ is continuous, so
\[
w_j = P_M(t_j \theta(w_j) + w_j) \to P_M(u) = w.
\]
Since $\theta$ is continuous, then $\theta(w_j) \to \theta(w)$. For a renamed subsequence, $t_j$ converges to some $t \in [0,1]$. Then
\[
t_j \theta(w_j) + w_j \to t \theta(w) + w,
\]
so $u = t \theta(w) + w \in D$ and hence $D$ is closed.
\end{proof}

\section{Abstract existence results} \label{Abs}

In this section we prove two abstract existence results based on Theorems \ref{Theorem 4} and \ref{Theorem 5} in the setting of Section \ref{DF spectrum}. We consider the operator equation
\begin{equation} \label{22}
Au = bu^+ - au^- + f(u), \quad u \in D,
\end{equation}
where $a, b > 0$ and $f \in C(D,H)$ is a potential operator. Let $F \in C^1(D,\R)$ be the potential of $f$ that satisfies $F(0) = 0$, i.e.,
\[
F(u) = \int_0^1 \ip{f(su)}{u} ds, \quad u \in D
\]
(see \cite[Proposition 4.3.2]{MR3012848}). Solutions of equation \eqref{22} coincide with critical points of the $C^1$-functional
\[
E(u) = \half \norm[D]{u}^2 - \frac{a}{2}\, \snorm{u^-}^2 - \frac{b}{2}\, \snorm{u^+}^2 - F(u) = \half\, I(u,a,b) - F(u), \quad u \in D.
\]
We assume that
\begin{enumerate}
\item[$(F_1)$] $F(u) = \o(\norm[D]{u}^2)$ as $\norm[D]{u} \to 0$,
\item[$(F_2)$] $F(u) \ge 0$ for all $u \in D$,
\item[$(F_3)$] there exists $c^\ast > 0$ such that for each $c \in (0,c^\ast)$, every \PS{c} sequence of $E$ has a subsequence that converges weakly to a nontrivial critical point of $E$.
\end{enumerate}

\begin{theorem} \label{Theorem 7}
Assume $(F_1)$--$(F_3)$. If $(a,b) \in Q_l$,
\[
b < \nu_{l-1}(a),
\]
and there exists $e \in X \setminus N_{l-1}$ such that
\begin{equation} \label{20}
\sup_{u \in Q}\, E(u) < c^\ast,
\end{equation}
where $Q = \set{v + se : v \in N_{l-1},\, s \ge 0}$, then equation \eqref{22} has a nontrivial solution.
\end{theorem}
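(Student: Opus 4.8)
The plan is to apply the nonlinear linking theorem, Theorem~\ref{Theorem 4}, in the space $X = D$ with the splitting $D = N_{l-1}\oplus M_{l-1}$ and with $\theta := \theta(\cdot,a,b) \in C(M_{l-1},N_{l-1})$ the positive homogeneous map supplied by Proposition~\ref{Proposition 3}$(i)$, taking $A = \set{\theta(w)+w : w\in M_{l-1}\cap S_\rho}$ for a suitably small $\rho > 0$ (with $S_\rho$ the sphere of radius $\rho$ in $D$) and $Q = \set{v + se : v\in N_{l-1},\, s\ge 0}$ as in the statement. The resulting minimax level $c$ will lie in $(0,c^\ast)$, and a nontrivial solution of \eqref{22} will then be extracted either from Theorem~\ref{Theorem 4} itself or from assumption $(F_3)$.

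First I would check the geometric conditions \eqref{12}. On $N_{l-1}$: since $a, b > \lambda_{l-1}$, inequality \eqref{24} gives $I(v,a,b) = \norm[D]{v}^2 - a\snorm{v^-}^2 - b\snorm{v^+}^2 \le (\lambda_{l-1}-a)\snorm{v^-}^2 + (\lambda_{l-1}-b)\snorm{v^+}^2 \le 0$ for all $v\in N_{l-1}$, so by $(F_2)$ and $F(0)=0$ we get $\sup_{v\in N_{l-1}} E(v) = E(0) = 0$. On $A$: by the positive homogeneity of $\theta$ and the $2$-homogeneity of $I(\cdot,a,b)$, for $w\in M_{l-1}$ with $\norm[D]{w} = \rho$ we have $I(\theta(w)+w,a,b) \ge \rho^2\, n_{l-1}(a,b)$, where $n_{l-1}(a,b) > 0$ because $b < \nu_{l-1}(a)$ places $(a,b)$ strictly below the minimal curve $C_l$ (cf. Section~\ref{DF spectrum} and \eqref{29}). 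A soft coercivity argument — $\theta(\hat w)$ maximizes $v'\mapsto I(v'+\hat w,a,b)$ on $N_{l-1}$, a functional that is anticoercive uniformly for $\hat w$ on the unit sphere of $M_{l-1}$ since $\min\set{a,b} > \lambda_{l-1}$ and $\norm[D]{v}^2 \le \lambda_{l-1}\norm{v}^2$ on $N_{l-1}$ — shows that $\theta$ maps the unit sphere of $M_{l-1}$ into a bounded set, so, using the $\ip[D]{\cdot}{\cdot}$-orthogonality of the decomposition $N_{l-1}\oplus M_{l-1}$, every $u\in A$ obeys $\rho \le \norm[D]{u} \le C\rho$ with $C$ independent of $\rho$. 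Hence $(F_1)$ gives $\sup_{u\in A} F(u) = \o(\rho^2)$ as $\rho\to 0$, and therefore $\inf_{u\in A} E(u) \ge \half\rho^2\, n_{l-1}(a,b) - \o(\rho^2) > 0$ once $\rho$ is small enough; fix such a $\rho$. The first inequality in \eqref{12} now reads $0 = \sup_{v\in N_{l-1}} E(v) < \inf_{u\in A} E(u)$, the second one, $\sup_{u\in Q} E(u) < \infty$, follows at once from \eqref{20}, and $e\in X\setminus N_{l-1}$ by hypothesis. Thus Theorem~\ref{Theorem 4} applies and yields the value $c := \inf_{h\in\widetilde H}\sup_{u\in h(Q)} E(u)$ with $0 < \inf_{u\in A} E(u) \le c \le \sup_{u\in Q} E(u) < c^\ast$.

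It remains to obtain a nontrivial critical point of $E$ at the level $c\in(0,c^\ast)$; since critical points of $E$ are exactly the solutions of \eqref{22}, this finishes the proof. Here I would split into two cases according to whether $E$ satisfies the \PS{c} condition. If it does, the final assertion of Theorem~\ref{Theorem 4} gives that $c$ is a critical value of $E$, and any $u$ with $E(u) = c$ and $E'(u) = 0$ is a nontrivial solution of \eqref{22}, because $E(0) = 0 < c$ forces $u\ne 0$. If $E$ does not satisfy the \PS{c} condition, then by definition there exists a \PS{c} sequence for $E$, and since $0 < c < c^\ast$, assumption $(F_3)$ furnishes a subsequence of it converging weakly to a nontrivial critical point of $E$, again a nontrivial solution of \eqref{22}.

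The step I expect to be the main obstacle is the lower estimate $\inf_A E > 0$. It rests on two points: the strict positivity $n_{l-1}(a,b) > 0$ below the curve $C_l$, which is a genuine property of the Dancer--\Fucik spectrum rather than a formality, and the uniform bound $F(u) = \o(\rho^2)$ on $A$ obtained from $(F_1)$, which in turn depends on the estimate $\norm[D]{u}\le C\rho$ on $A$, i.e. on the boundedness of $\theta$ on the unit sphere of $M_{l-1}$. By contrast, the geometry over $N_{l-1}$, the use of \eqref{20} for the second half of \eqref{12}, and the \PS{c} dichotomy at the end are routine.
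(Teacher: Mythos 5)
Your overall strategy — apply Theorem~\ref{Theorem 4} with the splitting $D = N_{l-1}\oplus M_{l-1}$, show $\sup_{N_{l-1}}E=0$ and $\inf_A E>0$ for small $\rho$, combine with \eqref{20} to get $0<c<c^\ast$, and then run the \PS{c} dichotomy using $(F_3)$ — is the same as the paper's. The boundedness of $\theta$ on the unit sphere of $M_{l-1}$ (hence $\|u\|_D=\O(\rho)$ on $A$) and the derivation of $\sup_{N_{l-1}}E=0$ from \eqref{24}, $(F_2)$, $F(0)=0$ are fine and match the paper. The \PS{c} dichotomy at the end is also fine.

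The gap is in the step you yourself singled out as the main obstacle: the positivity of $\inf_A E$. You take $\theta=\theta(\cdot,a,b)$ and claim $n_{l-1}(a,b)>0$ because $b<\nu_{l-1}(a)$, citing \eqref{29}. But \eqref{29} defines $\nu_{l-1}(a)$ as a supremum of the set where $n_{l-1}(a,\cdot)\ge 0$, and the paper only records that $n_{l-1}$ is \emph{nonincreasing} (not strictly decreasing) in $b$. These facts alone give $n_{l-1}(a,b)\ge 0$ when $b<\nu_{l-1}(a)$, not $n_{l-1}(a,b)>0$; a priori $n_{l-1}(a,\cdot)$ could vanish on a subinterval ending at $\nu_{l-1}(a)$. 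Strict positivity would require either a strict-monotonicity statement for $n_{l-1}$ in $b$, or an argument that the infimum defining $n_{l-1}$ is attained (so that $n_{l-1}(a,b)=0$ would force $(a,b)\in\Sigma(A)$, contradicting $b<\nu_{l-1}(a)$). Neither is supplied, and the infimum in \eqref{27} is over the non-compact set $M_{l-1}\cap S$, so attainability is itself a nontrivial point. With only $n_{l-1}(a,b)\ge 0$ in hand, your bound $\inf_A E\ge \tfrac12\rho^2\,n_{l-1}(a,b)-\o(\rho^2)$ does not give strict positivity.

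The paper sidesteps this entirely. Since $b<\nu_{l-1}(a)$ and $\nu_{l-1}$ is continuous, one can choose $\delta>0$ small with $(a/(1-\delta),b/(1-\delta))\in Q_l$ and $b/(1-\delta)\le\nu_{l-1}(a/(1-\delta))$, hence $n_{l-1}(a/(1-\delta),b/(1-\delta))\ge 0$. Taking $\theta=\theta(\cdot,a/(1-\delta),b/(1-\delta))$ (which is perfectly legitimate in Theorem~\ref{Theorem 4} — any positive homogeneous map in $C(M_{l-1},N_{l-1})$ will do), one writes, for $u=\theta(w)+w$ with $w\in M_{l-1}\cap S_\rho$,
\[
E(u)=\frac{\delta}{2}\norm[D]{u}^2+\frac{1-\delta}{2}\,I\!\left(u,\tfrac{a}{1-\delta},\tfrac{b}{1-\delta}\right)-F(u)\ge\frac{\delta}{2}\norm[D]{u}^2+\o(\norm[D]{u}^2),
\]
so positivity on $A$ comes from the explicit $\tfrac{\delta}{2}\norm[D]{u}^2$ term and needs only $n_{l-1}(\cdots)\ge 0$. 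You should replace your assertion $n_{l-1}(a,b)>0$ with this $\delta$-shift argument (and use $\theta(\cdot,a/(1-\delta),b/(1-\delta))$ in place of $\theta(\cdot,a,b)$ throughout), or else supply a proof of strict positivity of $n_{l-1}(a,b)$ below the curve.
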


\begin{proof}
Since $b < \nu_{l-1}(a)$ and $\nu_{l-1}$ is continuous,
\[
b/(1 - \delta) \le \nu_{l-1}(a/(1 - \delta))
\]
if $\delta \in (0,1 - \max \set{a,b}/\lambda_{l+1})$ is sufficiently small. Then
\[
n_{l-1}(a/(1 - \delta),b/(1 - \delta)) \ge 0
\]
(see \eqref{29}) and hence
\begin{equation} \label{26}
I(\theta(w,a/(1 - \delta),b/(1 - \delta)) + w,a/(1 - \delta),b/(1 - \delta)) \ge 0 \quad \forall w \in M_{l-1}
\end{equation}
(see \eqref{27}). For $\rho > 0$ and $w \in M_{l-1} \cap S_\rho$, set $u = \theta(w,a/(1 - \delta),b/(1 - \delta)) + w$. Then
\[
E(u) = \frac{\delta}{2} \norm[D]{u}^2 + \frac{1 - \delta}{2} \left(\norm[D]{u}^2 - \frac{a}{1 - \delta}\, \snorm{u^-}^2 - \frac{b}{1 - \delta}\, \snorm{u^+}^2\right) - F(u),
\]
and the quantity inside the parentheses is equal to $I(u,a/(1 - \delta),b/(1 - \delta))$ and hence nonnegative by \eqref{26}, so
\[
E(u) \ge \frac{\delta}{2} \norm[D]{u}^2 + \o(\norm[D]{u}^2) \text{ as } \norm[D]{u} \to 0
\]
in view of $(F_1)$. Since
\[
\norm[D]{u}^2 = \norm[D]{\theta(w,a/(1 - \delta),b/(1 - \delta))}^2 + \norm[D]{w}^2
\]
and $\norm[D]{\theta(w,a/(1 - \delta),b/(1 - \delta))} = \O(\norm[D]{w})$ by positive homogeneity (see \cite[Proposition 4.3.1]{MR3012848}), it follows that
\[
E(u) \ge \frac{\delta}{2}\, \rho^2 + \o(\rho^2) \text{ as } \rho \to 0.
\]
So if $\rho > 0$ is sufficiently small,
\begin{equation} \label{31}
\inf_{u \in A}\, E(u) > 0,
\end{equation}
where $A = \set{\theta(w,a/(1 - \delta),b/(1 - \delta)) + w : w \in M_{l-1} \cap S_\rho}$.

Now we apply Theorem \ref{Theorem 4} with $M = M_{l-1}$, $N = N_{l-1}$, and $\theta = \theta(\cdot,a/(1 - \delta),b/(1 - \delta))$. Since $(a,b) \in Q_l$, $a, b > \lambda_{l-1}$ and hence
\begin{equation} \label{51}
I(v,a,b) \le \norm[D]{v}^2 - \lambda_{l-1} \left(\snorm{v^+}^2 + \snorm{v^-}^2\right) = \norm[D]{v}^2 - \lambda_{l-1} \norm{v}^2 \le 0 \quad \forall v \in N_{l-1}
\end{equation}
by \eqref{23} and \eqref{24}. This together with $(F_2)$ and the fact that $E(0) = 0$ implies
\[
\sup_{v \in N_{l-1}}\, E(v) = 0.
\]
Together with \eqref{31}, this gives the first inequality in \eqref{12}. The second inequality also holds by \eqref{20}, so \eqref{13} together with \eqref{31} and \eqref{20} gives
\[
0 < c := \inf_{h \in \widetilde{H}}\, \sup_{u \in h(Q)}\, E(u) < c^\ast,
\]
where $\widetilde{H} = \{h \in H : \restr{h}{N_{l-1}} = \id\}$. If $E$ satisfies the \PS{c} condition, then Theorem \ref{Theorem 4} gives a critical point of $E$ at the level $c$, which is nontrivial since $c > 0$. If, on the other hand, $E$ does not satisfy the \PS{c} condition, then $E$ has a \PS{c} sequence without a convergent subsequence, which then has a subsequence that converges weakly to a nontrivial critical point of $E$ by $(F_3)$.
\end{proof}

\begin{theorem} \label{Theorem 8}
Assume $(F_1)$--$(F_3)$ and let $B = \set{v + \tau(v,a,b) : v \in N_l}$. If $(a,b) \in Q_l$,
\[
b \ge \mu_l(a),
\]
and there exists $e \in X \setminus B$ with $-e \notin B$ such that
\begin{equation} \label{21}
\sup_{u \in Q}\, E(u) < c^\ast,
\end{equation}
where $Q = \set{u + se : u \in B,\, s \ge 0}$, then equation \eqref{22} has a nontrivial solution.
\end{theorem}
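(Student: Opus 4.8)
The plan is to run the argument for Theorem~\ref{Theorem 7} in its ``dual'' form: invoke Theorem~\ref{Theorem 5} in place of Theorem~\ref{Theorem 4}, and use the orthogonal splitting $D = N_l \oplus M_l$ together with the map $\tau = \tau(\cdot,a,b)$ in place of $D = N_{l-1} \oplus M_{l-1}$ and $\theta$. Set $N = N_l$, $M = M_l$. Then $B = \set{v + \tau(v) : v \in N}$ is a cone, because $\tau$ is positive homogeneous, and it coincides with the set in the statement, while $Q = \set{u + se : u \in B,\, s \ge 0}$ is the corresponding cone over $B$. The goal is to verify the two hypotheses \eqref{14} of Theorem~\ref{Theorem 5} with $A = M \cap S_\rho$ for $\rho > 0$ small and with the $e$ given in the statement (which, by assumption, satisfies $e \notin B$ and $-e \notin B$), and then to conclude exactly as at the end of the proof of Theorem~\ref{Theorem 7}.

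First I would bound $E$ from above on $B$. From $b \ge \mu_l(a)$, the definition \eqref{30} of $\mu_l$ and the fact that $m_l(a,\cdot)$ is nonincreasing give $m_l(a,b) \le 0$, so by \eqref{28} $I(v + \tau(v,a,b),a,b) \le 0$ for every $v \in N_l$ with $\norm[D]{v} = 1$; since $\tau$ is positive homogeneous and $I(tu,a,b) = t^2 I(u,a,b)$ for $t \ge 0$, the same inequality holds for all $v \in N_l$. Hence, for $u = v + \tau(v,a,b) \in B$,
\[
E(u) = \half\, I(u,a,b) - F(u) \le 0
\]
by $(F_2)$, with equality at $u = 0$, so $\sup_{u \in B}\, E(u) = 0$.

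Next I would bound $E$ from below on $A = M_l \cap S_\rho$. For $w \in M_l$, inequality \eqref{25} and the bound $\max \set{a,b} < \lambda_{l+1}$ (valid since $(a,b) \in Q_l$) give
\[
I(w,a,b) = \norm[D]{w}^2 - a\, \snorm{w^-}^2 - b\, \snorm{w^+}^2 \ge \left(1 - \frac{\max \set{a,b}}{\lambda_{l+1}}\right) \norm[D]{w}^2,
\]
and the coefficient $\eps_0 := 1 - \max \set{a,b}/\lambda_{l+1}$ is a positive constant. Therefore, by $(F_1)$,
\[
E(w) = \half\, I(w,a,b) - F(w) \ge \frac{\eps_0}{2}\, \rho^2 + \o(\rho^2) \quad \text{as } \rho \to 0
\]
for $w \in M_l \cap S_\rho$, so $\inf_{w \in A}\, E(w) > 0$ once $\rho$ is small enough. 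Together with $\sup_{u \in B}\, E(u) = 0$ this gives the first inequality in \eqref{14}, and the second follows from \eqref{21} because $c^\ast < \infty$. Note that, unlike in Theorem~\ref{Theorem 7}, there is no need to pass to $a/(1 - \delta),\, b/(1 - \delta)$ here, since the quadratic form $I(\cdot,a,b)$ is already coercive on $M_l$.

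Theorem~\ref{Theorem 5} then applies and yields
\[
0 < \inf_{w \in A}\, E(w) \le c := \inf_{h \in \widetilde{H}}\, \sup_{u \in h(Q)}\, E(u) \le \sup_{u \in Q}\, E(u) < c^\ast,
\]
with $\widetilde{H} = \set{h \in H : \restr{h}{B} = \id}$, so $c \in (0,c^\ast)$. If $E$ satisfies \PS{c}, Theorem~\ref{Theorem 5} produces a critical point of $E$ at level $c$, nontrivial because $c > 0$; otherwise $E$ has a \PS{c} sequence with no convergent subsequence, and $(F_3)$ provides a subsequence converging weakly to a nontrivial critical point of $E$. In either case equation \eqref{22} has a nontrivial solution. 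The only step that requires any thought (the rest being a line-by-line transcription of the proof of Theorem~\ref{Theorem 7}) is the implication $b \ge \mu_l(a) \Rightarrow m_l(a,b) \le 0$ --- equivalently, that $\mu_l(a)$ itself belongs to $\set{b \in (\lambda_{l-1},\lambda_{l+1}) : m_l(a,b) \le 0}$ --- which follows from the continuity of $m_l(a,\cdot)$ (pass to the limit along a sequence realizing the infimum in \eqref{30}) together with the fact that $m_l(a,\cdot)$ is nonincreasing.
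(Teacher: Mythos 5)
Your proposal is correct and follows essentially the same route as the paper's own proof: invoke Theorem~\ref{Theorem 5} with $N = N_l$, $M = M_l$, $\tau = \tau(\cdot,a,b)$, use $b \ge \mu_l(a) \Rightarrow m_l(a,b) \le 0$ and $(F_2)$ to get $\sup_B E = 0$, use \eqref{23}, \eqref{25}, and $(F_1)$ to get $\inf_{M_l \cap S_\rho} E > 0$ for small $\rho$, and then pass through $(F_3)$ when \PS{c} fails. The only differences are cosmetic: you phrase the coercivity constant as $1 - \max\set{a,b}/\lambda_{l+1}$ rather than $\delta/\lambda_{l+1}$, and you spell out the continuity/monotonicity argument behind $m_l(a,\mu_l(a)) \le 0$, which the paper leaves implicit.
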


\begin{proof}
We apply Theorem \ref{Theorem 5} with $N = N_l$, $M = M_l$, and $\tau = \tau(\cdot,a,b)$. Since $b \ge \mu_l(a)$,
\[
m_l(a,b) \le 0
\]
(see \eqref{30}) and hence
\begin{equation} \label{52}
I(v + \tau(v,a,b),a,b) \le 0 \quad \forall v \in N_l
\end{equation}
(see \eqref{28}). This together with $(F_2)$ and the fact that $E(0) = 0$ implies
\begin{equation} \label{32}
\sup_{u \in B}\, E(u) = 0.
\end{equation}
Since $(a,b) \in Q_l$, $a, b \le \lambda_{l+1} - \delta$ if $\delta > 0$ is sufficiently small. Then
\[
I(w,a,b) \ge \norm[D]{w}^2 - (\lambda_{l+1} - \delta) \norm{w}^2 \ge \frac{\delta}{\lambda_{l+1}} \norm[D]{w}^2 \quad \forall w \in M_l
\]
by \eqref{23} and \eqref{25}, so
\[
E(w) \ge \frac{\delta}{2 \lambda_{l+1}} \norm[D]{w}^2 + \o(\norm[D]{w}^2) \text{ as } \norm[D]{w} \to 0
\]
in view of $(F_1)$. So if $\rho > 0$ is sufficiently small,
\begin{equation} \label{33}
\inf_{w \in A}\, E(w) > 0,
\end{equation}
where $A = M_l \cap S_\rho$. Together with \eqref{32}, this gives the first inequality in \eqref{14}. The second inequality also holds by \eqref{21}, so \eqref{15} together with \eqref{33} and \eqref{21} gives
\[
0 < c := \inf_{h \in \widetilde{H}}\, \sup_{u \in h(Q)}\, E(u) < c^\ast,
\]
where $\widetilde{H} = \set{h \in H : \restr{h}{B} = \id}$. If $E$ satisfies the \PS{c} condition, then Theorem \ref{Theorem 5} gives a critical point of $E$ at the level $c$, which is nontrivial since $c > 0$. If, on the other hand, $E$ does not satisfy the \PS{c} condition, then $E$ has a \PS{c} sequence without a convergent subsequence, which then has a subsequence that converges weakly to a nontrivial critical point of $E$ by $(F_3)$.
\end{proof}

\section{Proofs of Theorems \ref{Theorem 1} and \ref{Theorem 2}} \label{T1&2}

In this section we prove Theorems \ref{Theorem 1} and \ref{Theorem 2}. First we prove Theorem \ref{Theorem 1} for $N \ge 5$ and Theorem \ref{Theorem 2} using Theorems \ref{Theorem 7} and \ref{Theorem 8}, respectively. Then we prove Theorem \ref{Theorem 1} for $N = 4$ using Theorem \ref{Theorem 3}. Since $u^\pm = (-u)^\mp$, $u$ solves \eqref{3} (resp. \eqref{1}) if and only if $-u$ solves \eqref{3} (resp. \eqref{1}) with $a$ and $b$ interchanged. So $\Sigma(- \Delta)$ is symmetric about the line $a = b$ and we may assume without loss of generality that $a \le b$.

Problem \eqref{1} fits into the abstract setting of Sections \ref{DF spectrum} and \ref{Abs} with $H = L^2(\Omega)$, $p(u) = u^+$, $n(u) = - u^-$, $D = H^1_0(\Omega)$, and $A$ equal to the inverse of the solution operator $L^2(\Omega) \to H^1_0(\Omega),\, g \mapsto u = (- \Delta)^{-1} g$ of the problem
\[
\left\{\begin{aligned}
- \Delta u & = g(x) && \text{in } \Omega\\[10pt]
u & = 0 && \text{on } \bdry{\Omega}.
\end{aligned}\right.
\]
Since the embedding $H^1_0(\Omega) \hookrightarrow L^2(\Omega)$ is compact, $A^{-1}$ is compact on $L^2(\Omega)$. The potential
\[
F(u) = \frac{1}{2^\ast} \int_\Omega |u|^{2^\ast} dx, \quad u \in H^1_0(\Omega)
\]
clearly satisfies $(F_1)$ and $(F_2)$. It is also well-known that the associated variational functional
\[
E(u) = \half \int_\Omega |\nabla u|^2\, dx - \half \int_\Omega \left[a\, (u^-)^2 + b\, (u^+)^2\right] dx - \frac{1}{2^\ast} \int_\Omega |u|^{2^\ast} dx, \quad u \in H^1_0(\Omega)
\]
satisfies $(F_3)$ with
\[
c^\ast = \frac{1}{N}\, S_N^{N/2},
\]
where
\begin{equation} \label{34}
S_N = \inf_{u \in H^1_0(\Omega) \setminus \set{0}}\, \frac{\dint_\Omega |\nabla u|^2\, dx}{\left(\dint_\Omega |u|^{2^\ast} dx\right)^{2/2^\ast}}
\end{equation}
is the best Sobolev constant (see Gazzola and Ruf \cite[Lemma 1]{MR1441856}).

Recall that the infimum in \eqref{34} is attained on the functions
\[
u_\eps(x) = c_N \left(\frac{\eps}{\eps^2 + |x|^2}\right)^{(N-2)/2}, \quad \eps > 0,
\]
where $c_N = [N(N - 2)]^{(N-2)/4}$ (see Talenti \cite{MR0463908}). Fix $x_0 \in \Omega$ and $\mu_0 > 1/\dist{x_0}{\bdry{\Omega}}$. Let $\xi : [0,\infty) \to [0,1]$ be a smooth function such that $\xi(s) = 1$ for $s \le 1/4$ and $\xi(s) = 0$ for $s \ge 1/2$. Set
\[
u_{\eps,\,\mu}(x) = \xi(\mu\, |x - x_0|)\, u_\eps(x - x_0), \quad \eps > 0,\, \mu \ge \mu_0.
\]
We will apply Theorems \ref{Theorem 7}, \ref{Theorem 8}, and \ref{Theorem 3} taking $e = u_{\eps,\,\mu}$ with $\eps > 0$ sufficiently small and $\mu \ge \mu_0$ sufficiently large. We have the estimates
\begin{gather}
\label{37} \int_\Omega |\nabla u_{\eps,\,\mu}|^2\, dx \le S_N^{N/2} + c_1\, (\mu \eps)^{N-2},\\[15pt]
\label{45} \int_\Omega u_{\eps,\,\mu}^{2^\ast}\, dx \ge S_N^{N/2} - c_2\, (\mu \eps)^N,\\[15pt]
\label{38} \int_\Omega u_{\eps,\,\mu}^2\, dx \ge \begin{cases}
c_3\, \eps^2 - c_4\, \mu^{N-4}\, \eps^{N-2} & \text{if } N \ge 5\\[7.5pt]
c_3\, \eps^2 \abs{\log\, (\mu \eps)} - c_4\, \eps^2 & \text{if } N = 4,
\end{cases}\\[15pt]
\label{35} \int_\Omega u_{\eps,\,\mu}\, dx \le c_5\, \mu^{-2}\, \eps^{(N-2)/2},\\[15pt]
\label{36} \int_\Omega u_{\eps,\,\mu}^{2^\ast - 1}\, dx \le c_6\, \eps^{(N-2)/2}
\end{gather}
for some constants $c_1, \dots, c_6 > 0$ (estimates \eqref{37}--\eqref{38} can be found in Degiovanni and Lancelotti \cite{MR2514055}, and \eqref{35} and \eqref{36} are easily verified).

\subsection{Proofs of Theorem \ref{Theorem 1} for $\mathbf{N \ge 5}$ and Theorem \ref{Theorem 2}}

For $N \ge 5$, $(N + 2)/N < (N - 2)/2$. Fix
\begin{equation} \label{46}
\frac{N + 2}{N} < \beta < \frac{N - 2}{2}.
\end{equation}
Let $S = \set{u \in H^1_0(\Omega) : \norm{u} = 1}$.

\begin{lemma} \label{Lemma 5}
Let $K$ be a subset of $S \cap C^2(\Omega)$ such that
\begin{equation} \label{44}
\sup_{u \in K}\, \norm[C^2(\closure{B_{1/\mu_0}(x_0)})]{u} < \infty.
\end{equation}
Then there exist constants $c_7, \dots, c_{15} > 0$ such that for all $\eps > 0$, $\mu \ge \mu_0$, $u \in K$, and $s, t \ge 0$,
\begin{equation} \label{39}
\int_\Omega |\nabla (tu + su_{\eps,\,\mu})|^2\, dx \le \left(1 + c_7\, \mu^{-(N+2)}\right) t^2 + \left(S_N^{N/2} + c_8\, (\mu \eps)^{N-2}\right) s^2,
\end{equation}
\begin{multline} \label{40}
\int_\Omega |tu + su_{\eps,\,\mu}|^{2^\ast} dx \ge \left(\int_\Omega |u|^{2^\ast} dx - c_9\, \mu^{-N} - c_{10}\, \eps^{N\, [1 - 2 \beta/(N-2)]}\right) t^{2^\ast}\\[7.5pt]
+ \left(S_N^{N/2} - c_{11}\, (\mu \eps)^N - c_{12}\, \eps^{2N \beta/(N+2)}\right) s^{2^\ast},
\end{multline}
\begin{multline} \label{41}
\int_\Omega \left[a \left((tu + su_{\eps,\,\mu})^-\right)^2 + b \left((tu + su_{\eps,\,\mu})^+\right)^2\right] dx \ge \bigg(\int_\Omega \left[a\, (u^-)^2 + b\, (u^+)^2\right] dx\\[7.5pt]
- c_{13}\, \mu^{-4}\bigg)\, t^2 + \Big(c_{14}\, \eps^2 - c_{15}\, \mu^{N-4}\, \eps^{N-2}\Big)\, s^2.
\end{multline}
In particular,
\begin{multline*}
E(tu + su_{\eps,\,\mu}) \le \half\, \Big(I(u,a,b) + c_{16}\, \mu^{-4}\Big)\, t^2 - \frac{1}{2^\ast}\, \bigg(\int_\Omega |u|^{2^\ast} dx - c_9\, \mu^{-N}\\[7.5pt]
- c_{10}\, \eps^{N\, [1 - 2 \beta/(N-2)]}\bigg)\, t^{2^\ast} + \half \left(S_N^{N/2} - c_{14}\, \eps^2 + c_{17}\, (\mu \eps)^{N-2}\right) s^2 - \frac{1}{2^\ast}\, \Big(S_N^{N/2} - c_{11}\, (\mu \eps)^N\\[7.5pt]
- c_{12}\, \eps^{2N \beta/(N+2)}\Big)\, s^{2^\ast}
\end{multline*}
for some constants $c_{16}, c_{17} > 0$.
\end{lemma}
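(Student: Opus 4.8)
The plan is to establish the three inequalities \eqref{39}, \eqref{40}, \eqref{41} separately, and then to obtain the ``in particular'' bound for $E(tu+su_{\eps,\mu})$ by pure algebra: substitute \eqref{39}--\eqref{41} into the definition of $E$, collect the coefficients of $t^2$, $t^{2^\ast}$, $s^2$, $s^{2^\ast}$, use that $\int_\Omega|\nabla u|^2\,dx=1$ (so the $t^2$-coefficient becomes $I(u,a,b)$ up to error), and absorb the lower-order powers $c_7\,\mu^{-(N+2)}$ into $c_{16}\,\mu^{-4}$ and $c_{15}\,\mu^{N-4}\eps^{N-2}$ into $c_{17}\,(\mu\eps)^{N-2}$ --- legitimate because $\mu\ge\mu_0$. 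The one structural fact used throughout is that $u_{\eps,\mu}$ is supported in $\closure{B_{1/(2\mu)}(x_0)}\subseteq\closure{B_{1/\mu_0}(x_0)}$, a fixed compact subset of $\Omega$ on which $u$, $\nabla u$, $\Delta u$ are bounded uniformly in $u\in K$ by \eqref{44}; this is precisely what converts the $C^2$ hypothesis into the explicit constants $c_7,\dots,c_{15}$.

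For \eqref{39} I expand $\int_\Omega|\nabla(tu+su_{\eps,\mu})|^2 = t^2\int_\Omega|\nabla u|^2 + 2ts\int_\Omega\nabla u\cdot\nabla u_{\eps,\mu} + s^2\int_\Omega|\nabla u_{\eps,\mu}|^2$, insert $\int_\Omega|\nabla u|^2=1$ and \eqref{37}, integrate the middle term by parts as $\int_\Omega(-\Delta u)\,u_{\eps,\mu}$ (legitimate since $u_{\eps,\mu}\in H^1_0(\Omega)$ is supported where $u\in C^2$), bound it by $\norm[L^\infty]{\Delta u}\int_\Omega u_{\eps,\mu}\le C\mu^{-2}\eps^{(N-2)/2}$ via \eqref{35}, and apply Young's inequality to $2ts$ times this with the weight chosen so that the resulting $t^2$-coefficient is exactly $\mu^{-(N+2)}$, which forces the $s^2$-error to be of order $(\mu\eps)^{N-2}$. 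For \eqref{41} I split $\Omega$ at $\operatorname{supp} u_{\eps,\mu}$: off the support the integrand is $t^2[a(u^-)^2+b(u^+)^2]$, recovering $t^2\int_\Omega[a(u^-)^2+b(u^+)^2]$ minus a contribution that is $\O(t^2\mu^{-N})=\O(t^2\mu^{-4})$ since $|\operatorname{supp} u_{\eps,\mu}|=\O(\mu^{-N})$; on the support I bound the integrand below by $\min\{a,b\}\,(tu+su_{\eps,\mu})^2$, expand the square, keep $s^2\int_\Omega u_{\eps,\mu}^2$ estimated from below by \eqref{38}, and dominate $2ts\int_\Omega u\,u_{\eps,\mu}$ with \eqref{35} and Young's inequality with weight $\mu^{-4}$, producing an absorbable $\mu^{-4}t^2$ term and a $C\mu^{N-4}\eps^{N-2}s^2$ term.

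The heart of the matter is \eqref{40}. I will begin from the elementary inequality
\[
|x+y|^{2^\ast}\ \ge\ |x|^{2^\ast}+|y|^{2^\ast}-C\big(|x|^{2^\ast-1}|y|+|x|\,|y|^{2^\ast-1}\big),\qquad x,y\in\R,
\]
valid with a dimensional constant $C$ (it follows from $\big||1+r|^{2^\ast}-1-|r|^{2^\ast}\big|\le C(|r|+|r|^{2^\ast-1})$ by homogeneity), applied pointwise with $x=tu$, $y=su_{\eps,\mu}$. Integrating and bounding $\int_\Omega u_{\eps,\mu}^{2^\ast}$ from below by \eqref{45} reduces everything to the two cross integrals. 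For $t^{2^\ast-1}s\int_\Omega|u|^{2^\ast-1}u_{\eps,\mu}$ I use that $|u|^{2^\ast-1}$ is bounded on $\operatorname{supp} u_{\eps,\mu}$ together with \eqref{35}, then Young's inequality $t^{2^\ast-1}s\le\gamma\,t^{2^\ast}+C\gamma^{-(2^\ast-1)}s^{2^\ast}$ with $\gamma$ chosen so that this term is $\le c_9\,\mu^{-N}t^{2^\ast}+C(\mu\eps)^N s^{2^\ast}$. For $ts^{2^\ast-1}\int_\Omega|u|\,u_{\eps,\mu}^{2^\ast-1}$ I use $|u|$ bounded together with \eqref{36} to get $\le C\,ts^{2^\ast-1}\eps^{(N-2)/2}$, then Young's inequality $ts^{2^\ast-1}\le\gamma'\,s^{2^\ast}+C(\gamma')^{-(2^\ast-1)}t^{2^\ast}$ with the single choice $\gamma'=\eps^{\,2N\beta/(N+2)-(N-2)/2}$; since $2^\ast-1=(N+2)/(N-2)$, a short computation gives an $s^{2^\ast}$-error of order exactly $\eps^{\,2N\beta/(N+2)}$ and a $t^{2^\ast}$-error of order exactly $\eps^{\,N[1-2\beta/(N-2)]}$ --- this is where the two anomalous exponents in \eqref{40} originate. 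Collecting the terms completes \eqref{40}, and hence the lemma.

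I expect the main obstacle to be bookkeeping rather than any conceptual point: in \eqref{40} the Young weights must be tuned so the errors land precisely on the stated exponents, and one must track which powers of $\mu$ absorb into which, always using $\mu\ge\mu_0$. Note that the particular range $(N+2)/N<\beta<(N-2)/2$ fixed in \eqref{46} is irrelevant to the \emph{validity} of the estimates above; it is needed only later, to ensure $\eps^{\,N[1-2\beta/(N-2)]}\to0$ and $\eps^{\,2N\beta/(N+2)}=\o(\eps^2)$ as $\eps\to0$, which is what the subsequent use of Lemma \ref{Lemma 5} requires.
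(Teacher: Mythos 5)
Your proposal is correct and follows essentially the same route as the paper: expand, integrate by parts, invoke the $C^2$ bound on the fixed support $\closure{B_{1/\mu_0}(x_0)}$ together with estimates \eqref{37}--\eqref{36}, apply the same elementary inequality for $|x+y|^{2^\ast}$, and tune the weighted Young inequalities so the errors land on the stated exponents (your $\gamma'$ is algebraically equivalent to the paper's $\eps^{(N-2)/2-\beta}\cdot\eps^\beta$ split). The treatment of \eqref{41} via $a\le b$ and the $\O(\mu^{-N})$ subtraction, as well as the final absorptions using $\mu\ge\mu_0$ (and $N\ge 5$), also match the paper's argument.
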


\begin{proof}
We have
\begin{equation} \label{42}
\int_\Omega |\nabla (tu + su_{\eps,\,\mu})|^2\, dx = t^2 + 2ts \int_\Omega \nabla u \cdot \nabla u_{\eps,\,\mu}\, dx + s^2 \int_\Omega |\nabla u_{\eps,\,\mu}|^2\, dx
\end{equation}
since $u \in S$. Since $u_{\eps,\,\mu} = 0$ on $\bdry{\Omega}$,
\begin{equation} \label{43}
\int_\Omega \nabla u \cdot \nabla u_{\eps,\,\mu}\, dx = - \int_\Omega u_{\eps,\,\mu}\, \Delta u\, dx \le c_7\, \mu^{-2}\, \eps^{(N-2)/2}
\end{equation}
for some constant $c_7 > 0$ by \eqref{44} and \eqref{35}. Combining \eqref{42} with \eqref{43} and \eqref{37}, and noting that
\[
2ts \mu^{-2}\, \eps^{(N-2)/2} = 2 \mu^{-(N+2)/2}\, t\, (\mu \eps)^{(N-2)/2}\, s \le \mu^{-(N+2)}\, t^2 + (\mu \eps)^{N-2}\, s^2,
\]
gives \eqref{39} for some constant $c_8 > 0$.

The elementary inequality
\[
|y + z|^p \ge |y|^p + |z|^p - p\, (p - 1)\, 2^{p-2} \left(|y|^{p-1}\, |z| + |y|\, |z|^{p-1}\right) \quad \forall y, z \in \R,\, p > 2
\]
together with \eqref{45}, \eqref{44}, \eqref{35}, and \eqref{36} gives
\begin{multline*}
\int_\Omega |tu + su_{\eps,\,\mu}|^{2^\ast} dx \ge t^{2^\ast} \int_\Omega |u|^{2^\ast} dx + s^{2^\ast} \int_\Omega u_{\eps,\,\mu}^{2^\ast}\, dx\\[7.5pt]
- 2^\ast (2^\ast - 1)\, 2^{2^\ast - 2}\, \bigg(t^{2^\ast - 1} s \int_\Omega |u|^{2^\ast - 1}\, u_{\eps,\,\mu}\, dx + ts^{2^\ast - 1} \int_\Omega |u|\, u_{\eps,\,\mu}^{2^\ast - 1}\, dx\bigg) \ge t^{2^\ast} \int_\Omega |u|^{2^\ast} dx\\[7.5pt]
+ s^{2^\ast} \left(S_N^{N/2} - c_2\, (\mu \eps)^N\right) - c_9\, \Big(t^{2^\ast - 1} s \mu^{-2}\, \eps^{(N-2)/2} + ts^{2^\ast - 1} \eps^{(N-2)/2}\Big)
\end{multline*}
for some constant $c_9 > 0$. Since
\[
t^{2^\ast - 1} s \mu^{-2}\, \eps^{(N-2)/2} = \mu^{-(N+2)/2}\, t^{2^\ast - 1}\, (\mu \eps)^{(N-2)/2}\, s \le \left(1 - \frac{1}{2^\ast}\right) \mu^{-N} t^{2^\ast} + \frac{1}{2^\ast}\, (\mu \eps)^N s^{2^\ast}
\]
and
\[
ts^{2^\ast - 1}\, \eps^{(N-2)/2} = \eps^{(N-2)/2 - \beta}\, t \eps^\beta\, s^{2^\ast - 1} \le \frac{1}{2^\ast}\, \eps^{N\, [1 - 2 \beta/(N-2)]}\, t^{2^\ast} + \left(1 - \frac{1}{2^\ast}\right) \eps^{2N \beta/(N+2)}\, s^{2^\ast}
\]
by Young's inequality, \eqref{40} follows.

Since $u_{\eps,\,\mu} = 0$ outside $B_{1/\mu}(x_0)$,
\begin{multline*}
\int_\Omega \left[a \left((tu + su_{\eps,\,\mu})^-\right)^2 + b \left((tu + su_{\eps,\,\mu})^+\right)^2\right] dx = t^2 \int_{\Omega \setminus B_{1/\mu}(x_0)} \left[a\, (u^-)^2 + b\, (u^+)^2\right] dx\\[7.5pt]
+ \int_{B_{1/\mu}(x_0)} \left[a \left((tu + su_{\eps,\,\mu})^-\right)^2 + b \left((tu + su_{\eps,\,\mu})^+\right)^2\right] dx = t^2 \int_\Omega \left[a\, (u^-)^2 + b\, (u^+)^2\right] dx\\[7.5pt]
+ \int_{B_{1/\mu}(x_0)} \left[a \left((tu + su_{\eps,\,\mu})^-\right)^2 + b \left((tu + su_{\eps,\,\mu})^+\right)^2 - a\, (tu^-)^2 - b\, (tu^+)^2\right] dx.
\end{multline*}
Since $a \le b$, the last integral is greater than or equal to
\begin{multline*}
\int_{B_{1/\mu}(x_0)} \left[a\, (tu + su_{\eps,\,\mu})^2 - b\, (tu)^2\right] dx = - (b - a)\, t^2 \int_{B_{1/\mu}(x_0)} u^2\, dx + 2ats \int_{B_{1/\mu}(x_0)}\! u u_{\eps,\,\mu}\, dx\\[7.5pt]
+ as^2 \int_{B_{1/\mu}(x_0)} u_{\eps,\,\mu}^2\, dx \ge - c_{13}\, t^2\, \mu^{-N} - 2c_{14}\, ts \mu^{-2}\, \eps^{(N-2)/2} + as^2\, \Big(c_3\, \eps^2 - c_4\, \mu^{N-4}\, \eps^{N-2}\Big)
\end{multline*}
for some constants $c_{13}, c_{14} > 0$ by \eqref{44}, \eqref{35}, and \eqref{38}. Since
\[
2ts \mu^{-2}\, \eps^{(N-2)/2} \le \mu^{-4}\, t^2 + \eps^{N-2}\, s^2
\]
and $N \ge 5$, \eqref{41} follows.
\end{proof}

Noting that $1/N < 1 - 2/(N - 2)$ for $N \ge 5$, now we fix
\begin{equation} \label{50}
\frac{1}{N} < \gamma < 1 - \frac{2}{N - 2}
\end{equation}
and take $\mu = \eps^{- \gamma}$. Then $(\mu \eps)^{N-2} = \eps^{(N-2)(1 - \gamma)}$ and $(N - 2)(1 - \gamma) > 2$ by \eqref{50}, so it follows from Lemma \ref{Lemma 5} that for all small $\eps > 0$, $u \in K$, and $s, t \ge 0$,
\begin{multline} \label{47}
E(tu + su_{\eps,\,\eps^{- \gamma}}) \le \half\, \Big(I(u,a,b) + c_{16}\, \eps^{4 \gamma}\Big)\, t^2 - \frac{1}{2^\ast}\, \bigg(\int_\Omega |u|^{2^\ast} dx - c_9\, \eps^{N \gamma}\\[7.5pt]
- c_{10}\, \eps^{N\, [1 - 2 \beta/(N-2)]}\bigg)\, t^{2^\ast} + \half \left(S_N^{N/2} - c_{18}\, \eps^2\right) s^2 - \frac{1}{2^\ast}\, \Big(S_N^{N/2} - c_{11}\, \eps^{N (1 - \gamma)}\\[7.5pt]
- c_{12}\, \eps^{2N \beta/(N+2)}\Big)\, s^{2^\ast}
\end{multline}
for some constant $c_{18} > 0$. The next lemma is crucial.

\begin{lemma} \label{Lemma 6}
Let $K$ be a subset of $S \cap C^2(\Omega)$ such that \eqref{44} holds and
\begin{equation} \label{48}
I(u,a,b) \le 0 \quad \forall u \in K.
\end{equation}
Then
\[
\sup_{u \in K,\, s, t \ge 0}\, E(tu + su_{\eps,\,\eps^{- \gamma}}) < \frac{1}{N}\, S_N^{N/2}
\]
for all sufficiently small $\eps > 0$.
\end{lemma}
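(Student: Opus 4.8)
\textbf{Proof plan for Lemma \ref{Lemma 6}.} The plan is to split the estimate \eqref{47} into its $t$-part and its $s$-part and bound each supremum separately, using $c^\ast = S_N^{N/2}/N$ as the target. For the $s$-variable, note that the function $s \mapsto \half P s^2 - \frac{1}{2^\ast} R\, s^{2^\ast}$ with $P, R > 0$ attains its maximum at $s = (P/R)^{(N-2)/4}$ with value $\frac{1}{N}\, P^{N/2}/R^{(N-2)/2}$; I would apply this with $P = S_N^{N/2} - c_{18}\, \eps^2$ and $R = S_N^{N/2} - c_{11}\, \eps^{N(1-\gamma)} - c_{12}\, \eps^{2N\beta/(N+2)}$. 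The point is that $P$ is decreased by a term of order $\eps^2$, while $R$ is decreased only by terms of order $\eps^{N(1-\gamma)}$ and $\eps^{2N\beta/(N+2)}$; by the choices \eqref{46} and \eqref{50}, both of these exponents exceed $2$ (the bound $\beta > (N+2)/N$ gives $2N\beta/(N+2) > 2$, and $\gamma < 1 - 2/(N-2)$ gives $N(1-\gamma) > 2N/(N-2) > 2$). Hence the ratio $P^{N/2}/R^{(N-2)/2}$ is $S_N^{N/2} - c\,\eps^2 + \o(\eps^2)$ for some $c > 0$, so the $s$-part of the supremum is at most $\frac{1}{N}\, S_N^{N/2} - c'\eps^2$ for small $\eps$, strictly below $c^\ast$.

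For the $t$-variable, the relevant function is $g(t) = \half Q\, t^2 - \frac{1}{2^\ast} R'\, t^{2^\ast}$ with $Q = I(u,a,b) + c_{16}\,\eps^{4\gamma}$ and $R' = \int_\Omega |u|^{2^\ast}\, dx - c_9\,\eps^{N\gamma} - c_{10}\,\eps^{N[1 - 2\beta/(N-2)]}$. By hypothesis \eqref{48}, $Q \le c_{16}\,\eps^{4\gamma}$, which is small and (for small $\eps$) positive, while $R'$ is bounded below by a positive constant uniformly over $u \in K$: indeed $\int_\Omega |u|^{2^\ast}\, dx \ge S_N^{-2^\ast/2}\,\norm[D]{u}^{2^\ast} / (\text{const})$ is bounded below since $\norm[D]{u}$ is bounded below on $K$ (note $u \in S$ means $\norm{u} = 1$, so $\norm[D]{u}^2 \ge \lambda_1$), and the correction terms $\eps^{N\gamma}$, $\eps^{N[1-2\beta/(N-2)]}$ tend to $0$. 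Therefore $\sup_{t \ge 0} g(t) = \frac{1}{N}\, Q^{N/2}/(R')^{(N-2)/2} \le C\,\eps^{2N\gamma}$ for some constant $C$, which tends to $0$. Adding the two parts, $E(tu + su_{\eps,\eps^{-\gamma}}) \le \frac{1}{N}\, S_N^{N/2} - c'\eps^2 + C\,\eps^{2N\gamma}$; since $2N\gamma > 2$ by \eqref{50}, the error term $C\,\eps^{2N\gamma}$ is negligible compared with $c'\eps^2$, and the whole expression is strictly less than $\frac{1}{N}\, S_N^{N/2}$ for all sufficiently small $\eps > 0$. All estimates are uniform in $u \in K$ because \eqref{44} makes the constants $c_7, \dots, c_{18}$ in Lemma \ref{Lemma 5} uniform and because the lower bound on $\int_\Omega |u|^{2^\ast}\, dx$ is uniform.

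The main obstacle is bookkeeping the exponents to confirm that every $\eps$-power introduced by the cutoff/truncation corrections is strictly larger than $\eps^2$, so that the genuine gain $c_{18}\,\eps^2$ in the $s$-part (coming from estimate \eqref{38} on $\int u_{\eps,\mu}^2$) is not eaten up. This is exactly why $\beta$ and $\gamma$ are constrained by the open intervals in \eqref{46} and \eqref{50}: one must check $2N\beta/(N+2) > 2$, $N(1-\gamma) > 2$, $N\gamma > 2$ wait --- actually only $2N\gamma > 2$ i.e. $N\gamma > 1$, which is the left endpoint of \eqref{50}, and $4\gamma$ versus nothing critical since $Q^{N/2}$ already carries a high power. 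Once these elementary inequalities between exponents are verified, the conclusion follows by combining the two one-variable maximizations; no compactness or deformation argument is needed here, as this lemma is purely a computation feeding the hypothesis \eqref{20} of Theorem \ref{Theorem 7}.
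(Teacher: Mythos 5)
Your overall plan is the same as the paper's: apply \eqref{47}, maximize the $s$-part and the $t$-part separately as one-variable problems of the form $\half P s^2 - \frac{1}{2^\ast} R s^{2^\ast}$, and then verify that all the $\eps$-exponents introduced by the truncation corrections exceed $2$ so that the genuine gain $c_{18}\,\eps^2$ in the $s$-part survives. The exponent bookkeeping in your last paragraph is correct.

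However, there is a genuine error in your justification for the uniform lower bound on $\int_\Omega |u|^{2^\ast}\,dx$ over $u \in K$, which is needed to make the coefficient $R'$ (equivalently, the $c_{23}$ in the paper) uniformly positive. You write
$\int_\Omega |u|^{2^\ast}\,dx \ge S_N^{-2^\ast/2}\,\norm[D]{u}^{2^\ast}/(\text{const})$,
but the Sobolev inequality \eqref{34} goes the other way: it reads $\norm[D]{u}^2 \ge S_N\bigl(\int_\Omega |u|^{2^\ast}\,dx\bigr)^{2/2^\ast}$, i.e.
$\int_\Omega |u|^{2^\ast}\,dx \le S_N^{-N/(N-2)}\,\norm[D]{u}^{2^\ast}$,
an \emph{upper} bound. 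The normalization $\norm[D]{u}=1$ (note $S$ here is the unit sphere in $D = H^1_0(\Omega)$) therefore gives you nothing in the direction you need, and without some additional input a function with $\norm[D]{u}=1$ can in principle have $\int_\Omega |u|^{2^\ast}\,dx$ arbitrarily small. This is precisely where hypothesis \eqref{48} does double duty: the paper uses $I(u,a,b)\le 0$ together with $\norm[D]{u}=1$ and $a \le b$ to get $1 \le b\int_\Omega u^2\,dx$, and then the H\"older inequality $\int_\Omega u^2\,dx \le \vol{\Omega}^{2/N}\bigl(\int_\Omega |u|^{2^\ast}\,dx\bigr)^{2/2^\ast}$ yields the desired lower bound $\int_\Omega |u|^{2^\ast}\,dx \ge \bigl(b\,\vol{\Omega}^{2/N}\bigr)^{-N/(N-2)}$, uniformly on $K$. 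You only invoke \eqref{48} to bound $Q$; you need it again here. With this fix your argument goes through and coincides with the paper's.
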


\begin{proof}
For $u \in K$, \eqref{48} together with the assumption $a \le b$ and the H\"{o}lder inequality gives
\[
1 \le \int_\Omega \left[a\, (u^-)^2 + b\, (u^+)^2\right] dx \le b \int_\Omega u^2\, dx \le b \vol{\Omega}^{2/N} \left(\int_\Omega |u|^{2^\ast} dx\right)^{2/2^\ast},
\]
so
\begin{equation} \label{49}
\inf_{u \in K}\, \int_\Omega |u|^{2^\ast} dx > 0.
\end{equation}
It follows from \eqref{47}--\eqref{49} that for all sufficiently small $\eps > 0$, $u \in K$, and $s, t \ge 0$,
\begin{multline*}
E(tu + su_{\eps,\,\eps^{- \gamma}}) \le \left[\half \left(1 - c_{19}\, \eps^2\right) s^2 - \frac{1}{2^\ast}\, \big(1 - c_{20}\, \eps^{N (1 - \gamma)} - c_{21}\, \eps^{2N \beta/(N+2)}\big)\, s^{2^\ast}\right] S_N^{N/2}\\[7.5pt]
+ c_{22}\, \eps^{4 \gamma}\, t^2 - c_{23}\, t^{2^\ast}
\end{multline*}
for some constants $c_{19}, \dots, c_{23} > 0$. Maximizing the right-hand side over all $s, t \ge 0$ then gives
\[
E(tu + su_{\eps,\,\eps^{- \gamma}}) \le \frac{1}{N}\, \frac{\left(1 - c_{19}\, \eps^2\right)^{N/2}}{\left(1 - c_{20}\, \eps^{N (1 - \gamma)} - c_{21}\, \eps^{2N \beta/(N+2)}\right)^{(N-2)/2}}\, S_N^{N/2} + c_{24}\, \eps^{2N \gamma}
\]
for some constant $c_{24} > 0$. Since $2N \beta/(N + 2) > 2$ by \eqref{46}, and $N\, (1 - \gamma) > 2$ and $2N \gamma > 2 $ by \eqref{50}, the desired conclusion follows.
\end{proof}

We are now ready to prove Theorem \ref{Theorem 1} for $N \ge 5$ and Theorem \ref{Theorem 2}.

\begin{proof}[Proof of Theorem \ref{Theorem 1} for $N \ge 5$]
We apply Theorem \ref{Theorem 7} taking $e = u_{\eps,\,\eps^{- \gamma}}$ with $\eps > 0$ sufficiently small. Functions $u_{\eps,\,\eps^{- \gamma}}$ with different centers $x_0$ and sufficiently small $\eps$ have disjoint supports and are therefore linearly independent. Since $N_{l-1}$ is finite dimensional, it follows that $x_0 \in \Omega$ can be chosen so that $u_{\eps,\,\eps^{- \gamma}} \notin N_{l-1}$.

To verify \eqref{20}, we use Lemma \ref{Lemma 6} with $K = S \cap N_{l-1}$. As in the proof of Theorem \ref{Theorem 7}, \eqref{48} holds (see \eqref{51}). By the interior regularity of eigenfunctions, $N_{l-1} \subset C^2(\Omega)$. So the restrictions of functions in $N_{l-1}$ to $\closure{B_{1/\mu_0}(x_0)}$ form a finite dimensional subspace of $C^2(\closure{B_{1/\mu_0}(x_0)})$. Since the restrictions of functions in $K$ to $\closure{B_{1/\mu_0}(x_0)}$ is a subset of this subspace and is bounded in the $H^1$-norm, \eqref{44} follows.
\end{proof}

\begin{proof}[Proof of Theorem \ref{Theorem 2}]
We apply Theorem \ref{Theorem 8} taking $e = u_{\eps,\,\eps^{- \gamma}}$ with $\eps > 0$ sufficiently small. Gradients $I'(u_{\eps,\,\eps^{- \gamma}},a,b)$ with different centers $x_0$ and sufficiently small $\eps$ have disjoint supports and are therefore linearly independent. By Proposition \ref{Proposition 3}, $I'(v + \tau(v,a,b),a,b) \perp M_l$ for all $v \in N_l$ and hence $\set{I'(v + \tau(v,a,b),a,b) : v \in N_l}$ is a subset of $N_l$. Since $N_l$ is finite dimensional, it follows that $x_0 \in \Omega$ can be chosen so that $\pm u_{\eps,\,\eps^{- \gamma}} \notin B := \{v + \tau(v,a,b) : v \in N_l\}$.

To verify \eqref{21}, we use Lemma \ref{Lemma 6} with $K = S \cap B$. As in the proof of Theorem \ref{Theorem 8}, \eqref{48} holds (see \eqref{52}). It only remains to show that \eqref{44} holds. Let $u \in K$. By Proposition \ref{Proposition 3}, $I'(u,a,b) = z_u$ for some $z_u \in N_l$. Then
\begin{equation} \label{54}
\int_\Omega \nabla u \cdot \nabla \zeta\, dx - \int_\Omega \left(bu^+ - au^-\right) \zeta\, dx = \int_\Omega \nabla z_u \cdot \nabla \zeta\, dx \quad \forall \zeta \in H^1_0(\Omega),
\end{equation}
so $u$ is a weak solution of
\[
\left\{\begin{aligned}
- \Delta u & = bu^+ - au^- - \Delta z_u && \text{in } \Omega\\[10pt]
u & = 0 && \text{on } \bdry{\Omega}.
\end{aligned}\right.
\]
Testing \eqref{54} with $z_u$ gives
\[
\norm{z_u}^2 \le \norm{u} \norm{z_u} + \left(a\, |u^-|_2 + b\, |u^+|_2\right) \pnorm[2]{z_u},
\]
and since $K \subset S$, this implies that $\set{z_u : u \in K}$ is bounded in $H^1_0(\Omega)$. By the interior regularity of eigenfunctions, $N_l \subset C^{2,\alpha}(\Omega)$. So the restrictions of functions in $N_l$ to $\closure{B_{1/\mu_0}(x_0)}$ form a finite dimensional subspace of $C^{2,\alpha}(\closure{B_{1/\mu_0}(x_0)})$. Since the restrictions of functions in the set $\set{z_u : u \in K}$ to $\closure{B_{1/\mu_0}(x_0)}$ is a subset of this subspace and is bounded in the $H^1$-norm, it follows that this set is also bounded in $C^{2,\alpha}(\closure{B_{1/\mu_0}(x_0)})$. Now it follows from standard arguments in elliptic regularity theory that $K$ is bounded in $C^{2,\alpha}(\closure{B_{1/\mu_0}(x_0)})$.
\end{proof}

\subsection{Proof of Theorem \ref{Theorem 1} for $\mathbf{N = 4}$}

Let $\eta : [0,\infty) \to [0,1]$ be a smooth function such that $\eta(s) = 0$ for $s \le 3/4$, $\eta(s) = 1$ for $s \ge 1$, and $|\eta'(s)| \le 5$ for all $s$. Set
\[
v_\mu(x) = \eta(\mu\, |x - x_0|)\, v(x), \quad v \in N_{l-1},\, \mu \ge \mu_0.
\]
We will apply Theorem \ref{Theorem 3} taking $T$ to be the bounded linear map from $N_{l-1}$ to $H^1_0(\Omega)$ given by $Tv = v_\mu$ with $\mu \ge \mu_0$ sufficiently large. The critical exponent $2^\ast = 4$ now and the estimates \eqref{37}--\eqref{38} reduce to
\begin{gather}
\label{60} \int_\Omega |\nabla u_{\eps,\,\mu}|^2\, dx \le S_4^2 + c_1\, (\mu \eps)^2,\\[15pt]
\int_\Omega u_{\eps,\,\mu}^4\, dx \ge S_4^2 - c_2\, (\mu \eps)^4,\\[15pt]
\label{62} \int_\Omega u_{\eps,\,\mu}^2\, dx \ge c_3\, \eps^2 \abs{\log\, (\mu \eps)} - c_4\, \eps^2.
\end{gather}

\begin{lemma} \label{Lemma 7}
There exist constants $c_{25}, \dots, c_{28} > 0$ such that for all $\mu \ge \mu_0$ and $v \in N_{l-1} \cap S$,
\begin{gather}
\label{56} \int_\Omega |\nabla (v_\mu - v)|^2\, dx \le c_{25}\, \mu^{-2},\\[15pt]
\label{57} \int_\Omega |\nabla v_\mu|^2\, dx \le \int_\Omega |\nabla v|^2\, dx + c_{26}\, \mu^{-2},\\[15pt]
\int_\Omega v_\mu^4\, dx \ge \int_\Omega v^4\, dx - c_{27}\, \mu^{-4},\\[15pt]
\label{59} \int_\Omega \left[a\, (v_\mu^-)^2 + b\, (v_\mu^+)^2\right] dx \ge \int_\Omega \left[a\, (v^-)^2 + b\, (v^+)^2\right] dx - c_{28}\, \mu^{-4}.
\end{gather}
In particular, for all $t \ge 0$,
\[
E(tv_\mu) \le \half\, \Big(I(v,a,b) + c_{29}\, \mu^{-2}\Big)\, t^2 - \frac{1}{4} \left(\int_\Omega v^4\, dx - c_{27}\, \mu^{-4}\right) t^4
\]
for some constant $c_{29} > 0$.
\end{lemma}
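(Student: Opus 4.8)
The plan is to exploit that $v_\mu-v$ vanishes outside $B_{1/\mu}(x_0)$, where it equals $(\eta(\mu|x-x_0|)-1)\,v(x)$, and that on $N_{l-1}\cap S$ every quantity appearing below is controlled by a single $C^2$-bound on a fixed ball. Since $\mu_0>1/\dist{x_0}{\bdry{\Omega}}$, the closed ball $\closure{B_{1/\mu_0}(x_0)}$ is contained in $\Omega$, and by interior elliptic regularity $N_{l-1}\subset C^2(\Omega)$. As $N_{l-1}$ is finite dimensional, $N_{l-1}\cap S$ is compact and the restriction map $v\mapsto\restr{v}{\closure{B_{1/\mu_0}(x_0)}}$ (linear, of finite rank, hence continuous) shows that $C_0:=\sup_{v\in N_{l-1}\cap S}\norm[C^2(\closure{B_{1/\mu_0}(x_0)})]{v}<\infty$. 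For $\mu\ge\mu_0$ one has $B_{1/\mu}(x_0)\subset B_{1/\mu_0}(x_0)$ with $\vol{B_{1/\mu}(x_0)}=\O(\mu^{-4})$ since $N=4$; moreover $v_\mu=v$ outside $B_{1/\mu}(x_0)$, $v_\mu=0$ on $B_{3/(4\mu)}(x_0)$, $0\le\eta\le1$ gives $\abs{v_\mu}\le\abs{v}$, and $\eta\ge0$ gives $v_\mu^\pm=\eta(\mu|x-x_0|)\,v^\pm$.

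I would then prove the four estimates in turn; they are all routine given this setup. For \eqref{56}, on $B_{1/\mu}(x_0)$ one has $\nabla(v_\mu-v)=(\eta(\mu|x-x_0|)-1)\,\nabla v+\mu\,\eta'(\mu|x-x_0|)\,\frac{x-x_0}{|x-x_0|}\,v$, whose modulus is $\le C_0(1+5\mu)=\O(\mu)$ by $\abs{\eta'}\le5$ and $\mu\ge\mu_0$; squaring and integrating over $B_{1/\mu}(x_0)$ gives $\int_\Omega\abs{\nabla(v_\mu-v)}^2\,dx=\O(\mu^2)\,\O(\mu^{-4})=c_{25}\,\mu^{-2}$. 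Then \eqref{57} follows by expanding $\int_\Omega\abs{\nabla v_\mu}^2\,dx=\int_\Omega\abs{\nabla v}^2\,dx+2\int_\Omega\nabla v\cdot\nabla(v_\mu-v)\,dx+\int_\Omega\abs{\nabla(v_\mu-v)}^2\,dx$: the last term is $\le c_{25}\,\mu^{-2}$ by \eqref{56}, and the cross term, by Cauchy--Schwarz together with $\int_{B_{1/\mu}(x_0)}\abs{\nabla v}^2\,dx\le C_0^2\,\O(\mu^{-4})$ and \eqref{56}, is $\O(\mu^{-3})=\O(\mu^{-2})$. For the $L^4$ estimate, $v_\mu^4=v^4$ outside $B_{1/\mu}(x_0)$ and $0\le v_\mu^4\le v^4$ on it, so $\int_\Omega v_\mu^4\,dx\ge\int_\Omega v^4\,dx-\int_{B_{1/\mu}(x_0)}v^4\,dx\ge\int_\Omega v^4\,dx-c_{27}\,\mu^{-4}$. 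Finally $a(v_\mu^-)^2+b(v_\mu^+)^2=\eta(\mu|x-x_0|)^2\,[a(v^-)^2+b(v^+)^2]$, which equals $a(v^-)^2+b(v^+)^2$ outside $B_{1/\mu}(x_0)$ and lies in $[0,\,a(v^-)^2+b(v^+)^2]$ on it, so \eqref{59} follows with $c_{28}=\max\set{a,b}\,C_0^2\,\O(1)$.

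For the last assertion, $2^\ast=4$ when $N=4$, hence $F(u)=\frac{1}{4}\int_\Omega u^4\,dx$, and $(tv_\mu)^\pm=t\,v_\mu^\pm$ for $t\ge0$, so $E(tv_\mu)=\half\,t^2\int_\Omega\abs{\nabla v_\mu}^2\,dx-\frac{1}{2}\,t^2\int_\Omega[a(v_\mu^-)^2+b(v_\mu^+)^2]\,dx-\frac{1}{4}\,t^4\int_\Omega v_\mu^4\,dx$. Substituting \eqref{57}, \eqref{59}, and the $L^4$ estimate, and using $\int_\Omega\abs{\nabla v}^2\,dx-\int_\Omega[a(v^-)^2+b(v^+)^2]\,dx=I(v,a,b)$, gives $E(tv_\mu)\le\half(I(v,a,b)+c_{26}\,\mu^{-2}+c_{28}\,\mu^{-4})\,t^2-\frac{1}{4}(\int_\Omega v^4\,dx-c_{27}\,\mu^{-4})\,t^4$; since $\mu\ge\mu_0$ one absorbs $c_{28}\,\mu^{-4}\le c_{28}\mu_0^{-2}\,\mu^{-2}$ into the first error term with $c_{29}:=c_{26}+c_{28}\mu_0^{-2}$, which is the stated bound. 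I do not expect a genuine obstacle here: the one substantive point is the uniform $C^2$-bound $C_0$ on $N_{l-1}\cap S$ — this is exactly what makes all constants independent of $v$ — after which the bookkeeping only requires noting that each error coming from the cutoff is at worst $\O(\mu^{-2})$, the lone factor of $\mu$ arising from differentiating $\eta(\mu|x-x_0|)$ and the annulus contributing $\O(\mu^{-4})$. Note that, unlike the $N\ge5$ counterpart, no use is made here of the normalization $a\le b$.
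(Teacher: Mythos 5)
Your proposal is correct and follows the same route as the paper: the key point in both is the uniform $C^1$/$C^2$ bound on $N_{l-1}\cap S$ restricted to $\closure{B_{1/\mu_0}(x_0)}$, obtained from finite dimensionality and interior regularity of eigenfunctions, followed by pointwise estimates over the shrinking ball $B_{1/\mu}(x_0)$ whose volume is $\O(\mu^{-4})$. The paper proves \eqref{56} in detail and states that \eqref{57}--\eqref{59} are similar; your write-up simply supplies those details explicitly (the cross-term Cauchy--Schwarz in \eqref{57}, the inclusion $v_\mu^\pm=\eta\,v^\pm$ for \eqref{59}), and the final absorption of $c_{28}\mu^{-4}$ into the $\mu^{-2}$ term is sound.
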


\begin{proof}
We have
\[
\nabla v_\mu(x) = \eta(\mu\, |x - x_0|)\, \nabla v(x) + \mu\, \eta'(\mu\, |x - x_0|)\, \frac{x - x_0}{|x - x_0|}\, v(x)
\]
and hence $|\nabla (v_\mu - v)| \le |\nabla v| + 5 \mu\, |v|$. Since $v_\mu = v$ outside $B_{1/\mu}(x_0)$, this gives
\[
\int_\Omega |\nabla (v_\mu - v)|^2\, dx \le \int_{B_{1/\mu}(x_0)} (|\nabla v| + 5 \mu\, |v|)^2\, dx,
\]
and \eqref{56} follows from this since the restrictions of functions in $N_{l-1}$ to $\closure{B_{1/\mu_0}(x_0)}$ form a finite dimensional subspace of $C^1(\closure{B_{1/\mu_0}(x_0)})$ by the interior regularity of eigenfunctions. Proofs of \eqref{57}--\eqref{59} are similar.
\end{proof}

\begin{lemma} \label{Lemma 8}
We have
\begin{equation} \label{63}
\sup_{v \in N_{l-1} \cap S,\, s, t \ge 0}\, E(tv_\mu + su_{\eps,\,\mu}) < \frac{1}{4}\, S_4^2
\end{equation}
for all sufficiently small $\eps > 0$ and sufficiently large $\mu \ge \mu_0$.
\end{lemma}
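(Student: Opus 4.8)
The plan is to split the functional $E(tv_\mu + su_{\eps,\,\mu})$ into the part coming from $tv_\mu$ and the part coming from $su_{\eps,\,\mu}$, estimate the cross terms, and then maximize separately in $t$ and in $s$. The key point is that $v_\mu$ and $u_{\eps,\,\mu}$ have \emph{disjoint supports} once $\mu$ is large, since $v_\mu$ vanishes on $B_{3/(4\mu)}(x_0)$ while $u_{\eps,\,\mu}$ is supported in $\closure{B_{1/(2\mu)}(x_0)}$. So all the cross terms in $\int |\nabla(tv_\mu+su_{\eps,\,\mu})|^2$, in $\int |tv_\mu+su_{\eps,\,\mu}|^4$, and in $\int [a((\cdot)^-)^2+b((\cdot)^+)^2]$ \emph{vanish identically}, and
\[
E(tv_\mu + su_{\eps,\,\mu}) = \Big[\half\, I(v_\mu,a,b)\, t^2 - \tfrac14 \textstyle\int_\Omega v_\mu^4\, dx\; t^4\Big] + \Big[\half \textstyle\int_\Omega |\nabla u_{\eps,\,\mu}|^2 dx\; s^2 - \tfrac12 \textstyle\int_\Omega[a(u_{\eps,\,\mu}^-)^2+b(u_{\eps,\,\mu}^+)^2]dx\; s^2 - \tfrac14 \textstyle\int_\Omega u_{\eps,\,\mu}^4 dx\; s^4\Big].
\]

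For the first bracket, I would use Lemma \ref{Lemma 7}: on $K = N_{l-1}\cap S$ we have $I(v,a,b)\le 0$ by \eqref{51}, and (as in Lemma \ref{Lemma 6}) the H\"older inequality applied to $1 \le \int[a(v^-)^2+b(v^+)^2]\le b\vol{\Omega}^{2/N}(\int v^4)^{2/4}$ gives $\inf_{v\in K}\int_\Omega v^4\,dx =: d_0 > 0$. Hence the first bracket is $\le \half(c_{29}\mu^{-2})\,t^2 - \tfrac14(d_0 - c_{27}\mu^{-4})\,t^4$, whose maximum over $t\ge 0$ is $\O(\mu^{-4})$ for $\mu$ large. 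For the second bracket, I would combine \eqref{60}, the $N=4$ version of \eqref{45}, and \eqref{62} to write the coefficient of $s^2$ as $\half(S_4^2 - c_3\,\eps^2|\log(\mu\eps)| + c_1(\mu\eps)^2 + c_4\,\eps^2)$ and the coefficient of $s^4$ as $\tfrac14(S_4^2 - c_2(\mu\eps)^4)$; maximizing the resulting expression $\half As^2-\tfrac14 Bs^4$ over $s\ge0$ yields $\tfrac14 A^2/B$, i.e.
\[
\frac14\,\frac{\big(S_4^2 - c_3\,\eps^2|\log(\mu\eps)| + c_1(\mu\eps)^2 + c_4\,\eps^2\big)^2}{S_4^2 - c_2(\mu\eps)^4}.
\]

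Finally I would choose the scaling $\mu = \eps^{-\gamma}$ with $\gamma\in(0,1)$ (so $\mu\eps = \eps^{1-\gamma}\to 0$ and $|\log(\mu\eps)| = (1-\gamma)|\log\eps|\to\infty$): then $(\mu\eps)^2=\eps^{2(1-\gamma)}=\o(\eps^2|\log(\mu\eps)|)$, so the dominant correction in the numerator is $-c_3(1-\gamma)\eps^2|\log\eps|$, which makes the second bracket strictly less than $\tfrac14 S_4^2$ by an amount of order $\eps^2|\log\eps|$, while the first bracket is only $\O(\mu^{-4})=\O(\eps^{4\gamma})$; choosing $\gamma$ with $4\gamma > 2$, e.g. any $\gamma\in(\tfrac12,1)$, makes the first bracket negligible compared with the gain, giving the strict inequality \eqref{63} for all sufficiently small $\eps$ (and correspondingly large $\mu=\eps^{-\gamma}\ge\mu_0$). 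The main obstacle is bookkeeping: one must verify that, at the chosen scaling, the \emph{logarithmic} term $\eps^2|\log(\mu\eps)|$ genuinely dominates all the positive error terms $(\mu\eps)^2$, $\eps^2$ and $\mu^{-4}$ simultaneously — this is exactly why $N=4$ forces the $|\log|$ estimate \eqref{62} rather than the pure power estimate available for $N\ge5$, and why the argument must be run separately here.
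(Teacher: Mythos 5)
Your disjoint-support decomposition and your analysis of the $s$-bracket are fine, but the scaling $\mu = \eps^{-\gamma}$ with $\gamma\in(\tfrac12,1)$ does not do what you claim, and this is a genuine gap. With that scaling, $(\mu\eps)^2 = \eps^{2(1-\gamma)}$ while $\eps^2\,|\log(\mu\eps)| = (1-\gamma)\,\eps^2\,|\log\eps|$. Since $2(1-\gamma) < 1 < 2$, in fact
\[
\frac{(\mu\eps)^2}{\eps^2\,|\log(\mu\eps)|} = \frac{\eps^{-2\gamma}}{(1-\gamma)\,|\log\eps|} \longrightarrow \infty \quad\text{as } \eps\to 0^+,
\]
so the positive error $(\mu\eps)^2$ \emph{dominates} the favourable $-c_3\,\eps^2|\log(\mu\eps)|$ term, which is exactly the opposite of what you assert. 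Consequently the numerator $A$ of your maximized $s$-bracket exceeds $S_4^2$, and the argument breaks. More precisely, no choice of $\gamma>0$ works: one would need $\eps^{-2\gamma} = \o(|\log\eps|)$, which forces $\gamma\le 0$. This is exactly the $N=4$ obstruction you flagged at the end of your write-up, but it defeats your own scaling rather than merely explaining why the proof is delicate.

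The paper resolves this by \emph{not} coupling $\mu$ to $\eps$. Two ingredients replace your $\O(\mu^{-4})$ bound on the $t$-bracket: the uniform strict bound $I(v,a,b) \le 1 - a/\lambda_{l-1} < 0$ for $v\in N_{l-1}\cap S$ (not just $I(v,a,b)\le 0$), and the uniform lower bound $\inf_{v\in N_{l-1}\cap S}\int_\Omega v^4\,dx > 0$, which together with Lemma \ref{Lemma 7} give $E(tv_\mu) \le 0$ for \emph{all} $t\ge 0$ once $\mu$ is fixed sufficiently large. The $t$-bracket then contributes nothing, and one is free to hold $\mu$ fixed and send $\eps\to 0$. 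With $\mu$ fixed, $(\mu\eps)^2 = \O(\eps^2) = \o(\eps^2|\log\eps|)$, so the logarithmic gain genuinely wins and the $s$-bracket is $< \tfrac14 S_4^2$ for small $\eps$. In short: keep the strict negativity of $I$ on $N_{l-1}\cap S$ instead of throwing it away, use it to kill the $t$-bracket entirely at fixed large $\mu$, and only then let $\eps\to 0$.
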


\begin{proof}
For $v \in N_{l-1} \cap S$ and $s, t \ge 0$,
\begin{equation} \label{61}
E(tv_\mu + su_{\eps,\,\mu}) = E(tv_\mu) + E(su_{\eps,\,\mu})
\end{equation}
since $v_\mu$ and $u_{\eps,\,\mu}$ have disjoint supports. Since $a \le b$ and $(a,b) \in Q_l$,
\[
I(v,a,b) = 1 - \int_\Omega \left[a\, (v^-)^2 + b\, (v^+)^2\right] dx \le 1 - a \int_\Omega v^2\, dx \le 1 - \frac{a}{\lambda_{l-1}} < 0.
\]
Moreover,
\[
\frac{1}{\lambda_{l-1}} \le \int_\Omega v^2\, dx \le \vol{\Omega}^{1/2} \left(\int_\Omega v^4\, dx\right)^{1/2}
\]
and hence
\[
\inf_{v \in N_{l-1} \cap S}\, \int_\Omega v^4\, dx > 0.
\]
So it follows from Lemma \ref{Lemma 7} that $E(tv_\mu) \le 0$ for all sufficiently large $\mu \ge \mu_0$. Then \eqref{61} together with \eqref{60}--\eqref{62} gives for all sufficiently small $\eps > 0$,
\[
E(tv_\mu + su_{\eps,\,\mu}) \le E(su_{\eps,\,\mu}) \le \left[\half \left(1 - c_{30}\, \eps^2 \abs{\log \eps}\right) s^2 - \frac{1}{4} \left(1 - c_{31}\, \eps^4\right) s^4\right] S_4^2
\]
for some constants $c_{30}, c_{31} > 0$ depending on $\mu$. Maximizing the right-hand side over all $s \ge 0$ then gives
\[
E(tv_\mu + su_{\eps,\,\mu}) \le \frac{1}{4}\, \frac{\left(1 - c_{30}\, \eps^2 \abs{\log \eps}\right)^2}{1 - c_{31}\, \eps^4}\, S_4^2,
\]
from which the desired conclusion follows.
\end{proof}

We are now ready to prove Theorem \ref{Theorem 1} for $N = 4$.

\begin{proof}[Proof of Theorem \ref{Theorem 1} for\! $N = 4$]
As in the proof of Theorem \ref{Theorem 7}, if $\delta \in (0,1 - \max \set{a,b}\!/\!\lambda_{l+1})$ and $\rho > 0$ are sufficiently small,
\begin{equation} \label{10031}
\inf_{u \in A}\, E(u) > 0,
\end{equation}
where $A = \set{\theta(w,a/(1 - \delta),b/(1 - \delta)) + w : w \in M_{l-1} \cap S_\rho}$. We apply Theorem \ref{Theorem 3} taking $M = M_{l-1}$, $N = N_{l-1}$, $\theta = \theta(\cdot,a/(1 - \delta),b/(1 - \delta))$, $T : N_{l-1} \to H^1_0(\Omega)$ to be the bounded linear map given by
\[
Tv = v_\mu, \quad v \in N_{l-1},
\]
and $e = u_{\eps,\,\mu}$, with $\eps > 0$ sufficiently small and $\mu \ge \mu_0$ sufficiently large. Since $u_{\eps,\,\mu}$ and functions in $T(N_{l-1})$ have disjoint supports, $u_{\eps,\,\mu} \in H^1_0(\Omega) \setminus T(N_{l-1})$. We have
\[
\norm{I - T} = \sup_{v \in N_{l-1} \cap S} \left(\int_\Omega |\nabla (v_\mu - v)|^2\, dx\right)^{1/2} \to 0 \text{ as } \mu \to \infty
\]
by \eqref{56}, where $I$ is the identity map on $N_{l-1}$. If $\mu \ge \mu_0$ is sufficiently large, $E(tv_\mu) \le 0$ for all $v \in N_{l-1} \cap S$ and $t \ge 0$ as in the proof of Lemma \ref{Lemma 8}, so
\[
\sup_{u \in T(N_{l-1})}\, E(u) = 0.
\]
Together with \eqref{10031}, this gives the first inequality in \eqref{7}. The second inequality also holds by Lemma \ref{Lemma 8}, so \eqref{8} together with \eqref{10031} and \eqref{63} gives
\[
0 < c := \inf_{h \in \widetilde{H}}\, \sup_{u \in h(Q)}\, E(u) < \frac{1}{4}\, S_4^2,
\]
where $\widetilde{H} = \{h \in H : \restr{h}{T(N_{l-1})} = \id\}$. If $E$ satisfies the \PS{c} condition, then Theorem \ref{Theorem 3} gives a critical point of $E$ at the level $c$, which is nontrivial since $c > 0$. If, on the other hand, $E$ does not satisfy the \PS{c} condition, then $E$ has a \PS{c} sequence without a convergent subsequence, which then has a subsequence that converges weakly to a nontrivial critical point of $E$ (see Gazzola and Ruf \cite[Lemma 1]{MR1441856}).
\end{proof}

\section{Proofs of Theorems \ref{Theorem 9} and \ref{Theorem 10}} \label{T9&10}

In this section we prove Theorems \ref{Theorem 9} and \ref{Theorem 10} using Theorems \ref{Theorem 7} and \ref{Theorem 8}, respectively. As in the last section, we assume without loss of generality that $a \le b$. Problem \eqref{64} fits into the abstract setting of Sections \ref{DF spectrum} and \ref{Abs} as before. The potential
\[
F(u) = \half \int_\Omega \big(e^{u^2} - 1 - u^2\big)\, dx, \quad u \in H^1_0(\Omega)
\]
clearly satisfies $(F_1)$ and $(F_2)$. Moreover, the associated variational functional
\[
E(u) = \half \int_\Omega |\nabla u|^2\, dx - \half \int_\Omega \left[a\, (u^-)^2 + b\, (u^+)^2\right] dx - \half \int_\Omega \big(e^{u^2} - 1 - u^2\big)\, dx, \quad u \in H^1_0(\Omega)
\]
satisfies $(F_3)$ with
\[
c^\ast = 2 \pi
\]
(see de Figueiredo et al. \cite[Proposition 2.1]{MR1386960}).

Fix $x_0 \in \Omega$ and $0 < d_0 < \dist{x_0}{\bdry{\Omega}}$. Let
\[
\omega_{j,d}(x) = \frac{1}{\sqrt{2 \pi}} \begin{cases}
\sqrt{\log j} & \text{if } |x - x_0| \le d/j\\[7.5pt]
\dfrac{\log\, (d/|x - x_0|)}{\sqrt{\log j}} & \text{if } d/j < |x - x_0| < d, \qquad j \ge 2,\, 0 < d \le d_0\\[7.5pt]
0 & \text{if } |x - x_0| \ge d
\end{cases}
\]
be the Moser sequence concentrating at $x_0$. Note that $\omega_{j,d} \in H^1_0(\Omega)$ with $\norm{\omega_{j,d}} = 1$. We will apply Theorems \ref{Theorem 7} and \ref{Theorem 8} taking $e = \omega_{j,d}$ with $j \ge 2$ sufficiently large and $0 < d \le d_0$ sufficiently small. We have the estimates
\begin{gather}
\label{10037} \int_\Omega |\nabla \omega_{j,d}|\, dx \le c_{32}\, \frac{d}{\sqrt{\log j}},\\[15pt]
\label{10038} \int_\Omega \omega_{j,d}\, dx \le c_{33}\, \frac{d^2}{\sqrt{\log j}},\\[15pt]
\label{10035} \int_\Omega \omega_{j,d}^2\, dx \ge c_{34}\, \frac{d^2}{\log j}
\end{gather}
for some constants $c_{32}, \dots, c_{34} > 0$. Let $S = \set{u \in H^1_0(\Omega) : \norm{u} = 1}$.

\begin{lemma} \label{Lemma 50}
Let $K$ be a subset of $S \cap C^2(\Omega)$ such that
\begin{equation} \label{10044}
\sup_{u \in K}\, \norm[C^2(\closure{B_{d_0}(x_0)})]{u} < \infty.
\end{equation}
Then there exist constants $c_{35}, \dots, c_{37} > 0$ such that for all $j \ge 2$, $0 < d \le d_0$, $u \in K$, and $s, t \ge 0$,
\begin{equation} \label{10039}
\int_\Omega |\nabla (tu + s\, \omega_{j,d})|^2\, dx \le \left(1 + c_{35}\, \frac{d^2}{\sqrt{\log j}}\right)\! \left(t^2 + s^2\right),
\end{equation}
\begin{multline} \label{10041}
\int_\Omega \left[a \left((tu + s\, \omega_{j,d})^-\right)^2 + b \left((tu + s\, \omega_{j,d})^+\right)^2\right] dx \ge \bigg(\!\int_\Omega \left[a\, (u^-)^2 + b\, (u^+)^2\right] dx - c_{36}\, d^2\!\bigg)\, t^2\\[7.5pt]
+ c_{37}\, \frac{d^2}{\log j}\, s^2.
\end{multline}
In particular,
\[
I(tu + s\, \omega_{j,d},a,b) \le \Big(I(u,a,b) + c_{38}\, d^2\Big)\, t^2 + \left(1 + c_{35}\, \frac{d^2}{\sqrt{\log j}}\right) s^2
\]
for some constant $c_{38} > 0$.
\end{lemma}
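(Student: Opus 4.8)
The plan is to follow the scheme of Lemma \ref{Lemma 5}, adapting each ingredient to the two–dimensional Moser–function setting. Throughout, every constant is meant to be uniform over $K$: this is legitimate because \eqref{10044} bounds $u$ and $\Delta u$ in $L^\infty(\closure{B_{d_0}(x_0)})$ for $u \in K$. I would establish \eqref{10039} first. Since $\norm[D]{u} = 1$ for $u \in S$ and $\norm{\omega_{j,d}} = 1$, expanding the square gives $\int_\Omega |\nabla(tu + s\,\omega_{j,d})|^2\,dx = t^2 + s^2 + 2ts\int_\Omega \nabla u\cdot\nabla\omega_{j,d}\,dx$. Because $\omega_{j,d}$ is supported in $B_d(x_0)\subset\subset\Omega$, vanishes on $\bdry{B_d(x_0)}$ and lies in $H^1_0(B_d(x_0))$, while $u$ is $C^2$ on $\closure{B_{d_0}(x_0)}\supset\closure{B_d(x_0)}$, integration by parts onto $u$ yields $\int_\Omega \nabla u\cdot\nabla\omega_{j,d}\,dx = -\int_{B_d(x_0)}\omega_{j,d}\,\Delta u\,dx$, so that $\bigl|\int_\Omega \nabla u\cdot\nabla\omega_{j,d}\,dx\bigr| \le \|\Delta u\|_{L^\infty(\closure{B_{d_0}(x_0)})}\int_\Omega\omega_{j,d}\,dx \le c\,d^2/\sqrt{\log j}$ by \eqref{10044} and \eqref{10038}. (It is this integration by parts, rather than the cruder bound coming from \eqref{10037}, that produces the correct power $d^2$ instead of $d$.) Then $2ts \le t^2 + s^2$ gives \eqref{10039}.

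For \eqref{10041} I would split $\Omega$ into $B_d(x_0)$, where $\omega_{j,d}$ is supported, and its complement, where $tu + s\,\omega_{j,d} = tu$, so that the complement contributes exactly $t^2\int_{\Omega\setminus B_d(x_0)}[a(u^-)^2+b(u^+)^2]\,dx$. On $B_d(x_0)$, with $r = tu + s\,\omega_{j,d}$, I use $a(r^-)^2 + b(r^+)^2 \ge a\,r^2$ (valid since $a\le b$) and $a(tu^-)^2 + b(tu^+)^2 \le b\,t^2u^2$ to obtain $\int_{B_d(x_0)}[a(r^-)^2+b(r^+)^2]\,dx \ge t^2\int_{B_d(x_0)}[a(u^-)^2+b(u^+)^2]\,dx + \int_{B_d(x_0)}(a\,r^2 - b\,t^2u^2)\,dx$, and then expand $a\,r^2 - b\,t^2u^2 = -(b-a)\,t^2u^2 + 2ats\,u\,\omega_{j,d} + a\,s^2\,\omega_{j,d}^2$. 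The first term is $\ge -c\,d^2t^2$ because $\int_{B_d(x_0)}u^2\,dx \le \|u\|_\infty^2\,|B_d(x_0)| \le c\,d^2$; the last term is $\ge a\,c_{34}\,(d^2/\log j)\,s^2$ by \eqref{10035}; and the cross term is handled by $\bigl|\int_\Omega u\,\omega_{j,d}\,dx\bigr| \le \|u\|_\infty\int_\Omega\omega_{j,d}\,dx \le c\,d^2/\sqrt{\log j}$ together with Young's inequality $2ts \le \varepsilon^{-1}t^2 + \varepsilon s^2$ with a weight $\varepsilon$ of order $(\log j)^{-1/2}$, tuned so that the loss on the $s^2$ term is at most $\tfrac12\,a\,c_{34}\,(d^2/\log j)\,s^2$ while the loss on the $t^2$ term stays bounded by $c\,d^2$ with $c$ independent of $j$. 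Reassembling over $\Omega$ then gives \eqref{10041}, with $c_{37}$ the surviving half of $a\,c_{34}\,(d^2/\log j)\,s^2$ and $c_{36}$ absorbing the two errors of size $d^2$.

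Finally, the ``in particular'' bound follows by inserting \eqref{10039} (from above) and \eqref{10041} (from below) into $I(tu + s\,\omega_{j,d},a,b) = \norm[D]{tu + s\,\omega_{j,d}}^2 - \int_\Omega[a((tu+s\omega_{j,d})^-)^2 + b((tu+s\omega_{j,d})^+)^2]\,dx$, using $1 - \int_\Omega[a(u^-)^2+b(u^+)^2]\,dx = I(u,a,b)$ since $\norm[D]{u} = 1$, absorbing $c_{35}\,d^2/\sqrt{\log j} \le (c_{35}/\sqrt{\log 2})\,d^2$ (as $j\ge 2$) into a new constant $c_{38}$, and simply discarding the favorable term $-c_{37}\,(d^2/\log j)\,s^2$. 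The one genuinely delicate step is the cross term $2ats\int_{B_d(x_0)}u\,\omega_{j,d}\,dx$ in \eqref{10041}: a priori it is of size $d^2/\sqrt{\log j}$, which dominates the only positive contribution $\int_\Omega\omega_{j,d}^2\,dx \sim d^2/\log j$, so the crux is to split it via Young's inequality with a $j$-dependent weight that lets only a fixed fraction spill onto the $s^2$ term while the overflow onto $t^2$ remains a $j$-uniform error of order $d^2$; making these powers of $\log j$ line up is essentially the whole content of the lemma, and everything else is bookkeeping with the uniform bounds furnished by \eqref{10044}.
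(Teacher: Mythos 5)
Your proposal is correct and follows essentially the same route as the paper's proof: for \eqref{10039} expand the square and integrate $\nabla u\cdot\nabla\omega_{j,d}$ by parts to exploit \eqref{10038}, and for \eqref{10041} split the quadratic form over $B_d(x_0)$ and its complement, pass to $a(tu+s\omega_{j,d})^2-b(tu)^2$ on $B_d(x_0)$ using $a\le b$, and control the cross term $2ats\int_{B_d(x_0)}u\,\omega_{j,d}\,dx$ via a weighted Young inequality. The only difference is presentational: your explicit choice of a Young weight of order $(\log j)^{-1/2}$ spells out the one step the paper leaves implicit in passing from the expanded expression to the bound $-c_{36}d^2t^2+c_{37}(d^2/\log j)s^2$.
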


\begin{proof}
We have
\[
\int_\Omega |\nabla (tu + s\, \omega_{j,d})|^2\, dx = t^2 + 2ts \int_\Omega \nabla u \cdot \nabla \omega_{j,d}\, dx + s^2
\]
since $u, \omega_{j,d} \in S$. Since $\omega_{j,d} = 0$ on $\bdry{\Omega}$,
\[
\int_\Omega \nabla u \cdot \nabla \omega_{j,d}\, dx = - \int_\Omega \omega_{j,d}\, \Delta u\, dx \le c_{35}\, \frac{d^2}{\sqrt{\log j}}
\]
for some constant $c_{35} > 0$ by \eqref{10044} and \eqref{10038}, so \eqref{10039} follows. Since $\omega_{j,d} = 0$ outside $B_d(x_0)$,
\begin{multline*}
\int_\Omega \left[a \left((tu + s\, \omega_{j,d})^-\right)^2 + b \left((tu + s\, \omega_{j,d})^+\right)^2\right] dx = t^2 \int_{\Omega \setminus B_d(x_0)} \left[a\, (u^-)^2 + b\, (u^+)^2\right] dx\\[7.5pt]
+ \int_{B_d(x_0)} \left[a \left((tu + s\, \omega_{j,d})^-\right)^2 + b \left((tu + s\, \omega_{j,d})^+\right)^2\right] dx = t^2 \int_\Omega \left[a\, (u^-)^2 + b\, (u^+)^2\right] dx\\[7.5pt]
+ \int_{B_d(x_0)} \left[a \left((tu + s\, \omega_{j,d})^-\right)^2 + b \left((tu + s\, \omega_{j,d})^+\right)^2 - a\, (tu^-)^2 - b\, (tu^+)^2\right] dx.
\end{multline*}
Since $a \le b$, the last integral is greater than or equal to
\begin{multline*}
\int_{B_d(x_0)} \left[a\, (tu + s\, \omega_{j,d})^2 - b\, (tu)^2\right] dx = - (b - a)\, t^2 \int_{B_d(x_0)} u^2\, dx + 2ats \int_{B_d(x_0)} u\, \omega_{j,d}\, dx\\[7.5pt]
+ as^2 \int_{B_d(x_0)} \omega_{j,d}^2\, dx \ge - c_{36}\, d^2\, t^2 + c_{37}\, \frac{d^2}{\log j}\, s^2
\end{multline*}
for some constants $c_{36}, c_{37} > 0$ by \eqref{10044}, \eqref{10038}, and \eqref{10035}, so \eqref{10041} also follows.
\end{proof}

Now we take $d = (\log j)^{-1/4}$. Then it follows from Lemma \ref{Lemma 50} that for all large $j \ge 2$, $u \in K$, and $s, t \ge 0$,
\begin{equation} \label{10055}
I(tu + s\, \omega_{j,(\log j)^{-1/4}},a,b) \le \left(I(u,a,b) + \frac{c_{38}}{\sqrt{\log j}}\right) t^2 + \left(1 + \frac{c_{35}}{\log j}\right) s^2.
\end{equation}
We have the following lemmas.

\begin{lemma} \label{Lemma 70}
Let $K$ be a subset of $S \cap C^2(\Omega)$ such that \eqref{10044} holds and
\begin{equation} \label{10049}
I(u,a,b) \le 0 \quad \forall u \in K.
\end{equation}
Then
\[
E(tu) \le 0 \quad \forall u \in K,\, t \ge 0,
\]
and for all sufficiently large $j \ge 2$,
\begin{equation} \label{10054}
\sup_{u \in K,\, s, t \ge 0}\, E(tu + s\, \omega_{j,(\log j)^{-1/4}}) < \infty
\end{equation}
and
\begin{equation} \label{10053}
E(tu + s\, \omega_{j,(\log j)^{-1/4}}) \to - \infty \text{ as } s^2 + t^2 \to \infty, \text{ uniformly in } u \in K.
\end{equation}
\end{lemma}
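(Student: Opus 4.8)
The plan is to derive everything from the single estimate \eqref{10055} together with the exponential structure of $F$ and the coercivity properties of $I$ on the two relevant subspaces. First I would dispose of the claim $E(tu) \le 0$ for $u \in K$, $t \ge 0$: since $F \ge 0$ by $(F_2)$ and $E(tu) = \tfrac12\, I(tu,a,b) - F(tu) = \tfrac{t^2}{2}\, I(u,a,b) - F(tu)$ by positive homogeneity of $u \mapsto I(u,a,b)$ in the obvious quadratic sense, the hypothesis \eqref{10049} forces $E(tu) \le 0$ immediately.

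Next, for the two-parameter estimates \eqref{10054} and \eqref{10053}, I would write, for $u \in K$ and $s,t \ge 0$,
\[
E(tu + s\, \omega_{j,(\log j)^{-1/4}}) = \half\, I(tu + s\, \omega_{j,(\log j)^{-1/4}},a,b) - F(tu + s\, \omega_{j,(\log j)^{-1/4}}),
\]
and bound the first term from above using \eqref{10055}. Since $I(u,a,b) \le 0$ on $K$, the quadratic part is dominated by $\frac{c_{38}}{\sqrt{\log j}}\, t^2 + (1 + \frac{c_{35}}{\log j})\, s^2$, which for large $j$ behaves like $C(s^2 + t^2)$ with $C$ close to $1$. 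The point is that $F$ grows faster than any quadratic: from $e^{\sigma^2} - 1 - \sigma^2 \ge \tfrac12 \sigma^4$ one gets $F(w) \ge \tfrac14 \int_\Omega w^4\, dx$, and since $u$ ranges over a bounded-below slice of a finite-dimensional (or regularity-controlled) set $K$ with $\int_\Omega |u|^4\, dx$ bounded below — exactly as in Lemma \ref{Lemma 6}, using $\norm{u} = 1$ and \eqref{10049} with $a \le b$ and Hölder to get $\inf_{u\in K}\int_\Omega u^4\,dx > 0$ — one controls $F(tu + s\,\omega_{j,d})$ from below by a superquadratic expression in $(s,t)$. Because $\omega_{j,d}$ and $u$ are not disjointly supported in general, I would instead use a cruder pointwise lower bound: split into the region near $x_0$ and away from it, or simply use $F(tu+s\omega) \ge \tfrac14\int_\Omega (tu+s\omega)^4\,dx$ and expand, discarding cross terms by their sign where possible and absorbing the bad ones via Young's inequality into the dominant $t^4$ and $s^4$ terms. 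This yields a bound of the shape
\[
E(tu + s\, \omega_{j,d}) \le C_j\, (s^2 + t^2) - c\, (s^4 + t^4) + (\text{lower order}),
\]
from which both \eqref{10054} (the right side is bounded above over all $s,t \ge 0$) and \eqref{10053} (it tends to $-\infty$ as $s^2 + t^2 \to \infty$, uniformly in $u \in K$ since all constants are uniform in $u$) follow at once.

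The main obstacle I anticipate is handling the cross terms in $F(tu + s\,\omega_{j,d})$ cleanly, since — unlike in Section \ref{T1&2} where $v_\mu$ and $u_{\eps,\mu}$ had disjoint supports — here $u \in K$ and $\omega_{j,d}$ overlap on $B_d(x_0)$. The resolution is that on $B_d(x_0)$ the function $u \in K$ is uniformly bounded (by \eqref{10044}, $C^2$ and hence $L^\infty$ control near $x_0$), so the contribution of $u$ there is $O(d^N)$-small and the genuinely superquadratic growth comes from $s\,\omega_{j,d}$ where $\omega_{j,d}$ is large, plus from $tu$ on the bulk $\Omega \setminus B_d(x_0)$ where $\int |u|^4$ is bounded below. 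Choosing $d = (\log j)^{-1/4}$ (already fixed above) makes all these error terms vanish as $j \to \infty$, so for $j$ large the quartic terms $-c\,t^4$ and $-c\,s^4$ genuinely dominate and both conclusions hold uniformly in $u \in K$.
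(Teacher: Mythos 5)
Your proof of $E(tu) \le 0$ is exactly the paper's. For the two-parameter bounds \eqref{10054}--\eqref{10053} you go a genuinely different route, and it can be made to work, but there is a subtlety you are glossing over. Both you and the paper start from $e^{\sigma}-1-\sigma \ge \tfrac12\sigma^2$, i.e. $F(w) \ge \tfrac14 \int_\Omega w^4\,dx$. You then propose to \emph{expand} $\int_\Omega (tu+s\omega_{j,d})^4\,dx$ and absorb the cubic cross terms $t^3 s\!\int u^3\omega$, $ts^3\!\int u\omega^3$ by Young into the $t^4$ and $s^4$ terms. The paper instead applies H\"older first, $\int_\Omega v^4\,dx \ge \vol{\Omega}^{-1}\bigl(\int_\Omega v^2\,dx\bigr)^2$, and then estimates the \emph{quadratic} form $\int_\Omega v^2\,dx$ from below using \eqref{10041} with $a=b$, getting $\int_\Omega v^2\,dx \ge (\int u^2 - c/\sqrt{\log j})\,t^2 + c\,(\log j)^{-3/2}\,s^2$ and $\int u^2 \ge 1/b$; squaring this produces a quartic in $(t,s)$ with only nonnegative terms, so no cross-term bookkeeping is needed. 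That is the cleaner move, and what your sketch does not quite confront is that in the direct expansion the coefficient of $s^4$, namely $\int_\Omega\omega_{j,d}^4\,dx$, is \emph{not} bounded below as $j\to\infty$: with $d=(\log j)^{-1/4}$ one computes $\int\omega_{j,d}^4\,dx \sim (\log j)^{-5/2}$, which vanishes. So the picture you describe --- ``the error terms vanish and $-c\,t^4,\ -c\,s^4$ genuinely dominate'' --- is not accurate; the $s^4$ coefficient also vanishes, just at a different rate than the cross-term coefficients ($|\int u^3\omega| \lesssim (\log j)^{-1}$, $|\int u\omega^3| \lesssim (\log j)^{-2}$). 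One can still carry out the absorption with a $j$-dependent weighted Young inequality (the exponent windows are nonempty for large $j$), but this needs to be said and checked; it is not a straightforward ``errors $\to 0$'' argument. The paper's H\"older trick buys you exactly the freedom from this delicacy, and since the statement is for each fixed sufficiently large $j$, you would in any case want the $j$-dependence of your Young weights made explicit. Also note that for \eqref{10053} the paper does not even need the $I$-estimate \eqref{10055} that you cite: it simply discards the (nonpositive) $-\tfrac12\int[a(v^-)^2+b(v^+)^2]$ term and bounds $\tfrac12\int|\nabla v|^2$ by \eqref{10039}.
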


\begin{proof}
For $u \in K$ and $t \ge 0$,
\[
E(tu) = \frac{t^2}{2}\, I(u,a,b) - F(tu) \le 0
\]
by \eqref{10049} and $(F_2)$.

Set $v = tu + s\, \omega_{j,(\log j)^{-1/4}}$. Since $a, b > 0$ and $e^{v^2} \ge 1 + v^2 + v^4/2$,
\begin{equation} \label{10050}
E(v) \le \half \int_\Omega |\nabla v|^2\, dx - \frac{1}{4} \int_\Omega v^4\, dx \le \half \int_\Omega |\nabla v|^2\, dx - \frac{1}{4 \vol{\Omega}} \left(\int_\Omega v^2\, dx\right)^2
\end{equation}
by the H\"{o}lder inequality. Taking $a = b$ in \eqref{10041} gives
\begin{equation} \label{10051}
\int_\Omega v^2\, dx \ge \left(\int_\Omega u^2\, dx - \frac{c_{39}}{\sqrt{\log j}}\right) t^2 + \frac{c_{40}}{(\log j)^{3/2}}\, s^2
\end{equation}
for some constants $c_{39}, c_{40} > 0$. For $u \in K$, \eqref{10049} together with the assumption that $a \le b$ gives
\[
1 \le \int_\Omega \left[a\, (u^-)^2 + b\, (u^+)^2\right] dx \le b \int_\Omega u^2\, dx
\]
and hence
\begin{equation} \label{10052}
\int_\Omega u^2\, dx \ge \frac{1}{b}.
\end{equation}
Combining \eqref{10050}, \eqref{10039}, \eqref{10051}, and \eqref{10052} gives
\[
E(v) \le \half \left(1 + \frac{c_{35}}{\log j}\right)\! \left(t^2 + s^2\right) - \frac{1}{4 \vol{\Omega}} \left[\left(\frac{1}{b} - \frac{c_{39}}{\sqrt{\log j}}\right) t^2 + \frac{c_{40}}{(\log j)^{3/2}}\, s^2\right]^2,
\]
from which \eqref{10054} and \eqref{10053} follow for all sufficiently large $j \ge 2$.
\end{proof}

\begin{lemma} \label{Lemma 60}
Let $K$ be a subset of $S \cap C^2(\Omega)$ such that \eqref{10044} holds and
\begin{equation} \label{10048}
\sup_{u \in K}\, I(u,a,b) < 0.
\end{equation}
Then there exists $j_0 \ge 2$ such that
\[
\sup_{u \in K,\, s, t \ge 0}\, E(tu + s\, \omega_{j_0,(\log j_0)^{-1/4}}) < 2 \pi.
\]
\end{lemma}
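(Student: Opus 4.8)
The plan is to combine the Moser-type estimate \eqref{10053} from Lemma \ref{Lemma 70} (which guarantees that the supremum over $s,t\ge 0$ is actually attained on a bounded set) with a refinement of the computation in Lemma \ref{Lemma 70}, this time exploiting the \emph{strict} negativity in \eqref{10048} rather than just \eqref{10049}. The point is that under \eqref{10048} we have $\sup_{u\in K} I(u,a,b) =: -\kappa < 0$, so that the coefficient of $t^2$ in \eqref{10055} is bounded above by $-\kappa + c_{38}/\sqrt{\log j}$, which is $\le -\kappa/2 < 0$ for all $j$ large. Hence for such $j$ the function $E(tu + s\,\omega_{j,(\log j)^{-1/4}})$ is, in the $t$-variable, dominated by the functional of a single variable $s$ alone, up to a term that is negative.

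More precisely, first I would fix $j_0$ large enough that the conclusions of Lemma \ref{Lemma 70} hold (so the sup over $s,t\ge 0$ is attained and finite) and simultaneously that $c_{38}/\sqrt{\log j_0} \le \kappa/2$. By \eqref{10050} and $(F_2)$, for $v = tu + s\,\omega_{j_0,(\log j_0)^{-1/4}}$ with $u \in K$ we have
\[
E(v) \le \half \int_\Omega |\nabla v|^2\, dx - \half \int_\Omega \big[a\,(v^-)^2 + b\,(v^+)^2\big]\, dx - \frac{1}{4}\int_\Omega v^4\, dx + \half \int_\Omega \big[a\,(v^-)^2 + b\,(v^+)^2\big]\, dx,
\]
so that $E(v) \le \half\, I(v,a,b) + \half\int_\Omega\big[a\,(v^-)^2+b\,(v^+)^2\big]dx - \tfrac14\int_\Omega v^4\,dx$; combining this with \eqref{10055}, \eqref{10041} (to control the quadratic term), \eqref{10052}, and the elementary Hölder bound $\int_\Omega v^4 \ge |\Omega|^{-1}(\int_\Omega v^2)^2$ exactly as in the proof of Lemma \ref{Lemma 70}, one gets, for $u \in K$ and $s,t\ge 0$ and $j_0$ large,
\[
E(v) \le -\frac{\kappa}{4}\, t^2 + \half\left(1 + \frac{c_{35}}{\log j_0}\right) s^2 - \frac{1}{4|\Omega|}\left(\frac{c_{40}}{(\log j_0)^{3/2}}\, s^2 - c_{41}\, t^2\right)^{\!2}
\]
for a suitable constant $c_{41}>0$, where the $-\tfrac{\kappa}{4}t^2$ comes from the strict inequality \eqref{10048}. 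Dropping the $t$-dependent contributions (the $-\tfrac{\kappa}{4}t^2$ term together with the cross terms in the square, which only help once $t$ is controlled by the quartic) one reduces to a one-variable estimate in $s$ of the form $E(v) \le \half(1 + c_{35}/\log j_0)\,s^2 - c_{42}\,s^4/(\log j_0)^3$ on the region where $t$ stays bounded, and by \eqref{10053} the supremum is attained there.

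The heart of the matter is then to show that this one-variable supremum in $s$ is strictly below $2\pi$. This is the classical Moser–Trudinger threshold computation: one must use the \emph{sharper} expansion of $\int_\Omega(e^{v^2}-1-v^2)\,dx$ along the Moser sequence $\omega_{j_0,d}$ rather than the crude lower bound $v^4/2$ used above, since $\tfrac14|\Omega|^{-1}(\int v^2)^2$ is too lossy to beat $2\pi$ on its own. I expect this — reproducing the de Figueiredo–Miyagaki–Ruf type estimate showing $\sup_{s\ge 0} E(s\,\omega_{j,d}) < 2\pi$ for $j$ large with the judicious choice $d = (\log j)^{-1/4}$, and checking that the perturbation by $tu$ (with $u \in K$, $I(u,a,b)<0$ uniformly) does not spoil it because that direction contributes negatively — to be the main obstacle; everything else is bookkeeping via Lemmas \ref{Lemma 50} and \ref{Lemma 70}. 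Once $\sup_{s\ge0} E(s\,\omega_{j_0,d}) < 2\pi$ is established and the $t$-direction is handled by strict concavity coming from \eqref{10048}, the inequality $\sup_{u\in K,\,s,t\ge0} E(tu+s\,\omega_{j_0,(\log j_0)^{-1/4}}) < 2\pi$ follows, and the lemma is proved.
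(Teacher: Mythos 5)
There is a genuine gap. You correctly identify that the crude bound $e^{v^2}\ge 1+v^2+v^4/2$ together with H\"older is too weak to beat the $2\pi$ threshold, and that a sharper use of the Moser sequence is required. But you then explicitly defer that step — ``reproducing the de Figueiredo--Miyagaki--Ruf type estimate showing $\sup_{s\ge 0} E(s\,\omega_{j,d})<2\pi$'' — as ``the main obstacle,'' with ``everything else bookkeeping.'' That deferred step is the entire content of the lemma; leaving it as an expectation means the proof is not actually given. Moreover, it is not at all clear that the reduction to the one-variable problem in $s$ is valid as you describe it: the intermediate display you write has the quartic contribution as $-\tfrac{1}{4|\Omega|}\bigl(\tfrac{c_{40}}{(\log j_0)^{3/2}}s^2 - c_{41}t^2\bigr)^2$, but \eqref{10051} bounds $\int_\Omega v^2\,dx$ from below by a \emph{sum} of nonnegative terms in $t^2$ and $s^2$, not a difference; and ``dropping'' the $t$-dependent pieces requires showing they are pointwise favourable, which is precisely where the delicate coupling between $t$ and $s$ lives and cannot be waved away by appeal to concavity in $t$ alone.

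The paper's proof takes a fundamentally different route that avoids ever computing $\sup_{s\ge 0}E(s\,\omega_{j,d})$ directly. It argues by contradiction: if the conclusion fails for every $j$, Lemma \ref{Lemma 70} guarantees the supremum over $s,t\ge 0$ is attained at some $v_j=t_ju_j+s_j\omega_{j,(\log j)^{-1/4}}$ with $E(v_j)\ge 2\pi$, and one also has the first-order optimality condition $E'(v_j)v_j=0$ along the ray through $v_j$. Feeding $E(v_j)\ge 2\pi$ into \eqref{10055} together with the strict negativity \eqref{10048} gives $4\pi + c_{41}t_j^2 \le (1+c_{35}/\log j)\,s_j^2$, hence $s_j^2\ge 4\pi - c_{42}/\log j$ and $t_j\lesssim s_j$. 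The stationarity identity $E'(v_j)v_j=0$ then yields $\int_\Omega v_j^2 e^{v_j^2}\,dx\le\int_\Omega(|\nabla v_j|^2+v_j^2)\,dx\lesssim s_j^2$, while on the core $|x-x_0|\le d/j$ the pointwise lower bound $|v_j|\ge c_{45}s_j\sqrt{\log j}$ forces the same integral to be $\gtrsim s_j^2\sqrt{\log j}\,j^{[(s_j-c_{44}t_j/\sqrt{\log j})^2-4\pi]/2\pi}$. Comparing forces the exponent to be $O(1/\sqrt{\log j})$, which pins $s_j$ just below $2\sqrt{\pi}$ and $t_j=O(1/\sqrt{\log j})$, and then $s_j^2-4\pi\ge -c_{42}/\log j$ gives a contradictory lower bound on the same exponential. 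The essential ingredient you are missing is this use of \emph{both} the level inequality and the stationarity condition at the purported maximiser — that is what replaces the explicit threshold computation you defer.
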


\begin{proof}
If the conclusion is false, then it follows from Lemma \ref{Lemma 70} that for all $j \ge 2$, there exist $u_j \in K$, $s_j > 0$, and $t_j \ge 0$ such that
\[
E(t_j u_j + s_j\, \omega_{j,(\log j)^{-1/4}}) = \sup_{u \in K,\, s, t \ge 0}\, E(tu + s\, \omega_{j,(\log j)^{-1/4}}) \ge 2 \pi.
\]
Set $v_j = t_j u_j + s_j\, \omega_{j,(\log j)^{-1/4}}$. Then
\begin{equation} \label{4.6}
E(v_j) = \half \int_\Omega |\nabla v_j|^2\, dx - \half \int_\Omega \left[a\, (v_j^-)^2 + b\, (v_j^+)^2\right] dx - \half \int_\Omega \big(e^{v_j^2} - 1 - v_j^2\big)\, dx \ge 2 \pi.
\end{equation}
Moreover, $\tau v_j \in \set{tu + s\, \omega_{j,(\log j)^{-1/4}} : u \in K,\, s, t \ge 0}$ for all $\tau \ge 0$ and $E(\tau v_j)$ attains its maximum at $\tau = 1$, so
\begin{equation} \label{4.7}
\restr{\frac{\partial}{\partial \tau}\, E(\tau v_j)}{\tau = 1} = E'(v_j)\, v_j = \int_\Omega |\nabla v_j|^2\, dx - \int_\Omega \left[a\, (v_j^-)^2 + b\, (v_j^+)^2\right] dx - \int_\Omega \big(e^{v_j^2} - 1\big)\, v_j^2\, dx = 0.
\end{equation}

Since $e^{v_j^2} \ge 1 + v_j^2$, it follows from \eqref{4.6} and \eqref{10055} that
\[
4 \pi \le I(v_j,a,b) \le \left(I(u_j,a,b) + \frac{c_{38}}{\sqrt{\log j}}\right) t_j^2 + \left(1 + \frac{c_{35}}{\log j}\right) s_j^2.
\]
In view of \eqref{10048}, this implies that for all sufficiently large $j \ge 2$,
\begin{equation} \label{10060}
4 \pi + c_{41}\, t_j^2 \le \left(1 + \frac{c_{35}}{\log j}\right) s_j^2
\end{equation}
for some constant $c_{41} > 0$. So
\begin{equation} \label{4.9.3}
s_j^2 - 4 \pi \ge - \frac{c_{42}}{\log j}
\end{equation}
and
\begin{equation} \label{10056}
t_j \le c_{43}\, s_j
\end{equation}
for some constants $c_{42}, c_{43} > 0$.

Since $a, b > 0$, \eqref{4.7} gives
\begin{equation} \label{4.10}
\int_\Omega v_j^2\, e^{v_j^2}\, dx \le \int_\Omega \left(|\nabla v_j|^2 + v_j^2\right) dx.
\end{equation}
For $|x - x_0| \le d/j$ and sufficiently large $j$,
\[
|v_j| \ge s_j\, \omega_{j,(\log j)^{-1/4}} - t_j\, |u_j| \ge \sqrt{\frac{\log j}{2 \pi}} \left(s_j - \frac{c_{44}\, t_j}{\sqrt{\log j}}\right) \ge c_{45}\, s_j\, \sqrt{\log j}
\]
for some constants $c_{44}, c_{45} > 0$ by \eqref{10044} and \eqref{10056}. So
\begin{multline}
\int_\Omega v_j^2\, e^{v_j^2}\, dx \ge c_{45}^2\, s_j^2\, \log j\, e^{(s_j - c_{44}\, t_j/\sqrt{\log j})^2\, \log j/2 \pi}\, \pi \left(\frac{d}{j}\right)^2\\[7.5pt]
= c_{46}\, s_j^2\, \sqrt{\log j}\, j^{[(s_j - c_{44}\, t_j/\sqrt{\log j})^2 - 4 \pi]/2 \pi}
\end{multline}
for some constant $c_{46} > 0$. On the other hand, by the Poincar\'e inequality, \eqref{10039}, and \eqref{10056},
\begin{equation} \label{10061}
\int_\Omega \left(|\nabla v_j|^2 + v_j^2\right) dx \le c_{47}\, s_j^2
\end{equation}
for some constant $c_{47} > 0$. Combining \eqref{4.10}--\eqref{10061} gives
\begin{equation} \label{10063}
j^{[(s_j - c_{44}\, t_j/\sqrt{\log j})^2 - 4 \pi]/2 \pi} \le \frac{c_{48}}{\sqrt{\log j}}
\end{equation}
for some constant $c_{48} > 0$.

For sufficiently large $j$, \eqref{10063} implies that
\[
s_j \le 2\, \sqrt{\pi} + \frac{c_{44}\, t_j}{\sqrt{\log j}},
\]
and combining this with \eqref{10060} gives
\[
t_j \le \frac{c_{49}}{\sqrt{\log j}}
\]
for some constant $c_{49} > 0$. So
\[
s_j\, t_j \le \frac{c_{50}}{\sqrt{\log j}}
\]
for some constant $c_{50} > 0$. This together with \eqref{4.9.3} gives
\[
j^{[(s_j - c_{44}\, t_j/\sqrt{\log j})^2 - 4 \pi]/2 \pi} \ge j^{[s_j^2 - 4 \pi - 2c_{44}\, s_j\, t_j/\sqrt{\log j}]/2 \pi} \ge j^{- c_{51}/\log j} = e^{- c_{51}}
\]
for some constant $c_{51} > 0$, contradicting \eqref{10063}.
\end{proof}

We are now ready to prove Theorems \ref{Theorem 9} and \ref{Theorem 10}.

\begin{proof}[Proof of Theorem \ref{Theorem 9}]
We apply Theorem \ref{Theorem 7} taking $e = \omega_{j,(\log j)^{-1/4}}$ with $j \ge 2$ sufficiently large. Functions $\omega_{j,(\log j)^{-1/4}}$ with different centers $x_0$ and sufficiently large $j$ have disjoint supports and are therefore linearly independent. Since $N_{l-1}$ is finite dimensional, it follows that $x_0 \in \Omega$ can be chosen so that $\omega_{j,(\log j)^{-1/4}} \notin N_{l-1}$.

To verify \eqref{20}, we use Lemma \ref{Lemma 60} with $K = S \cap N_{l-1}$. For $u \in K$,
\[
I(u,a,b) = 1 - \int_\Omega \left[a\, (u^-)^2 + b\, (u^+)^2\right] dx \le 1 - a \int_\Omega u^2\, dx \le 1 - \frac{a}{\lambda_{l-1}} < 0
\]
since $b \ge a > \lambda_{l-1}$, so \eqref{10048} holds. By the interior regularity of eigenfunctions, $N_{l-1} \subset C^2(\Omega)$. So the restrictions of functions in $N_{l-1}$ to $\closure{B_{d_0}(x_0)}$ form a finite dimensional subspace of $C^2(\closure{B_{d_0}(x_0)})$. Since the restrictions of functions in $K$ to $\closure{B_{d_0}(x_0)}$ is a subset of this subspace and is bounded in the $H^1$-norm, \eqref{10044} follows.
\end{proof}

\begin{proof}[Proof of Theorem \ref{Theorem 10}]
We apply Theorem \ref{Theorem 8} taking $e = \omega_{j,(\log j)^{-1/4}}$ with $j \ge 2$ sufficiently large. Gradients $I'(\omega_{j,(\log j)^{-1/4}},a,b)$ with different centers $x_0$ and sufficiently large $j$ have disjoint supports and are therefore linearly independent. By Proposition \ref{Proposition 3}, $I'(v + \tau(v,a,b),a,b) \perp M_l$ for all $v \in N_l$ and hence $\set{I'(v + \tau(v,a,b),a,b) : v \in N_l}$ is a subset of $N_l$. Since $N_l$ is finite dimensional, it follows that $x_0 \in \Omega$ can be chosen so that $\pm \omega_{j,(\log j)^{-1/4}} \notin B := \set{v + \tau(v,a,b) : v \in N_l}$.

To verify \eqref{21}, we use Lemma \ref{Lemma 60} with $K = S \cap B$. To see that \eqref{10044} holds, let $u \in K$. By Proposition \ref{Proposition 3}, $I'(u,a,b) = z_u$ for some $z_u \in N_l$. Then
\begin{equation} \label{10057}
\int_\Omega \nabla u \cdot \nabla \zeta\, dx - \int_\Omega \left(bu^+ - au^-\right) \zeta\, dx = \int_\Omega \nabla z_u \cdot \nabla \zeta\, dx \quad \forall \zeta \in H^1_0(\Omega),
\end{equation}
so $u$ is a weak solution of
\[
\left\{\begin{aligned}
- \Delta u & = bu^+ - au^- - \Delta z_u && \text{in } \Omega\\[10pt]
u & = 0 && \text{on } \bdry{\Omega}.
\end{aligned}\right.
\]
Testing \eqref{10057} with $z_u$ gives
\[
\norm{z_u}^2 \le \norm{u} \norm{z_u} + \left(a\, |u^-|_2 + b\, |u^+|_2\right) \pnorm[2]{z_u},
\]
and since $K \subset S$, this implies that $\set{z_u : u \in K}$ is bounded in $H^1_0(\Omega)$. By the interior regularity of eigenfunctions, $N_l \subset C^{2,\alpha}(\Omega)$. So the restrictions of functions in $N_l$ to $\closure{B_{d_0}(x_0)}$ form a finite dimensional subspace of $C^{2,\alpha}(\closure{B_{d_0}(x_0)})$. Since the restrictions of functions in the set $\set{z_u : u \in K}$ to $\closure{B_{d_0}(x_0)}$ is a subset of this subspace and is bounded in the $H^1$-norm, it follows that this set is also bounded in $C^{2,\alpha}(\closure{B_{d_0}(x_0)})$. Now it follows from standard arguments in elliptic regularity theory that $K$ is bounded in $C^{2,\alpha}(\closure{B_{d_0}(x_0)})$.

It remains to show that \eqref{10048} holds. Let $u = v + \tau(v,a,b) \in K$, where $v \in N_l$. Then
\[
1 = \norm{u}^2 = \norm{v}^2 + \norm{\tau(v,a,b)}^2 \le c_{52} \norm{v}^2
\]
for some constant $c_{52} > 0$ since $\tau$ is positive homogeneous (see \cite[Proposition 4.3.1]{MR3012848}), so
\begin{equation} \label{71}
\norm{v}^2 \ge \frac{1}{c_{52}}.
\end{equation}
Since $b > \mu_l(a)$ and $\mu_l$ is continuous,
\[
b/(1 + \delta) \ge \mu_l(a/(1 + \delta))
\]
if $\delta \in (0,\min \set{a,b}/\lambda_{l-1} - 1)$ is sufficiently small. Then
\[
m_l(a/(1 + \delta),b/(1 + \delta)) \le 0
\]
(see \eqref{30}) and hence
\[
I(z,a/(1 + \delta),b/(1 + \delta)) \le 0,
\]
where $z = v + \tau(v,a/(1 + \delta),b/(1 + \delta))$ (see \eqref{28}). By Proposition \ref{Proposition 3}, then
\begin{equation} \label{72}
I(u,a,b) \le I(z,a,b) = (1 + \delta)\, I(z,a/(1 + \delta),b/(1 + \delta)) - \delta \norm{z}^2 \le - \delta \norm{z}^2.
\end{equation}
Since
\[
\norm{z}^2 = \norm{v}^2 + \norm{\tau(v,a/(1 + \delta),b/(1 + \delta))}^2 \ge \norm{v}^2,
\]
it follows from \eqref{72} and \eqref{71} that
\[
I(u,a,b) \le - \frac{\delta}{c_{52}},
\]
so \eqref{10048} holds.
\end{proof}

\bigskip

\noindent{\bf Acknowledgement}

\medskip

\noindent The second author was partially supported by MIUR--PRIN project ``Qualitative and quantitative aspects of nonlinear PDEs'' (2017JPCAPN\underline{\ }005).

{\small \def\cdprime{$''$}
}

\end{document}